\documentclass[english,a4paper]{amsart}
\usepackage{amssymb}
\usepackage{amsfonts}
\usepackage{amscd}
\usepackage{amsthm}
\usepackage[all]{xypic}
\usepackage{color}
\usepackage{enumerate}
\usepackage{todonotes}
\usepackage{enumitem}

\usepackage[T1]{fontenc}
\RequirePackage[colorlinks=true, breaklinks=true, urlcolor= black, linkcolor= black, citecolor= black, bookmarksopen=true,linktocpage=true,plainpages=false,pdfpagelabels,unicode]{hyperref}

\usepackage{cleveref}

\CompileMatrices

\def\deg{{\rm deg}}

\def\11{{\mathbf 1}}
\def\AA{{\mathbb A}}

\def\PP{{\mathbb P}}
\def\QQ{{\mathbb Q}}

\def\ZZ{{\mathbb Z}}

\mathchardef\alphag="7C0B \mathchardef\betag="7C0C
\mathchardef\gammag="7C0D \mathchardef\deltag="7C0E
\mathchardef\varepsilong="7C22 \mathchardef\varphig="7C27
\mathchardef\psig="7C20 \mathchardef\zetag="7C10
\mathchardef\epsilong="7C0F \mathchardef\rhog="7C1A
\mathchardef\taug="7C1C \mathchardef\upsilong="7C1D
\mathchardef\iotag="7C13 \mathchardef\thetag="7C12
\mathchardef\pig="7C19 \mathchardef\sigmag="7C1B
\mathchardef\etag="7C11 \mathchardef\omegag="7C21
\mathchardef\kappag="7C14 \mathchardef\lambdag="7C15
\mathchardef\mug="7C16 \mathchardef\xig="7C18
\mathchardef\chig="7C1F \mathchardef\nug="7C17
\mathchardef\varthetag="7C23 \mathchardef\varpig="7C24
\mathchardef\varrhog="7C25 \mathchardef\varsigmag="7C26
\mathchardef\Omegag="7C0A \mathchardef\Thetag="7C02
\mathchardef\Sigmag="7C06 \mathchardef\Deltag="7C01
\mathchardef\Phig="7C08 \mathchardef\Gammag="7C00
\mathchardef\Psig="7C09 \mathchardef\Lambdag="7C03
\mathchardef\Xig="7C04 \mathchardef\Pig="7C05
\mathchardef\Upsilong="7C07

\newcounter{theoremcntr}[subsection]
\renewcommand*{\thetheoremcntr}{%
  \ifnum\value{subsection}=0 %
    \thesection
  \else
    \thesubsection
  \fi
  .\arabic{theoremcntr}%
}
\numberwithin{figure}{section}
\theoremstyle{plain}
\newtheorem{thm}{\protect\theoremname}[section]
\theoremstyle{plain} 
\theoremstyle{remark}

\theoremstyle{definition}

\theoremstyle{plain}

\theoremstyle{plain}
\newtheorem{cor}[thm]{\protect\corollaryname}
\theoremstyle{plain}
\newtheorem{prop}[thm]{\protect\propositionname}
\theoremstyle{plain}
\newtheorem{lem}[thm]{\protect\lemmaname}
\theoremstyle{plain}
\newtheorem{conj}[thm]{\protect\conjecturename}
\theoremstyle{plain}

\theoremstyle{plain}

\providecommand{\corollaryname}{Corollary}
\providecommand{\definitionname}{Definition}
\providecommand{\notationname}{Notation}
\providecommand{\lemmaname}{Lemma}
\providecommand{\propositionname}{Proposition}
\providecommand{\remarkname}{Remark}
\providecommand{\theoremname}{Theorem}
\providecommand{\conjecturename}{Conjecture}
\providecommand{\examplename}{Example}

\Crefname{prop}{Proposition}{Propositions}
\Crefname{thm}{Theorem}{Theorems}
\Crefname{lem}{Lemma}{Lemmas}
\Crefname{cor}{Corollary}{Corollaries}
\Crefname{maintheorem}{Theorem}{Theorems}
\def\boxit#1#2{\setbox1=\hbox{\kern#1{#2}\kern#1}%
\dimen1=\ht1 \advance\dimen1 by #1 \dimen2=\dp1 \advance\dimen2 by
#1
\setbox1=\hbox{\vrule height\dimen1 depth\dimen2\box1\vrule}%
\setbox1=\vbox{\hrule\box1\hrule}%
\advance\dimen1 by .4pt \ht1=\dimen1 \advance\dimen2 by .4pt
\dp1=\dimen2 \box1\relax}

\mathchardef\alphag="7C0B \mathchardef\betag="7C0C
\mathchardef\gammag="7C0D \mathchardef\deltag="7C0E
\mathchardef\varepsilong="7C22 \mathchardef\varphig="7C27
\mathchardef\psig="7C20 \mathchardef\zetag="7C10
\mathchardef\epsilong="7C0F \mathchardef\rhog="7C1A
\mathchardef\taug="7C1C \mathchardef\upsilong="7C1D
\mathchardef\iotag="7C13 \mathchardef\thetag="7C12
\mathchardef\pig="7C19 \mathchardef\sigmag="7C1B
\mathchardef\etag="7C11 \mathchardef\omegag="7C21
\mathchardef\kappag="7C14 \mathchardef\lambdag="7C15
\mathchardef\mug="7C16 \mathchardef\xig="7C18
\mathchardef\chig="7C1F \mathchardef\nug="7C17
\mathchardef\varthetag="7C23 \mathchardef\varpig="7C24
\mathchardef\varrhog="7C25 \mathchardef\varsigmag="7C26
\mathchardef\Omegag="7C0A \mathchardef\Thetag="7C02
\mathchardef\Sigmag="7C06 \mathchardef\Deltag="7C01
\mathchardef\Phig="7C08 \mathchardef\Gammag="7C00
\mathchardef\Psig="7C09 \mathchardef\Lambdag="7C03
\mathchardef\Xig="7C04 \mathchardef\Pig="7C05
\mathchardef\Upsilong="7C07

\definecolor{orange}{rgb}{1,0.5,0}

\author[Dehennin]{Luca Dehennin}
\address{KU Leuven, Department of Mathematics, B-3001 Leu\-ven, Bel\-gium}
\email{dehennin.luca@gmail.com}
\urladdr{}

\subjclass[2020]{Primary 11D45, 14G05, 11H46; Secondary 11G50, 11R09, 11C08}
\keywords{Determinant method, rational points of bounded height, number of rational solutions of Diophantine equations, dimension growth conjecture, Schwartz-Zippel lemma, products of linear forms}

\title[Improved projective Schwartz-Zippel and dimension growth bounds]{Improved, sublinear projective Schwartz-Zippel and (sub)quadratic dimension growth bounds in arbitrary codimension}

\makeatother

\begin{document}

\begin{abstract}
We work towards a question raised by Cluckers and Glazer in \cite{CGlaz}, to bring the dimension growth upper bounds and lower bounds for the worst case closer together. To this end, we introduce a sublinear sharpened version of the projective Schwartz-Zippel bound. We prove several cases, including the case of configurations of linear varieties. This leads to subquadratic dimension growth bounds in some low dimensions, improving on the quadratic dependence obtained by Binyamini, Cluckers and Kato in \cite{BinCluKat}. We introduce a natural projection argument with pull-backs and use this to address a second question by Cluckers and Glazer by extending the quadratic dimension growth bounds from \cite{BinCluKat} to arbitrary codimension.
\end{abstract}

\maketitle
	
\section{Introduction}
Recently, Binyamini, Cluckers and Kato obtained quadratic dependence on the degree for dimension growth bounds for hypersurfaces. Their approach relies on the $p$-adic determinant method and the well-known Schwartz-Zippel bound. We sharpen the dependence on the degree using an improved, sublinear version of the Schwartz-Zippel bound, and we extend the results of the determinant method obtained in \cite{BinCluKat} to arbitrary codimension using a natural projection argument. This leads to subquadratic dimension growth in $\PP^3_\QQ$, $\PP^4_\QQ$ and $\PP^5_\QQ$ and to quadratic dimension growth for varieties of any codimension in $\PP^n_\QQ$. We also apply this projection argument to bound integer points on affine varieties in arbitrary codimension.

We use the following notation. For $X\subset \PP_\QQ^n$ we denote by $X(\QQ, B)$ the set of rational points of $X$ of height $\le B$, that is, the set of points $x \in X$ that can be written in homogeneous integer coordinates $(x_0, \dots, x_n)$ with $|x_0|,\dots,|x_n| \le B$. We study uniform upper bounds for $\#X(\QQ, B)$ and their dependence on the degree of $X$. By a variety $X$ we mean a reduced subscheme of $\AA^n_\QQ$ or of $\PP^n_\QQ$.

\subsection{Schwartz-Zippel}

    Recall the well-known Schwartz-Zippel bound for affine varieties. For hypersurfaces this is a special case of the Schwartz-Zippel lemma from theoretical computer science, see \cite{Schwartz} and \cite{Zippel}.
    \begin{lem}[Affine Schwartz-Zippel, {\cite[Lemma 4.1.1]{CCDN-dgc}}]\label{ASZ}
        An affine variety of degree $d$ and pure dimension $k$ in $\AA_\QQ^n$ contains no more than $cdB^{k}$ points of height $\le B$ for some constant $c$ depending only on $k$.
    \end{lem}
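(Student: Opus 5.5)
I would prove Lemma \ref{ASZ} by a generic linear projection onto $\AA^k_\QQ$, which reduces the count to an induction on the number of variables. Here the points of height $\le B$ in affine space are the integer points in the box $[-B,B]^n$.

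\textbf{Reduction to irreducible components.} Since $X$ has pure dimension $k$, the degrees of its irreducible components sum to $d$. It therefore suffices to treat an irreducible $X$ of degree $d$, with a constant depending only on $k$.

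\textbf{Hypersurface case ($k=n-1$).} This is the classical Schwartz--Zippel lemma. I would first do a linear change of coordinates with small integer coefficients so that the defining polynomial $f$ has a nonzero term of the form $c\,x_n^{d}$. Then, for each of the at most $(2B+1)^{n-1}$ choices of $(x_1,\dots,x_{n-1})$, the polynomial $f$ has at most $d$ roots in $x_n$.

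There is a subtlety: the change of coordinates distorts the box. I would avoid it with the standard inductive argument, which needs no change of coordinates. Write
\[
f=\sum_{i\le e} g_i(x_1,\dots,x_{n-1})\,x_n^i
\]
with $g_e\neq 0$. The points with $g_e=0$ number at most $(\deg g_e)(2B+1)^{n-2}\cdot(2B+1)$ by induction on $n$. The points with $g_e\neq 0$ number at most $e(2B+1)^{n-1}$. The total is at most $d(2B+1)^{n-1}$, so the constant depends only on $n=k+1$.

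\textbf{General case.} For irreducible $X$ of dimension $k$ in $\AA^n$, I choose coordinates so that the projection $\pi\colon X\to\AA^{k+1}$ onto the first $k+1$ coordinates is finite and birational onto its image, a hypersurface $Y$ of degree $\le d$. A generic choice of projection achieves this, and it can be taken to be a coordinate projection after a permutation and a suitable linear change. Applying the hypersurface case to $Y$ gives $\ll_k dB^k$ image points.

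\textbf{Main obstacle.} The difficulty is the fibers: a point of $Y$ may have up to $d$ preimages, which would give $d^2$ rather than $d$. I would handle this by a second induction on dimension. Fix a coordinate projection that is generically injective on $X$; the points where injectivity fails lie in a proper closed subset $Z\subset X$. Its dimension is at most $k-1$, and its degree, bounded by Bézout, I hope is $O(d)$ relative to the relevant count.

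A cleaner route is to cover $X$ directly. I would take the $\binom{n}{k+1}$ coordinate projections $\pi_I$. An integer point $x\in X$ is determined by $\pi_I(x)$ for some $I$ such that the fiber is a single point, and for generic $x$ this holds for every $I$. I would then apply induction to the exceptional locus.

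If the degree control on $Z$ fails, the fallback is to slice. Intersect $X$ with the hyperplanes $x_1=a$ for $|a|\le B$. The slices are either of pure dimension $k-1$ and degree $\le d$, or else $X$ is contained in $x_1=a$, which happens for at most one value of $a$ and allows us to drop a variable. Induction on $k$ then gives $(2B+1)\cdot c_{k-1}dB^{k-1}$. The base case $k=0$ is the statement that $X$ has at most $d$ points.

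This slicing argument is actually the cleanest proof overall. I would therefore present it as the main argument, with the only care being the well-known fact that a hyperplane section of a pure $k$-dimensional variety not containing any component is pure of dimension $k-1$ and has degree $\le d$. Components contained in the hyperplane are handled by the induction on $n$.
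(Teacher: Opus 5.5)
Your final hyperplane-slicing argument is correct and complete. Reduce to irreducible $X$; then either $X$ lies in a single hyperplane $\{x_1=a\}$, and you induct on $n$, or each of the $2B+1$ slices is empty or pure $(k-1)$-dimensional of degree $\le d$, and you induct on $k$. This gives $\#X(\ZZ,B)\le d(2B+1)^k$, with a constant depending only on $k$.

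The paper gives no argument of its own here; it only points to the proof of Browning--Heath-Brown's Theorem 1. Your induction is the standard argument for that bound, so it matches what the paper defers to. The earlier projection sketches, whose fibre and exceptional-locus degree issue you correctly flagged as unresolved, are not needed and should be dropped.
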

    \begin{proof}
        This statement follows from the proof of \cite[Theorem 1]{BrowningHeathBrown-Crelle}.
    \end{proof}
    The bound from Lemma \ref{ASZ} is optimal in the degree as it is reached by the union of $d$ parallel linear varieties. Considering the affine cone over a projective variety produces the following projective analogue.
    
    \begin{lem}[Projective Schwartz-Zippel]\label{SZ}
        A projective variety of degree $d$ and pure dimension $k$ in $\PP_\QQ^n$ contains no more than $cdB^{k+1}$ points of height $\le B$ for some constant $c$ depending only on $k$.
    \end{lem}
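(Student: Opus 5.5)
We deduce the statement from Lemma~\ref{ASZ} by passing to the affine cone, as the sentence preceding the lemma suggests. Let $X\subset\PP^n_\QQ$ be a projective variety of degree $d$ and pure dimension $k$. Let $C(X)\subset\AA^{n+1}_\QQ$ be its affine cone, i.e.\ the zero locus of the homogeneous ideal of $X$ (with the same defining equations, now read in $n+1$ affine variables). We will check three facts about $C(X)$. First, $C(X)$ is reduced, since the ideal of $X$ is saturated and radical. Second, it has pure dimension $k+1$, since each irreducible component $X_i$ of $X$ gives the irreducible cone $C(X_i)$ of dimension $\dim X_i+1=k+1$. Third, $\deg C(X)=d$: a generic affine linear subspace of codimension $k+1$ in $\AA^{n+1}$ meets $C(X)$ in exactly as many points as a generic projective linear subspace of codimension $k$ meets $X$. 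Equivalently, the Hilbert polynomial of the homogeneous coordinate ring computes both degrees.

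Next we compare point counts. Each point $x\in X(\QQ,B)$ admits a representative $(x_0,\dots,x_n)\in\ZZ^{n+1}\setminus\{0\}$ with all $|x_i|\le B$. This tuple is a point of $C(X)$, and it is integral of height $\le B$ as an affine point. Choosing one representative per projective point gives an injection from $X(\QQ,B)$ into the set of points of $C(X)$ of height $\le B$ in $\AA^{n+1}$. The fibres of the map from nonzero integral points to projective points are at most countable, but we only need injectivity of the choice.

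Applying Lemma~\ref{ASZ} to $C(X)$, of degree $d$ and pure dimension $k+1$, gives
\[
\#X(\QQ,B)\le \#\{y\in C(X): y \text{ of height}\le B\}\le c\,d\,B^{k+1},
\]
with $c$ depending only on $k+1$, hence only on $k$.

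The only step requiring care is the identification of degree and pure dimension of the cone, together with the convention that ``points of height $\le B$'' in Lemma~\ref{ASZ} includes integer points with coordinates bounded by $B$. If the affine height allowed rational points, our integral points are still among those counted, so the inclusion remains valid. I expect the degree equality $\deg C(X)=\deg X$ to be the main point to justify. I would handle it componentwise via Hilbert polynomials: the Hilbert function of $\QQ[x_0,\dots,x_n]/I(X)$ in degrees $\le m$ is the sum of the graded pieces. That sum has leading coefficient $d/(k+1)!$, which is exactly the affine degree normalization for a $(k+1)$-dimensional affine variety.
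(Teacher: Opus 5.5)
Your proposal is correct and follows the route the paper indicates. The paper gives no separate proof, only the remark that one passes to the affine cone; your argument supplies the details (the cone is reduced, of pure dimension $k+1$ and degree $d$, and choosing one integral representative with coordinates bounded by $B$ for each point injects $X(\QQ,B)$ into the integral points of the cone counted by Lemma~\ref{ASZ}). The only caveat concerns your hedge: Lemma~\ref{ASZ} must be read as counting integral points in the box $[-B,B]^{n+1}$, since for rational points of height $\le B$ the exponent $B^{k}$ would be false, and that integral reading is exactly what your injection uses.
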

    
    The bound from Lemma \ref{SZ} cannot be reached by the same example as in the affine case since $d$ parallel linear varieties do not appear as the affine cone over a projective variety. It is natural to ask whether some other union of $d$ linear varieties does reach this bound; however, this is not the case. We will establish a stronger bound for the number of rational points on $d$ linear varieties in $\PP^n$. If we assume that, as in the affine case, the optimal bound is reached by a union of $d$ linear varieties, this would constitute the following improvement.
    
    \begin{conj}[Sublinear projective Schwartz-Zippel]\label{SLSZ}
        A projective variety of degree $d$ and pure dimension $k$ in $\PP_\QQ^n$ contains no more than $cd^\frac{n}{n+1}B^{k+1}$ points of height $\leq B$ for some constant $c$ depending only on $n$ and $k$.
    \end{conj}
    
    The dependence on the degree in this bound is necessarily optimal. We show this by proving it is reached by a concrete example in Lemma \ref{planecover}. This is discussed in more detail below Proposition \ref{planecount}. In this paper we work towards Conjecture \ref{SLSZ} by proving it fully for the union of $d$ linear varieties for all $n$ and $k$ in Proposition \ref{planecount} using a result of Schmidt \cite{Schmidt-lattices}. For arbitrary varieties, we establish Conjecture \ref{SLSZ} for $n=2$ in Corollary \ref{curvein2zip} and for $n=3$ up to a factor $(\log B)^\kappa$, see Proposition \ref{curvein3zip} and Corollary \ref{planein3zip}. We also establish a sublinear Schwartz-Zippel up to a factor $(\log B)^\kappa$ for curves of arbitrary codimension and for surfaces and 3-folds in low-dimensional spaces. However, in these last bounds the exponent of $d$ is still larger than in Conjecture \ref{SLSZ}. Conjecture \ref{SLSZ} is also treated under certain conditions in \cite{C-dgc}, leading to a proof of dimension growth in all degrees.
    
    We use these sublinear Schwartz-Zippel bounds in $\PP^3, \PP^4$ and $\PP^5$ to derive a subquadratic dimension growth result for varieties of any dimension in $\PP^3,  \PP^4$ and $\PP^5$.

\subsection{Dimension Growth}

    Let us recall the current state of the art of projective dimension growth bounds. For a much broader account of the context and state of the art of the dimension growth conjecture, see \cite{BPW-survey} and \cite[Chapter 2]{Browning-Q}. Binyamini, Cluckers and Kato obtained quadratic dependence on the degree for hypersurfaces of any dimension.

    \begin{thm}[Quadratic projective dimension growth for hypersurfaces, \cite{BinCluKat}]\label{DG}
        Let $n>1$ be given. There are constants $c=c(n)$ and $\kappa = \kappa(n)$ such that for any $d\ge 4$, any irreducible hypersurface $X$ of degree $d$ in $\PP_\QQ^n$ and any $B>2$ one has $$\#X(\QQ, B) \le cd^2B^{\dim X}(\log B)^\kappa.$$
    \end{thm}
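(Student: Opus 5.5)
The plan is to follow the $p$-adic determinant method strategy of \cite{BinCluKat}, organised as an induction on $n$ via hyperplane slicing. The base case is $n=2$, where $X$ is an irreducible plane curve of degree $d\ge 4$ and $\dim X=1$. Here a uniform Bombieri--Pila / Walsh type bound $O(d^2 B)$, or even $O(d^2B^{2/d}(\log B)^\kappa)$, already suffices. To get it, I would use the determinant method: cover $X(\QQ,B)$ by $O(d^2)$-ish auxiliary curves of low degree, then bound the intersections via B\'ezout.

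For the main step in $\PP^n$, I would run the $p$-adic determinant method with a prime $p$ chosen of size about a power of $B^{1/d}$ (times logarithmic factors), so that the points of $X(\QQ,B)$ split into residue classes modulo $p$ by their reduction in $X(\mathbb F_p)$. For each smooth residue class, the determinant estimate (via a Bombieri--Pila style Vandermonde/Taylor expansion in local $p$-adic coordinates) produces an auxiliary hypersurface of degree $O_n(1)$ when $d\ge 4$. This hypersurface does not contain $X$ and contains all points in that class. The number of $\mathbb F_p$-points on $X$ is $O(dp^{n-1})$ by Lang--Weil type or Schwartz--Zippel type counting. Together these give a covering of $X(\QQ,B)$ by $O(d\,p^{n-1})$ pieces $X\cap Y_i$, each of dimension $n-2$ and degree $O(d)$. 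Singular residue classes are handled by the same method applied to a suitable partial derivative of the defining form, with the singular locus having degree $O(d^2)$.

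To count points on the intersections $X\cap Y_i$, I would slice them with hyperplanes $H$ of small height and apply induction to the irreducible components of the slices, using Lemma \ref{SZ} for the components whose contribution is trivially small. To control the degree of the final bound, the components of degree $1$ and $2$ (linear spaces and quadrics) must be treated separately, because they can carry $B^{\dim}$-many points. The total contribution of these low-degree components is bounded by B\'ezout by $O(d\cdot \#\{i\})$. Balancing this against the choice of $p$ should give $d\cdot d = d^2$ with a loss of $(\log B)^\kappa$.

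The main obstacle I expect is exactly this accounting of degree-$\le 2$ components inside $X\cap Y_i$ together with keeping the exponent of $d$ at $2$ uniformly in $n$. My first attempt would be to choose $p\approx d^{?}B^{1/d}$ so that $p^{n-1}$ contributes no extra $d$-power beyond $B^{n-1}$. I would then use a Salberger-type global determinant argument, a single auxiliary hypersurface of degree $O(d^{?}B^{(\dots)/d})$, to avoid summing over residue classes. Failing that, I would fall back on the Cluckers--Castryck--Dittmann--Nguyen reduction from hypersurfaces in $\PP^n$ to the case $n=3$ via generic projections. The latter case is then handled by the surface bound $O(d^2B(\log B)^\kappa)$ for points off lines, plus an explicit count of the lines on the surface.
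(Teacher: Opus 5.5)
\textbf{There is a genuine gap.} Your outline is essentially the classical Heath-Brown--Salberger--CCDN route: one large prime, bounded-degree auxiliary hypersurfaces, then induction by slicing. That route yields only polynomial dependence on $d$ (\cite[Theorem 1]{CCDN-dgc}). The step that is supposed to produce $d^2$ (``balancing \dots\ should give $d\cdot d=d^2$'', with $p\approx d^{?}B^{1/d}$) has no mechanism behind it.

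Your parameters are also inconsistent. With auxiliary forms of degree $D=O_n(1)<d$, the monomials are independent on $X$ and the $p$-adic gain at a smooth class depends only on $D$. The interpolation determinant then vanishes only once $p\ge B^{c/D^{1/(n-1)}}$, a fixed power of $B$. Even with $D\ge d$, the saving in $\PP^n$ is governed by $d^{1/(n-1)}$, not by $d$.

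Consequently your covering uses on the order of $d\,p^{n-1}$ pieces with $p$ a positive power of $B$. Schwartz--Zippel on each piece is then far too weak, and you need genuine dimension growth for the pieces $X\cap Y_i$. This fails in two places:
\begin{itemize}
\item The pieces have codimension $2$, so an induction hypothesis about hypersurfaces applies neither to them nor to their hyperplane slices.
\item Their linear or quadric components carry about $B^{n-1}$ \emph{points} each. That is up to $d\cdot\#\{i\}\cdot B^{n-1}$ points in total, not the $O(d\cdot\#\{i\})$ you state.
\end{itemize}
The fallback (reduction to $\PP^3$ plus a $d^2$ bound for surfaces off lines) loses powers of $d$ in the effective Bertini step. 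It also assumes a surface bound that is essentially the statement itself.

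The missing idea comes from \cite{BinCluKat} and is what the paper uses in Proposition \ref{detmeth} and the proof of Theorem \ref{CDDG}: reverse the choice of parameters. In the range $(\log B)^N<d<B^E$, take auxiliary forms of degree $d-1$ and many \emph{small} primes $p_i\in[\log B,M(\log B)^\ell]$.
\begin{itemize}
\item The number of residue classes is at most $\#\PP^n(\mathbb F_{p_i})\ll p_i^n$. This is polylogarithmic and independent of $d$, and it beats $d\,p_i^{n-1}$ precisely because $p_i<d$.
\item At classes of multiplicity $\le d/(\log B)^\alpha$, the valuation bound of \cite[Corollary 2.5]{CCDN-dgc} beats the archimedean size $B^{(d-1)s}$, so the degree-$(d-1)$ determinant vanishes.
\item Points of high multiplicity modulo every $p_i$ lie on a single hypersurface of degree $(\log B)^C<d$.
\end{itemize}
This gives $O((\log B)^\kappa)$ hypersurfaces of degree $\le d-1$, none containing $X$. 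Each $X\cap V(g_i)$ has dimension $n-2$ and degree $\le d(d-1)$, so Lemma \ref{SZ} bounds it by $\ll d^2B^{n-1}$ directly. No induction and no special treatment of linear or quadric components is needed, since $B^{(n-2)+1}$ is already the target exponent. The remaining ranges are handled separately: $d\le(\log B)^N$ by the polynomial bound of \cite[Theorem 1]{CCDN-dgc} (and \cite[Theorem 4]{BinCluKat} for $d=4$), and very large $d$ by the trivial bound.
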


    It is plausible that Theorem \ref{DG} also holds for degrees $2$ and $3$, but currently these cases are only known in somewhat weaker forms, see \cite[Theorem 2]{Heath-Brown-Ann} for $d=2$, and \cite{Salberger-dgc}, \cite{Salb:d3} for $d=3$.
    
    It is likely that the dependence on the degree in Theorem \ref{DG} is suboptimal. Cluckers and Glazer proved the following lower bounds for the worst case.
    
    \begin{prop}[Lower bounds for the worst case in projective dimension growth, {\cite[Proposition 1]{CGlaz}}]\label{LB}
        For any $n \ge 2$ and any $N$, there exist $c= c(n) > 0$, $B>N$, $d> N$, and an integral hypersurface $X \subset \PP_\QQ^n$ of degree $d$ such that $$cd^{2-\frac{2}{n}}B^{\dim X} \le \#X(\QQ, B).$$
    \end{prop}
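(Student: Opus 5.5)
We would prove Proposition \ref{LB} by interpolation: force an irreducible hypersurface of moderate degree to pass through \emph{all} rational points of small height in $\PP^n_\QQ$. Fix a large parameter $T$ and let $S\subset \PP^n(\QQ)$ be the set of points of height $\le T$, so $\#S \ge c_1 T^{n+1}$ with $c_1=c_1(n)>0$. Vanishing at a point of $S$ is one linear condition on the $\binom{d+n}{n}$ coefficients of a degree-$d$ form. Hence, as soon as $\binom{d+n}{n} > \#S$, the space $V_d$ of degree-$d$ forms with rational coefficients vanishing on $S$ is nonzero. We choose $d$ minimal with this property, so that $d \asymp_n T^{(n+1)/n}$. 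We then take $B=T$; for $T$ large we have $B>N$ and $d>N$.

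Granting that $V_d$ contains a form $F$ defining an integral hypersurface $X$, the count is immediate. We have $\#X(\QQ,B)\ge \#S\ge c_1T^{n+1}$, while
$$d^{2-\frac2n}B^{n-1}\asymp T^{\frac{(n+1)(2n-2)}{n^2}+n-1}.$$
Since $\frac{2(n^2-1)}{n^2}\le 2$, the right-hand side is $O(T^{n+1})$. This gives $\#X(\QQ,B)\ge c\,d^{2-2/n}B^{\dim X}$ with $\dim X=n-1$, in fact with room to spare.

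The main obstacle is producing an irreducible member of $V_d$ defined over $\QQ$. Our plan is to show that the linear system $|V_d|$ has base locus exactly the finite set $S$, and that it is not composed with a pencil. Bertini's theorem then says a general member over $\overline{\QQ}$ is irreducible, and it is reduced away from the base points. Because $d\asymp T^{(n+1)/n}$ grows faster than $T$, for large $T$ we have $d\ge 2(2T+1)+2$. This lets us write down explicit elements of $V_d$, for instance products of the form
$$\ell\cdot\prod_{a=-T}^{T}(x_i-a x_j)$$
times further linear factors, which vanish on $S$ for suitable choices and whose common zero locus is $S$. We also check that the ideal of $S$ is generated in degree $\le d/2$; then $V_d$ separates points and tangent directions off $S$, so the image of the rational map defined by $|V_d|$ has dimension $n\ge2$ and the system is not composite with a pencil.

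Geometric irreducibility is an open condition on the parameter space $\PP(V_d\otimes\overline{\QQ})$. This parameter space is defined over $\QQ$, and its $\QQ$-points are Zariski dense in it. So some $F\in V_d$ with rational coefficients defines a geometrically integral hypersurface of degree $d$, which finishes the construction. We expect the verification that the base locus is exactly $S$ and that the system is not composite with a pencil to be the only step needing real care; the point count above is routine.
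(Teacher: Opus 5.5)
\textbf{The paper does not prove this proposition}; it quotes it from \cite[Proposition 1]{CGlaz}. Judged on its own, your argument has a genuine gap exactly at the step you flag. Moreover, the gap cannot be closed at the degree you chose.

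\textbf{The base locus is not $S$.} You take $d\asymp T^{(n+1)/n}$. Restrict any $F\in V_d$ to the line $\ell=\{x_2=\dots=x_n=0\}$. You get a binary form of degree $d$ vanishing on $\PP^1(\QQ,T)$, which has $\asymp T^2>d$ points, so $F|_\ell\equiv 0$. Hence the base locus of $|V_d|$ contains $\ell$, and indeed every rational line carrying more than $d$ points of $S$. For the same reason your claim that $I(S)$ is generated in degree $\le d/2$ is false: forms of that degree vanishing on $S$ all vanish on $\ell$, so they cannot cut out $S$.

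\textbf{For $n=2,3$ every member is reducible.} For $n=2$, $\ell$ is a fixed component: $x_2\mid F$ for every $F\in V_d$, so no member is irreducible. For $n=3$, suppose $F|_P\neq 0$ on the plane $P=\{x_3=0\}$. Then $V(F|_P)$ is a plane curve of degree $d\asymp T^{4/3}$ containing $\PP^2(\QQ,T)$, which has $\asymp T^3$ points. Corollary \ref{curvein2zip} allows at most $\ll d^{2/3}T^2=o(T^3)$ such points. (Elementarily: $F|_P$ would have to contain the $\asymp T^2\gg d$ lines $b^\perp$ with $|b|\ll T^{2/3}$, each of which carries more than $d$ points of $S$.) So $x_3\mid F$ for all $F\in V_d$.

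\textbf{Other problems with the proposal.} The explicit products you suggest do not lie in $V_d$: $\prod_{a=-T}^{T}(x_i-ax_j)$ vanishes at $x$ only when $x_i/x_j$ is an integer (or $x_i=x_j=0$), so it misses most of $S$. Also, with $d$ chosen minimal, $\dim V_d$ may be $1$, and then Bertini says nothing.

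\textbf{The underlying reason.} An integral hypersurface of degree $d\ge 2$ contains no hyperplane. So $X\cap\{x_n=0\}$ is a degree-$d$ hypersurface of $\PP^{n-1}$ containing $\PP^{n-1}(\QQ,T)$. By the heuristic of Conjecture \ref{SLSZ} in $\PP^{n-1}$ (provable for $n\le 3$ as above), this forces $d\gg T^{n/(n-1)}$, not $T^{(n+1)/n}$. At $d\asymp T^{n/(n-1)}$ your count gives $T^{n+1}\asymp d^{2-2/n}T^{n-1}$ exactly. That is where the exponent in the statement comes from, and there is no room to spare.

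The ``room to spare'' you found, exponent $2-\frac{2}{n+1}$, should have been a warning sign. It would match the conditional upper bound of Theorem \ref{SLSZtoSQDG} and close the gap that the paper says still remains.

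\textbf{How to repair it.} Work at degree of order $T^{n/(n-1)}$ (or larger). Then actually prove that the relevant linear system has an integral member over $\QQ$. For example, show it has no fixed component and is not composite with a pencil, or exhibit an irreducible member of a suitable pencil and apply Hilbert irreducibility. That irreducibility argument is the real content of the proposition, and it is missing from your proposal.
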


    There remains a small gap in the exponent of the degree between Theorem \ref{DG} and Proposition \ref{LB}. \cite{CGlaz} asks whether the upper bounds can be improved to meet the lower bounds for the worst case of Proposition \ref{LB}. The following proposition is a first step in closing this gap between Theorem \ref{DG} and Proposition \ref{LB}, specifically for curves.
    
    \begin{prop}[Subquadratic projective dimension growth for curves, {\cite[Proposition 5]{CGlaz}}]\label{CG5}
        Let $C$ be a curve in $\PP_\QQ^n$ of degree $d>1$ with no linear components for some $n > 1$ and let $B>2$ be given. Then one has $$\#C(\QQ, B) \le c(n)d^{\frac{4}{3}}B(\log B)^\kappa$$ for some constant $c(n)$ and absolute constant $\kappa$.
    \end{prop}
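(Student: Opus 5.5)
We prove Proposition \ref{CG5} by projecting to the plane, bounding points on each irreducible component there, and then adding up over components.

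\textbf{Reduction to irreducible plane curves.} Decompose $C$ into irreducible components $C_1,\dots,C_r$ of degrees $d_1+\dots+d_r=d$, each with $d_i\ge 2$. Since $d_i\mapsto d_i^{4/3}$ is superadditive, it suffices to prove the bound for an irreducible curve of degree $d\ge 2$ that is not a line. Choose a generic linear projection $\pi:\PP^n\dashrightarrow\PP^2$, defined over $\QQ$ by a linear map with integer coefficients bounded in terms of $n$ only. Such a projection exists by a standard counting argument; it avoids the finitely many bad choices, for instance the choices whose center meets $C$ or for which $\pi|_C$ fails to be birational.

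Then $\pi(C)$ is an irreducible plane curve of degree $d$. Points of height $\le B$ map to points of height $\le c(n)B$. The fibres of $\pi|_C$ have size at most $d$ but are generically of size one. A bounded number of bad fibres could each contain many points, so I would control this by choosing $\pi$ injective on $C(\QQ,B)$ outside a subvariety of bounded degree. The cleaner alternative is to work directly with the degree-$d$ plane model and use that $\pi|_C$ is birational, so that at most $O(d^2)$ points, coming from singularities of $\pi(C)$, are not separated. This costs only $O(d^2)$, which is absorbed when $B$ is large compared to $d$; the small-$B$ regime is handled by the trivial bound from Lemma \ref{SZ}, namely $O(dB^2)$.

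\textbf{The two plane bounds.} For an irreducible plane curve $D$ of degree $d$, I would combine two bounds.

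\begin{enumerate}
\item The determinant method (Bombieri--Pila / Heath-Brown, in the uniform form of Walsh and Castryck--Cluckers--Dittmann--Nguyen) gives $\#D(\QQ,B)\le c\,d^{4}B^{2/d}(\log B)^\kappa$.
\item For large $B$ relative to $d$, the auxiliary-polynomial form of the determinant method covers the points by $O(d\,B^{2/d}\cdots)$ curves of degree $O(d)$ not containing $D$. Intersecting via B\'ezout then gives a bound of the shape $c\,d^2B^{2/d}(\log B)^\kappa + \ldots$.
\end{enumerate}

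The clean interpolation goes as follows.
\begin{itemize}
\item If $d\ge \log B$, then $B^{2/d}\ll 1$, and the covering argument gives at most $O(d^2)$ points. We have $d^2\le d^{4/3}B$ as soon as $d^{2/3}\le B$; otherwise $d^{2/3}>B$, and the trivial count $O(B^2)\le O(d^{4/3})$ suffices.
\item If $d\le \log B$, the bound $c\,d^{4}B^{2/d}$ is at most $d^{4/3}B(\log B)^\kappa$ for $d\ge 3$, since $d^{8/3}B^{2/d-1}\le(\log B)^{8/3}$ and $2/d<1$.
\item For $d=2$, a conic is either rational and parametrized by quadratic forms, giving $O(B)$ points, or it has no rational points.
\end{itemize}

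\textbf{Main obstacle.} The hardest step is the regime $d\approx B^{\alpha}$, where neither the $B^{2/d}$ factor nor the trivial bounds are small, together with making the projection step lose no power of $d$. There I would balance the determinant-method bound $O(d^2+dB^{2/d}\ldots)$ against the Schwartz--Zippel-type count $O(dB^2)$ for the rational points lying on the covering curves. Optimizing at $d^{2/3}\sim B$ is exactly what produces the exponent $4/3$, and I expect this balancing to be where the real work lies.
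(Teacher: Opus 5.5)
There is a genuine gap, and it lies in the reduction from $\PP^n$ to $\PP^2$, i.e.\ in the range $d^{1/3}<B<d^{2/3}$ that you flag yourself as the main obstacle.

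\textbf{The projection step.} You claim that a projection $\pi:\PP^n\dashrightarrow\PP^2$, with centre missing $C$ and $\pi|_C$ birational, can be chosen with coefficients bounded in terms of $n$ alone. This is unjustified. The bad choices are not finitely many: they form a closed subset of the Grassmannian whose degree grows with $d$. For instance, an irreducible curve of degree $d\gg H^{n+1}$ can pass through every point of $\PP^n(\QQ,H)$, and then every centre of bounded height meets $C$. Good projections cost a height factor polynomial in $d$, like the factor $c_nd^{2(n-m-1)^2}$ from \cite[Lemma 5.1]{CCDN-dgc} used in Proposition \ref{gendetmeth}.

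Such a loss is harmless when $B\ge d^{2/3}$, since it only enters through $(d^{a}B)^{2/d}$ and a logarithm. It is fatal in your case $d^{2/3}>B$, which rests on the trivial count $\#\PP^2(\QQ,B')\ll B'^3\le d^{4/3}B$; that count needs $B'=O_n(B)$. (Presumably this count is what your ``$O(B^2)\le O(d^{4/3})$'' means; a curve has no trivial $O(B^2)$ bound.)

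\textbf{The singular fibres and the balancing.} Independently, the $O(d^2)$ points over singularities of $\pi(C)$ are absorbed only for $B\ge d^{2/3}$, or via $O(dB^2)$ for $B\le d^{1/3}$. The balancing you propose against the Schwartz--Zippel count $O(dB^2)$ only gives the exponent $3/2$. At $B=d^{1/2}$ one has $\min(d^2,dB^2)=d^2=d^{3/2}B$, which exceeds $d^{4/3}B$.

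\textbf{The missing idea} is to avoid projecting altogether, as the paper does. \cite[Theorem 1]{BinCluKat} already holds for curves in $\PP^n$ and gives $\#C(\QQ,B)\le c\,d^2(\log B)^\kappa$ once $d>\log B$. For small $B$, the paper replaces the trivial count by the geometry-of-numbers covering of Lemma \ref{planecover}:
\begin{itemize}
\item Let $\PP^m$ be the linear span of an irreducible non-linear component, so $m\ge2$. Then $\PP^m(\QQ,B)$ is covered by $O(B^{(m+1)/m})$ hyperplanes of $\PP^m$.
\item None of these hyperplanes contains the component, so each meets it in at most $d$ points. This gives $O(dB^{(m+1)/m})$ points.
\item This is at most $d^{(m+2)/(m+1)}B$ precisely when $B\le d^{m/(m+1)}$. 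In the complementary range, $d^2\le d^{(m+2)/(m+1)}B$.
\end{itemize}
That is Proposition \ref{CG5b}. Since $(m+2)/(m+1)\le 4/3$, your superadditivity remark then finishes the proof.

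Your argument does work for $n=2$, where $B^3\le d^{4/3}B$ does the job of the line covering. But for a component spanning $\PP^m$ with $m\ge3$, the trivial bound $B^{m+1}$ only yields the exponent $2m/(m+1)\ge 3/2$. This is why the covering lemma is needed.
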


    In \cite{CGlaz}, only the case $n=2$ is proven. In this paper, we prove a slightly stronger statement that implies Proposition \ref{CG5} for general $n$ in Proposition \ref{CG5b}.

    We continue the work towards closing the gap between Theorem \ref{DG} and Proposition \ref{LB} for varieties of dimension $\ge 2$. We will prove subquadratic dimension growth bounds for hypersurfaces in $\PP^3$, $\PP^4$ and $\PP^5$ in Propositions \ref{surfinn}, \ref{3foldsinn} and \ref{4foldsin5} respectively. And for surfaces in arbitrary codimension in \ref{surfinn}.
    
    \cite{CGlaz} also asks for the optimal dependence on $d$ for dimension growth bounds for varieties of arbitrary codimension instead of hypersurfaces. We work towards this question by generalizing Theorem \ref{DG} to irreducible varieties of arbitrary codimension in $\PP_\QQ^n$. To this end, we employ a natural projection argument with pull-backs, that retains the dependence on the degree.
    
    \begin{thm}[Quadratic projective dimension growth in all codimensions]\label{CDDG}
        Let $n>1$ be given. There are constants $c=c(n)$ and $\kappa = \kappa(n)$ such that for any $d\ge 4$, any irreducible variety $X$ of degree $d$ in $\PP_\QQ^n$ and any $B>2$ one has $$\#X(\QQ, B) \le cd^2B^{\dim X}(\log B)^\kappa.$$
    \end{thm}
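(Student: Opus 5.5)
Let $X\subset\PP^n_\QQ$ be irreducible of dimension $k$ and degree $d\ge 4$. The plan is to reduce to \Cref{DG} by projecting $X$ birationally onto a hypersurface $Y\subset\PP^{k+1}_\QQ$ of the same degree. The lost points are then controlled by pulling back along the projection, which is why the projection must be chosen with integer coefficients of bounded size. If $k=n-1$ there is nothing to prove. If $k=0$, $X$ is a point and the bound is trivial. So assume $1\le k\le n-2$.

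\emph{Choosing the projection.} A generic linear projection $\pi:\PP^n\dashrightarrow\PP^{k+1}$ (centre $\Lambda$ of dimension $n-k-2$, disjoint from $X$) has these properties:
\begin{itemize}
\item $\pi|_X$ is finite and birational onto its image $Y$;
\item $Y$ is an irreducible hypersurface of degree exactly $d$;
\item $\pi$ is given by $k+2$ linear forms with integer coefficients.
\end{itemize}
The coefficients can be bounded by a constant $M=M(n)$ independent of $d$ and $X$. The bad projections form a proper closed subset of the Grassmannian of degree polynomial in $d$. If $M$ were allowed to depend on $d$, we could pick good integer coefficients of size $O(d^{c})$. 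To get uniformity we instead use a chain of projections from single points, each taken from a small fixed box.
\begin{itemize}
\item Disjointness from $X$: projecting from a point not on $X$ only requires a rational point outside $X$. Since $X\neq\PP^n$, some point with coordinates in $\{0,1\}$ works, because a proper subvariety cannot contain all of these.
\item Birationality: a point $p$ fails only if it lies on the secant-type locus $\{p : \text{a generic line through } p \text{ meets } X \text{ twice}\}$. For $k\le n-2$ this locus is a proper closed subset, by the secant variety dimension count.
\end{itemize}
The main obstacle is showing that a point of height bounded in terms of $n$ alone avoids this bad locus. We try the following first. It suffices that the projection is injective on a dense open subset of $X$. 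So we fix a subvariety $Z\subset X$ of smaller dimension and lower degree where birationality can fail, and treat the points of $Z$ separately by induction on dimension.

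\emph{Counting.} Write $\pi(x)=A x$ with $A$ an integer matrix, $\|A\|\le M$. A point $x\in X(\QQ,B)$ maps to a point of $Y(\QQ,(n+1)MB)$, and on the open set $U$ where $\pi|_X$ is injective the map $x\mapsto\pi(x)$ is injective. \Cref{DG} applied to $Y$ gives
$$\#U(\QQ,B)\le \#Y(\QQ,cB)\le c\,d^2B^{k}(\log B)^\kappa.$$
The complement $X\setminus U$ lies in the preimage of the non-injectivity locus. That locus is contained in a proper closed subset of $X$ of dimension $\le k-1$ and degree $O(d^2)$, for instance via the ramification/double-point locus cut out by a hypersurface section of degree $O(d)$. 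Apply \Cref{SZ} to its components:
$$\#(X\setminus U)(\QQ,B)\le c\,d^2B^{k}.$$
This matches the target exponent, since projective Schwartz--Zippel in dimension $k-1$ gives $B^{k}$.

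The only delicate inputs are therefore:
\begin{itemize}
\item a bounded-height choice of the projection, with the bound depending only on $n$;
\item an $O(d^2)$ degree bound for the bad locus;
\item the case $d<4$ for $Y$. This does not arise, since $\deg Y=d\ge4$.
\end{itemize}
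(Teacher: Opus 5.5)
There is a genuine gap, and it is exactly the step you flagged: the projection cannot in general be chosen with coefficients bounded in terms of $n$ alone.

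Your disjointness argument is false. Any finite set of rational points lies on an irreducible $k$-dimensional $X$ of large degree. For example, Lagrange interpolation gives a rational curve of degree $<2^{n+1}$ through all $\{0,1\}$-points, and cones over such curves work for every $k\le n-2$. So no box of size $M(n)$ is guaranteed to contain a centre missing $X$. Still less is it guaranteed to contain one avoiding the non-birationality locus, a closed set whose degree grows with $d$ and which may therefore contain every point of height $\le M$.

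The fallback via a proper $Z$ does not rescue this. If $\pi|_X$ is not birational, the failure of injectivity is not confined to any proper subvariety, so there is no $Z$ to induct on.

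What is actually available is a projection with coefficients polynomial in $d$, as you note yourself; \cite[Lemma 5.1]{CCDN-dgc} gives $\pi(X(\QQ,B))\subset Y(\QQ,c_nd^{2(n-k-1)^2}B)$. But then feeding $Y$ into \Cref{DG} as a black box yields $d^2(d^{a}B)^{k}(\log B)^\kappa$, and the quadratic dependence on $d$ is lost.

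The missing idea is not to count points on $Y$ at all. First dispose of two ranges:
\begin{itemize}
\item the trivial range $d\ge B^{(n-k+1)/2}$, where $\#\PP^n(\QQ,B)\le cB^{n+1}$ already suffices;
\item the range $d\le(\log B)^N$, where the polynomial-in-$d$ bound of \cite[Theorem 1]{CCDN-dgc} suffices, with $d=4$ handled separately.
\end{itemize}
This leaves $(\log B)^N<d<B^E$. Now apply the determinant method (\Cref{detmeth}) to $Y$ at the inflated height. It produces $\ll(\log(d^{a}B))^{\kappa}\ll(\log B)^{\kappa}$ hypersurfaces $h_i'$ of degree $d-1$ covering $\pi(X(\QQ,B))$; the height enters only logarithmically.

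Then pull them back. The hypersurfaces $h_i=\pi^{-1}(h_i')$ cover $X(\QQ,B)$, and none contains $X$ because $\pi(X)=Y\not\subset h_i'$. So $X\cap h_i$ has dimension $k-1$ and degree $\le d(d-1)$, and \Cref{SZ} at the original height $B$ gives at most $cd^2B^{k}$ points for each $i$.

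This is \Cref{gendetmeth} together with the paper's proof of the theorem. It needs only that $\deg Y$ stays large and that the height inflation is polynomial in $d$. Injectivity of $\pi$ on rational points and the double-point locus play no role.
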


    The quadratic dependence on $d$ in Theorem \ref{CDDG} improves upon the polynomial dependence obtained in \cite[Theorem 1]{CCDN-dgc}.

    The following theorem summarizes our improvements to dimension growth bounds.

    \begin{thm}[Subquadratic projective dimension growth]\label{summary}
        Let $n>1$ be given. There are constants $c=c(n)$ and $\kappa = \kappa(n)$ such that for any $d\ge 4$, any irreducible variety $X$ of degree $d$ and dimension $k$ in $\PP_\QQ^n$ and any $B>2$ one has $$\#X(\QQ, B) \le cd^{2e(n,k)}B^{\dim X}(\log B)^\kappa,$$ where $e(n,k) \le 1$ and, more specifically, $$e(n,k) = \begin{cases}
            \frac{3}{4} &\text{if } k = 1 \\
            \frac{3(n-1)}{3n-1} &\text{if } k = 2 \\
            \frac{18n-26}{15n-5} &\text{if } k = 3 \text{ and } n \le 6 \\
            \frac{209}{210} &\text{if } k = 4 \text{ and } n = 5 \\
        \end{cases}$$ In all other cases $e(n,k)=1$.
    \end{thm}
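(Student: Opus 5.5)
The theorem collects several bounds that are proved separately later in the paper, so the plan is to prove each row of the table and assemble them. The default $e(n,k)=1$ is exactly Theorem \ref{CDDG}. I would prove that first by a projection argument.

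\emph{Step 1: projection with pull-backs.} Choose a linear projection $\pi:\PP^n\dashrightarrow\PP^{k+1}$ with small integer coefficients, bounded in terms of $n$ only. It should restrict to a finite birational map from $X$ onto a hypersurface $Y$ of degree $d$. Such a projection maps $X(\QQ,B)$ into $Y(\QQ,c'B)$. The fibres may be non-trivial, so I would not count only images. Instead I would rerun the $p$-adic determinant method of \cite{BinCluKat} for $Y$ and pull the resulting auxiliary hypersurfaces back to $X$. This covers $X(\QQ,B)$ by $O(d^{2}B^{k-1}\ldots)$-many pieces, or by auxiliary hypersurfaces of controlled degree not containing $X$, with the same $d$-dependence as for $Y$. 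The points lying in the exceptional locus of $\pi|_X$ form a lower-dimensional subvariety of degree $\le d$, which I would handle by induction on $k$. Making the projection choice uniform while keeping degree and height under control is routine but fiddly.

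\emph{Step 2: subquadratic cases.} The determinant method gives a covering of $X(\QQ,B)$ by hypersurfaces $H$ of degree roughly $D\approx B^{1/d^{1/k}}$-type, or by $\ll d^{a}B^{b}$ pieces, not containing $X$. This is the pull-back of Step 1, with a free parameter. Each intersection $X\cap H$ has dimension $k-1$ and degree $\le dD$. The quadratic bound of \cite{BinCluKat} uses the linear Schwartz-Zippel Lemma \ref{SZ} on these intersections. I would replace it by the sublinear bounds established earlier: Conjecture \ref{SLSZ} for linear configurations, for $n=2$, for $n=3$ up to logarithms, and the weaker sublinear exponents for curves, surfaces and $3$-folds in low dimension. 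This gives a count of the shape $d^{\alpha}D^{\beta}B^{k}+(\text{number of pieces})\cdot(\text{SZ bound})$.

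Next, isolate the components of $X\cap H$ of very large degree or with many points, for instance linear components, and treat them separately. Then balance the parameter $D$, or the pieces' size, against the sublinear exponent. This yields the exponents $3/4$ for curves (recovering Proposition \ref{CG5b}), $\frac{3(n-1)}{3n-1}$ for surfaces, $\frac{18n-26}{15n-5}$ for $3$-folds with $n\le6$, and $\frac{209}{210}$ for $4$-folds in $\PP^5$. For $k\ge2$, the $(k-1)$-dimensional intersections are handled by induction using the already-proved lower-dimensional sublinear bounds. Finally, $e(n,k)\le1$ is checked directly from the formulas.

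\emph{Main obstacle.} The hard step is the optimization in Step 2. The sublinear Schwartz-Zippel exponent available for $(k-1)$-dimensional varieties in $\PP^n$ is only partial, either worse than $\frac{n}{n+1}$ or carrying $(\log B)^\kappa$. One must check that the balancing still beats the quadratic exponent, and this fails for large $n$ relative to $k$, which is why the list stops. Components of $X\cap H$ on which the sublinear bound is not available must be separated and counted with the trivial bound without losing the gain. The logarithmic losses accumulate into $\kappa(n)$.
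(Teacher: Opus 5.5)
Your architecture is the paper's. The $e=1$ row is Theorem~\ref{CDDG}: project to a hypersurface, run the determinant method of \cite{BinCluKat} there, and pull the auxiliary hypersurfaces back. The rows $k=2,3,4$ come from applying sublinear Schwartz--Zippel bounds to the $(k-1)$-dimensional intersections. However, the curve row has a genuine gap. For $k=1$ the intersections $C\cap h_i$ are $0$-dimensional of degree $\le d^2$. The only sublinear bound available for them, Proposition~\ref{pointsinnzip}, amounts to balancing the trivial count $d^2(\log B)^\kappa$ against $\#\PP^n(\QQ,B)\ll B^{n+1}$. This yields $d^{2n/(n+1)}B$, i.e.\ $e=\frac{n}{n+1}$, which is worse than $\frac34$ for every $n\ge4$, and no balancing against the trivial bound does better. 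The missing ingredient is the argument of Proposition~\ref{CG5b}. Pass to the linear span $\PP^{n'}$ of $C$, so that every hyperplane meets $C$ in at most $d$ points. For $d>\log B$, the curve bound of \cite{BinCluKat} gives $\#C(\QQ,B)\ll d^2(\log B)^\kappa$ with no factor $B$, which suffices when $d^{n'/(n'+1)}<B$. Otherwise, cover the points of height $\le B$ by $\ll B^{(n'+1)/n'}$ hyperplanes (Lemma~\ref{planecover}), which gives $dB^{(n'+1)/n'}\le d^{(n'+2)/(n'+1)}B\le d^{4/3}B$. This is how the paper obtains the curve row, via Proposition~\ref{CG5}, which is even stronger than $e=\frac34$.

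For $k\ge2$ your idea is right, but the free parameter $D$ and the balancing you call the main obstacle play no role. The factor $d^{2e}$ arises precisely because the input, Proposition~\ref{gendetmeth}, has no parameter. For $(\log B)^N<d<B^E$ it gives $\ll(\log B)^\kappa$ hypersurfaces of degree at most $d$ that do not contain $X$ and jointly cover $X(\QQ,B)$. So each $X\cap h_i$ is a $(k-1)$-fold of degree $\le d^2$, and Propositions~\ref{curveinnzip}, \ref{surfaceinnzip}, \ref{3foldsin5zip} give $(d^2)^{e}B^k(\log B)^\kappa$ at once. The range $d\le(\log B)^N$ is absorbed by Theorem~\ref{CDDG}, which in that range rests on \cite[Theorem 1]{CCDN-dgc}; your Step~1 never addresses this range. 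Large $d$ is handled by the trivial bound. All the genuine difficulty sits inside those sublinear Schwartz--Zippel proofs, which you take for granted: splitting off linear, planar, quadric and cubic components, and balancing against Lemma~\ref{planecover} and Proposition~\ref{planecount}.

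Two smaller points in Step~1. First, a projection with coefficients bounded in terms of $n$ alone cannot be guaranteed, since an irreducible $X$ of large degree may meet every member of any fixed finite family of candidate centres. One accepts a height loss $c_nd^{2(n-k-1)^2}$ as in \cite[Lemma 5.1]{CCDN-dgc}, which $d<B^E$ turns into a $\log B$ factor. Second, once the centre misses $X$, no induction over an exceptional locus is needed, because the pulled-back cones already contain all of $X(\QQ,B)$. What is needed instead is a reduction to geometrically irreducible $X$: take a hypersurface of degree $\le d$ containing one geometric component but not another.
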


    The proofs we give generalize to varieties of arbitrary dimension and codimension when assuming Conjecture \ref{SLSZ}. This would lead to the following improvement to quadratic dimension growth.

    \begin{thm}[Subquadratic projective dimension growth]\label{SLSZtoSQDG}
        Suppose Conjecture \ref{SLSZ} holds for varieties of dimension $k-1$ in $\PP_\QQ^n$. Then there are constants $c=c(n)$ and $\kappa = \kappa(n)$ such that for any $d\ge 4$, any integral variety $X$ of degree $d$ and dimension $k$ in $\PP_\QQ^n$ and any $B>2$ one has $$\#X(\QQ, B) \le cd^{2-\frac{2}{n+1}}B^{k}(\log B)^\kappa.$$
    \end{thm}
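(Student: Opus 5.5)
We follow the Binyamini--Cluckers--Kato strategy behind Theorem \ref{DG}, feeding in the sublinear Schwartz--Zippel bound for the $(k-1)$-dimensional pieces. First we reduce to the hypersurface-like situation by the projection argument used for Theorem \ref{CDDG}. Choose a generic linear projection $\pi:\PP^n\dashrightarrow\PP^{k+1}$, with centre defined over $\ZZ$ by small integer coefficients, that is birational from $X$ onto a hypersurface $\pi(X)$ of degree $d$. The projection sends points of height $\le B$ to points of height $\le cB$. We then count points of $X$ by pulling back the auxiliary hypersurfaces constructed for $\pi(X)$.

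The core step is the determinant method. For each prime $p$ in a suitable range (about $B^{1/d}$, with logarithmic factors) we cover $X(\QQ,B)$ by $O(d^2 B^{k-1}\cdot\text{stuff})$ auxiliary hypersurfaces of degree $O(1)$, or more precisely by $p$-adic residue classes. In each class the points of height $\le B$ lie on a hypersurface $Y$ of controlled degree not containing $X$. Each $X\cap Y$ is then a variety of dimension $k-1$ and degree $O(d)$ in $\PP^n$.

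In \cite{BinCluKat} each such intersection is bounded by the linear Schwartz--Zippel bound, Lemma \ref{SZ}, giving $dB^{k}$ per piece. We instead apply Conjecture \ref{SLSZ} in dimension $k-1$, which is the hypothesis, and get $d^{n/(n+1)}B^{k}$ per piece. We then rebalance the number of pieces against the per-piece cost by choosing the auxiliary degree (equivalently the prime $p$) as a function of $d$. Done by hand, the count is: number of pieces $\approx d^{e}$, each costing $d^{n/(n+1)}B^k$. Optimising should give total $d^{2-2/(n+1)}B^k$, since the old count $d\cdot d$ becomes $d\cdot d^{n/(n+1)}=d^{2-1/(n+1)}$, and the extra $d^{-1/(n+1)}$ saving comes from reoptimising $p$. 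Components of $X\cap Y$ of high degree are handled trivially, and lower-dimensional components are absorbed via Lemma \ref{SZ}.

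The main obstacle is this optimisation. We have to make sure the intersections $X\cap Y$ genuinely satisfy the hypotheses of Conjecture \ref{SLSZ}, namely pure dimension $k-1$ with total degree $\ll d\deg Y$. We also have to make sure the degree–prime tradeoff from the BCK construction really yields the exponent $2-\frac{2}{n+1}$ rather than only $2-\frac{1}{n+1}$. I would first try reproducing the BCK count with parameter $p$ explicit, then substitute the sublinear bound and solve for the optimal $p$. If the exponent is short by $\frac{1}{n+1}$, I would additionally use the sublinear bound for the count of residue classes, by grouping classes lying on a common lower-dimensional variety.
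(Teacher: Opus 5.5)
Your projection to a hypersurface is the right first move, but the core counting step has a genuine gap. It comes from misreading where the $d^2$ in Theorem \ref{DG} comes from.

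The Binyamini--Cluckers--Kato method, as packaged in Proposition \ref{gendetmeth}, does not produce many bounded-degree auxiliary hypersurfaces indexed by residue classes modulo a prime $p\approx B^{1/d}$. Under $(\log B)^N<d<B^E$ it produces only $O((\log B)^\kappa)$ hypersurfaces $h_i$ of degree at most $d$, none containing $X$, jointly containing $X(\QQ,B)$. The primes used lie in $[\log B, M(\log B)^\ell]$, and the number of pieces does not depend on $d$.

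So the $d^2$ is entirely the B\'ezout degree of a \emph{single} intersection $X\cap h_i$, which has pure dimension $k-1$ and degree $\le d^2$. It is not a product of a number of pieces $\approx d$ with a per-piece degree $\approx d$. Your bookkeeping $d\cdot d\to d\cdot d^{n/(n+1)}$ therefore applies the sublinear bound to only one factor of $d$. You then assign the missing saving $d^{-1/(n+1)}$ to a ``reoptimisation of $p$'', but the method has no such free parameter. As written, the proposal does not establish the exponent $2-\frac{2}{n+1}$. Moreover, your stated count of order $d^2B^{k-1}$ auxiliary pieces, each intersection costing $\gg B^k$, would not even give a bound of order $B^k$.

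Once the structure is right, no optimisation is needed, and the argument is exactly the paper's:
\begin{enumerate}
\item Discard $d\ge B^{E}$ with $E=\frac{(n+1)(n+1-k)}{2n}$, using $\#\PP^n(\QQ,B)\ll B^{n+1}\le d^{2-\frac{2}{n+1}}B^k$.
\item Discard $d\le(\log B)^N$ using Theorem \ref{CDDG}, since then $d^2\le d^{2-\frac{2}{n+1}}(\log B)^{2N/(n+1)}$.
\item Otherwise take the $h_i$ from Proposition \ref{gendetmeth}. Apply Conjecture \ref{SLSZ} in dimension $k-1$ with degree $\le d^2$ to each $X\cap h_i$. This gives $(d^2)^{\frac{n}{n+1}}B^k=d^{2-\frac{2}{n+1}}B^k$ per piece; then sum over the $O((\log B)^\kappa)$ pieces.
\end{enumerate}
In other words, it is the proof of Theorem \ref{CDDG} with Lemma \ref{SZ} replaced by the conjecture. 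Your fallback of grouping residue classes is unnecessary.
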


    Notice that there remains a small gap between the exponent of $d$ in the upper bounds of Theorem \ref{SLSZtoSQDG} and in the lower bounds of Proposition \ref{LB}.

\subsection{Affine Varieties}

    The argument used to obtain the result for higher codimension of Theorem \ref{CDDG} can also be applied in different contexts. We illustrate this by proving quadratic dependence on $d$ for all codimensions in Pila's bound for integral points on irreducible affine varieties \cite[Theorem A]{Pila-ast-1995}. This improves upon the polynomial dependence on the degree for arbitrary codimension obtained in \cite[Proposition 2.7]{CHNV-dgc} and quadratic dependence only for hypersurfaces in \cite[Theorem 3]{BinCluKat}.

\begin{thm}[Quadratic Pila's bound for integral points on affine varieties in all codimensions]\label{CDAff}
        Let $n > 1$ be given. There are constants $c = c(n)$ and $\kappa = \kappa(n)$ such that for any $d>0$, any irreducible variety $X$ of degree $d$ in $\AA_\QQ^n$ and any $B > 2$ one has $$\#X(\ZZ, B) \le cd^2B^{\dim X -1 +\frac{1}{d}}(\log B)^\kappa.$$
    \end{thm}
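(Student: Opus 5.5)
We reduce to the hypersurface case, namely \cite[Theorem 3]{BinCluKat}: for an irreducible hypersurface $Y\subset\AA^{m}_\QQ$ of degree $d$, $\#Y(\ZZ,B)\le c(m)d^2B^{m-2+\frac1d}(\log B)^\kappa$. The reduction is a generic linear projection $\pi:\AA^n\to\AA^{k+1}$, with $k=\dim X$, chosen with integer coefficients of size bounded only in terms of $n$. We then bound the fibres of $\pi$ on $X(\ZZ,B)$ separately.

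\textbf{Step 1 (choice of projection).} We look for an integer matrix $A$ of size $(k+1)\times n$ with entries bounded by a constant $C(n)$ such that $\pi_A|_X$ is birational onto its image and $Y=\overline{\pi_A(X)}$ is a hypersurface of degree exactly $d$ in $\AA^{k+1}$. Such projections exist: the condition of being generically injective and preserving degree fails only on a proper Zariski-closed subset of the space of matrices. That bad set is cut out by equations whose degree is bounded polynomially in $d$, so the Schwartz--Zippel Lemma \ref{ASZ} yields a good $A$ among matrices with entries up to $O(d)$. This alone costs a polynomial factor, so we must avoid it. Instead we fix finitely many matrices $A_1,\dots,A_N$ with $N=N(n)$ in general position, chosen so that for every $X$ at least one $A_i$ is good. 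One way is to use the fact that projections from a generic linear center of codimension $k+1$ work, and that any $d$-independent family of centers in general position contains a good one after possibly enlarging the family. I expect this uniformity in $d$ to be the main obstacle. If it fails, a fallback is to allow entries of size $d^{O(1)}$ and absorb them: $\pi_A(X(\ZZ,B))\subset Y(\ZZ, C d^{O(1)}B)$ would then inflate the bound only by $d^{O(1/d)}$ in the exponent term and by $d^{O(k-1)}$ otherwise, which is not acceptable. So the exponent-free route must use generic projections with bounded coefficients, and the claim to establish is that for fixed $n$ a bounded set of integer projections always contains a birational, degree-preserving one. Equivalently, the bad locus for each $X$ is a proper closed set, and a sufficiently large but $d$-independent grid point avoids it. The second formulation looks false uniformly in $d$, so the plan should instead exploit the structure of the bad locus: its irreducible components come from the secant and tangent varieties of $X$, which have bounded dimension. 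The centers to avoid therefore form a subvariety of bounded dimension in the Grassmannian, and a sufficiently generic bounded family avoids it by a dimension count.

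\textbf{Step 2 (counting on the image).} Since $\pi_A(X(\ZZ,B))\subset Y(\ZZ,C(n)B)$ and $Y$ is an irreducible hypersurface of degree $d$ in $\AA^{k+1}$, the hypersurface result gives at most $c\,d^2B^{k-1+\frac1d}(\log B)^\kappa$ image points.

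\textbf{Step 3 (fibres).} Let $Z\subset Y$ be the locus where the fibres of $\pi_A|_X$ are not single points. Over $Y\setminus Z$ the map $X(\ZZ,B)\to Y(\ZZ,CB)$ is injective, so those points are already counted. The exceptional set $\pi_A^{-1}(Z)\cap X$ is a proper closed subset of $X$, of dimension at most $k-1$ and of degree $O(d^{2})$; it sits inside the intersection of $X$ with a hypersurface of degree $O(d)$ coming from a discriminant. We bound its integer points by Lemma \ref{ASZ} together with B\'ezout, which gives $O(d^2B^{k-1})$. This is dominated by $d^2B^{k-1+1/d}$, so it fits the target bound and completes the argument.
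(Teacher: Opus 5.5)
Your Step 1 is a genuine gap, and the repair you sketch at its end does not work. No family of projections bounded in terms of $n$ alone is birational on every $X$. Take nonzero directions $v_1,\dots,v_N\in\ZZ^3$ and distinct primes $p_1,\dots,p_N$ with $P=p_1\cdots p_N$, and let $X\subset\AA^3$ be the curve parametrised by $\phi(t)=\sum_j t^{P/p_j}v_j$. Projecting along $v_j$ removes the term $t^{P/p_j}v_j$, and every remaining exponent $P/p_i$ ($i\neq j$) is divisible by $p_j$. So $\pi_j\circ\phi$ has coordinates in $\QQ[t^{p_j}]$, while $\phi$ does not. Hence $\QQ(\pi_j\circ\phi)\subseteq\QQ(\phi)\cap\QQ(t^{p_j})\subsetneq\QQ(\phi)$, and $\pi_j|_X$ is not birational, for every $j$ simultaneously. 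Your dimension count in the Grassmannian fails because the locus of bad centres has bounded dimension but degree growing with $d$, so it can contain any prescribed finite set of centres. Without birationality your Steps 2 and 3 lose their footing. If $\pi|_X$ has degree $r\ge 2$, then $\deg Y\le d/r$ and generic fibres have $r$ points. The hypersurface bound then gives the exponent $1/\deg Y\ge \frac{r}{d}$ instead of $\frac{1}{d}$, and you have no control on $r$. And, as you note yourself, allowing coefficients of size $d^{O(1)}$ costs a factor $d^{O(k-1)}$ once you count points on $Y$ directly.

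The missing idea is to not count points on the image at all. The paper only needs a projection that preserves the dimension and keeps the degree at least $\sqrt d$; a coordinate projection provides this (Lemma \ref{projectiondegree}). It applies the determinant method to $\pi(X)$ to get $c(\log B)^\kappa$ hypersurfaces $h_i'$ of degree $\le d-1$ containing $\pi(X)(\ZZ,B)$ but not $\pi(X)$, and pulls them back. Then $h_i=\pi^{-1}(h_i')$ contains $X(\ZZ,B)$, does not contain $X$, and still has degree $\le d-1$. This is Proposition \ref{genaffdetmeth}, proved by induction on the codimension, with $N$ doubling at each step so that $(\log B)^N<\deg\pi(X)$ persists. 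B\'ezout and Lemma \ref{ASZ} applied to $X\cap h_i$ in the original coordinates give $cd^2B^{\dim X-1}$ for each $i$. Birationality, fibre sizes and exact degree preservation become irrelevant. Even a height inflation by $d^{O(1)}$ would only enter through $\log B$; this is how Proposition \ref{gendetmeth} absorbs the height loss in \cite[Lemma 5.1]{CCDN-dgc}. The factor $B^{1/d}$ is only needed for $d\le(\log B)^N$. There \cite[Proposition 2.7]{CHNV-dgc} applies, and its polynomial dependence on $d$ is absorbed into $(\log B)^\kappa$. The range $d\ge B^{(n-\dim X+1)/2}$ is trivial.
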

    
\subsection*{Acknowledgements} 
    The author thanks KU Leuven Internal Funds C16/23/010 for partial support. Many thanks go out to Raf Cluckers. Without his excellent guidance and suggestions this paper would not have been possible. The author would also like to thank Matteo Verzobio for interesting discussions on the topics of the paper.

\section{Results for Higher Codimension}
    In this section we use a natural projection argument to generalize the results of the determinant method obtained for hypersurfaces in \cite{BinCluKat} to varieties in arbitrary codimension. These generalizations are used to derive Theorem \ref{CDDG} and Theorem \ref{CDAff}. 
    
    Our method works by first finding a well-behaved projection, then using the determinant method to find auxiliary hypersurfaces for the projection and finally pulling those hypersurfaces back through the projection. This way we obtain suitable auxiliary hypersurfaces for the original variety. We will illustrate two ways one may obtain a suitable projection. The first relies on a result by \cite{CCDN-dgc} to find a projection onto a variety birational to $X$. The second is more elementary and relies on finding a coordinate projection that does not lower the degree of $X$ too much.

    We suspect our technique used to obtain these generalizations to higher codimensions may also be applicable elsewhere. For example to generalize some results of \cite{Mah}, and to sharpen and simplify some results of \cite{Verzobio}.
\subsection{Projective Varieties}

    Let us start by recalling the application of the determinant method from \cite{BinCluKat}. The following statement and proof are a slight adaptation of \cite[Propositions 7 and 9]{BinCluKat} to the homogeneous context. We introduce the variable exponent $E$ in the prerequisites. The proof remains essentially the same. 

    \begin{prop}[{\cite{BinCluKat}}]\label{detmeth}
        Let $X\subset \PP_\QQ^n$ be an irreducible hypersurface of degree $d>0$ for some $n>1$ and let $B>2$. There exist constants $N=N(n), c=c(n), \kappa = \kappa(n)$ such that for any $E>0$ if $$1 \ll_{n,E} (\log B)^N <d<B^E$$ there exist no more than $c(\log B)^\kappa$ many homogeneous polynomials $g_i$ of degree $d-1$ whose joint zero locus contains $X(\QQ, B)$. It follows that $X\not\subset V(g_i)$.
    \end{prop}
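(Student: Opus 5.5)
We plan to follow the $p$-adic determinant method of \cite[Propositions 7 and 9]{BinCluKat}, transcribed from the affine to the homogeneous setting. Write $X=V(f)$ with $f\in\ZZ[x_0,\dots,x_n]$ primitive, irreducible and homogeneous of degree $d$. Points of $X(\QQ,B)$ are represented by primitive integer vectors in the box $[-B,B]^{n+1}$. The strategy is to cover $X(\QQ,B)$ by few $p$-adic "residue classes" for a suitable prime $p$, and to produce one polynomial $g_i$ of degree $d-1$ per class.

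\textbf{Step 1: choosing the prime.} First we choose a prime $p$ of size roughly $(\log B)^{O(1)}$ such that $X$ mod $p$ remains geometrically irreducible and of degree $d$. By effective Noether/Bertini-type results (as used in \cite{BinCluKat}) the bad primes divide a nonzero integer of height bounded polynomially in terms of $d$ and $\log\|f\|$. We will also need a bound on $\|f\|$: if $X(\QQ,B)$ is not already contained in a degree $d-1$ hypersurface (in which case we are done), Siegel-type arguments let us assume $\log\|f\|\ll d\log B$. With $d<B^E$, there is then a good prime $p\ll_{n,E}(\log B)^{N'}$.

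\textbf{Step 2: partitioning the points.} Next we partition $X(\QQ,B)$ according to the reduction modulo $p$, keeping track of smooth versus singular $\mathbb F_p$-points of the reduction. The number of classes is about $O(dp^{n-1})$ by Lang--Weil, but we cannot afford a factor of $d$. Following \cite{BinCluKat}, we therefore group the classes and use the lifting/Hensel structure so that the number of auxiliary polynomials is $O(p^{n-1})\cdot(\log B)^{O(1)}=c(\log B)^\kappa$. Alternatively, we choose $p$ slightly larger and work with a single class per $\mathbb F_p$-point of a suitable projection; this is the point where the homogeneous version requires care.

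\textbf{Step 3: the determinant estimate.} For each class, consider the monomials of degree $d-1$ in $n+1$ variables, whose number is $s\asymp d^{n}/n!$, and restrict them to the points of the class. The determinant of the matrix of evaluations at any $s$ such points has archimedean size at most $(s^{1/2}B^{d-1})^{s}$. On the other hand, the $p$-adic Taylor expansion along the $(n-1)$-dimensional smooth germ gives divisibility by $p^{e}$ with $e\gg s^{1+1/(n-1)}$, with the usual adjustment for singular residues. Because $d\ge (\log B)^N$, the $p$-adic gain dominates the archimedean bound $s(d-1)\log B$ once $N$ is large enough relative to the exponent of $p$. Hence the determinant vanishes, and a nonzero $g_i$ of degree $d-1$ vanishes on the class. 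Since $\deg g_i<d$ and $f$ is irreducible of degree $d$, we get $f\nmid g_i$, so $X\not\subset V(g_i)$.

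\textbf{Main obstacle.} The delicate step is the $p$-adic counting in Step 3. We must show that the $p$-adic valuation exceeds $s(d-1)\log B/\log p$ uniformly, including for points that are singular mod $p$. We must also keep the number of classes at $(\log B)^{\kappa}$ rather than $d p^{n-1}$; this is exactly where the lower bound $(\log B)^N<d$ is used. The proof would track the constants in \cite{BinCluKat}, with $E$ entering only through the size of $p$.
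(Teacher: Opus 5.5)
Your proposal has two genuine gaps. The first is in Step 1, which fails as stated and is also unnecessary. Effective Noether only says that the primes $p$ for which $X_p$ is not geometrically irreducible divide a nonzero integer $D$, with $\log|D|$ polynomial in $d$ and $\log\|f\|$. If all primes up to $x$ are bad, then $\log|D|\gtrsim x$, so the least good prime can only be guaranteed to be $\ll\log|D|$, i.e.\ of size $d^{O(1)}\log B$. This holds even granting your bound on $\log\|f\|$. Since $d$ may be as large as $B^E$, this is a power of $B$. So ``$d<B^E$, hence there is a good prime $p\ll(\log B)^{N'}$'' is a non sequitur, and the resulting $\gg p^{n-1}$ residue classes would destroy the bound $c(\log B)^\kappa$.

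The paper's argument (following \cite{BinCluKat}) never uses good reduction. For an arbitrary prime $p$ and the class of points reducing to $P\in X_p(\mathbb{F}_p)$, \cite[Corollary 2.5]{CCDN-dgc} gives $v_p(\Delta)\ge\bigl(\frac{(n-1)!}{\mu_P}\bigr)^{\frac{1}{n-1}}\frac{n-1}{n}s^{1+\frac{1}{n-1}}-as$, where $\mu_P$ is the multiplicity of $P$ on $X_p$. Your worry about $dp^{n-1}$ classes is also misplaced. There are at most $\#\PP^n(\mathbb{F}_p)\ll p^n$ classes, which is polylogarithmic as soon as $p$ is, and the hypothesis $(\log B)^N<d$ plays no role in counting them.

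The more serious gap is the ``usual adjustment for singular residues'' in Step 3: with a single small prime there is none. If $\mu_P\asymp d$, the bound above only gives $v_p(\Delta)\asymp sd$. Then $v_p(\Delta)\log p\asymp sd\log\log B$, far below the archimedean size $\asymp sd\log B$. Such classes can contain many points of $X(\QQ,B)$; think of the vertex of a cone, which has multiplicity $d$ on every $X_p$. Your plan gives no auxiliary polynomial for them.

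The missing idea is to use all primes $p_i\in[\log B,M(\log B)^\ell]$ simultaneously:
\begin{itemize}
\item Let $\Xi_s$ be the set of points whose reduction has multiplicity $\ge d/(\log B)^\alpha$ modulo \emph{every} $p_i$. By \cite[Proposition 2]{BinCluKat}, every interpolation determinant on $\Xi_s$ with monomials of degree $d'=(\log B)^C$ is divisible by every $p_i$.
\item Once $\ell$ is large, $\log\prod_i p_i\asymp M(\log B)^\ell$ exceeds the archimedean size $\ll(\log B)^{C(n+1)+1}$ of that determinant. Hence $\Xi_s$ lies on a single hypersurface of degree $d'$. This is where $(\log B)^N<d$ is really used, namely to get $d'<d$.
\item Every other point reduces, modulo some $p_i$, to a point $P$ with $\mu_P<d/(\log B)^\alpha$. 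For that class the bound above gives $v_{p_i}(\Delta)\gg sd(\log B)^{\alpha/(n-1)}$.
\item So $v_{p_i}(\Delta)\log p_i$ exceeds $s\bigl((d-1)\log B+\log s\bigr)$ once $\alpha\ge n-1$ and $B\gg_{n,E}1$. Here $d<B^E$ is used only to ensure $\log s\ll_{n,E}\log B$.
\end{itemize}
Taking one polynomial for $\Xi_s$ and one for each pair $(p_i,P)$ gives $\ll(\log B)^\kappa$ polynomials in total.
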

    \begin{proof}
        Consider the primes $p_i$ between $\log B$ and $M (\log B)^\ell$ for some large $M(n)$ and $\ell(n)$. Let $\Xi_s$ be the subset of $X(\QQ, B)$ consisting of those points whose reduction modulo each of these primes $p_i$ has multiplicity at least $\frac{d}{(\log B)^\alpha}$ for some large $\alpha(n)$. Then any interpolation determinant with monomials of degree $d':= (\log H)^C$ on the points of $\Xi_s$ is divisible by all these primes $p_i$, for some constant $C = C(n)$, by \cite[Proposition 2]{BinCluKat}. By a calculation as for the high multiplicity points in the proof of \cite[Proposition 7]{BinCluKat}, one finds that $\Xi_s$ lies inside the zero locus of a single polynomial of degree $d'$, and, one has $d'<d$ since $(\log B)^N <d$.

        Let us now consider the complement of $\Xi_s$ inside $X(\QQ, B)$, and write it as a finite union of subsets $\Xi_P$ for each of our primes $p_i$ and each low multiplicity point $P$ on $X_{p_i}$, the reduction of $X$ modulo $p_i$. Here low multiplicity means $\frac{d}{(\log B)^\alpha}$. Let $\Delta$ be an interpolation determinant with monomials of degree $d-1$ and points in $\Xi_P$. Write $s$ for the number of monomials of degree $d-1$ in $n+1$ variables.

        By \cite[Corollary 2.5]{CCDN-dgc}, we find that $\Delta$ is divisible by $p_i^e$ with $$e \ge \left(\frac{(n-1)!}{\mu_P}\right)^\frac{1}{n-1}\frac{n-1}{n}s^{1+\frac{1}{n-1}}-as$$ for some constant $a=a(n)$ and with $\mu_P$ the multiplicity of $P$ on $X_{p_i}$. Hence $\Delta$ must vanish by a similar computation as for the low multiplicity points in the proof of \cite[Proposition 7]{BinCluKat}. Now we are done by taking the union over all $p_i$ and all $P$ on $X_{p_i}$ of low multiplicity, similarly as for the proof of \cite[Proposition 7]{BinCluKat}.
    \end{proof}
    
    Using a projection argument, we can prove the following generalization of Proposition \ref{detmeth}. We will use this in this section to obtain Theorems \ref{CDDG} and \ref{SLSZtoSQDG}, but Proposition \ref{gendetmeth} will also be important to derive certain sublinear Schwartz-Zippel and subquadratic dimension growth results later on.

    \begin{prop}\label{gendetmeth}
        Let $X\subset \PP_\QQ^n$ be an irreducible variety of dimension $m$ and degree $d>0$ for some $n>m \ge 1$ and let $B>2$. For any $E>0$ there exist constants $N=N(n,m), c=c(n,m, E), \kappa = \kappa(n,m)$ such that if $$1 \ll_{n,E} (\log B)^N <d<B^E$$ there exist no more than $c(\log B)^\kappa$ many hypersurfaces $h_i$ of degree $\le d$ who jointly contain $X(\QQ, B)$ and such that $X \not\subset h_i$.
    \end{prop}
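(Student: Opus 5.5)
The idea is to reduce to the hypersurface case, Proposition \ref{detmeth}, through a linear projection, and then pull the auxiliary hypersurfaces back. Choose a linear projection $\pi:\PP^n \dashrightarrow \PP^{m+1}$, with centre a linear space $L$ of dimension $n-m-2$ disjoint from $X$. Take it defined over $\ZZ$ by integer linear forms $\ell_0,\dots,\ell_{m+1}$ whose coefficients are bounded by a constant $c_0(n,m)$. We require that $\pi|_X$ is birational onto its image $Y=\overline{\pi(X)}$, so that $Y$ is an irreducible hypersurface in $\PP^{m+1}$ of degree exactly $d$.

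The existence of such a bounded-height projection is what I expect to be the main obstacle. A generic projection is birational, but we need one with integer coefficients bounded independently of $d$ and of $X$. The paper points to a result in \cite{CCDN-dgc} that supplies exactly this, and I would invoke it. As a fallback I would use the more elementary route also mentioned in the paper. There one iterates coordinate-type projections (after a bounded linear change of coordinates) and keeps track of the loss in degree, accepting $\deg Y \ge d/c$ with $\pi|_X$ generically finite of bounded degree. Since the hypersurfaces we produce have degree $\deg Y-1 \le d$, a degree that drops by a constant factor is harmless, provided $\deg Y$ still satisfies the hypothesis of Proposition \ref{detmeth}. Adjusting $N$ and the implied constants handles this.

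With $\pi$ fixed, a point $x\in X(\QQ,B)$ with integer coordinates of size $\le B$ maps to $\pi(x)=(\ell_0(x):\dots:\ell_{m+1}(x))$. Its coordinates are integers of size $\le c_1B$, with $c_1=(n+1)c_0$, and they are not all zero because $L\cap X=\emptyset$. Hence $\pi(X(\QQ,B))\subset Y(\QQ,c_1B)$. Set $B'=c_1B$ and apply Proposition \ref{detmeth} to $Y$ with $B'$. For $B$ large, $(\log B')^{N}\asymp(\log B)^N$ and $d<B^E\le B'^E$, so the hypothesis $(\log B')^N<d<B'^E$ holds after enlarging $N$ slightly; the finitely many small $B$ are absorbed into the $\ll_{n,E}$ condition. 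This produces at most $c(\log B)^\kappa$ homogeneous polynomials $g_i$ in $m+2$ variables, each of degree $\deg Y-1\le d-1$, whose common zeros contain $Y(\QQ,B')$, and with $Y\not\subset V(g_i)$.

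Finally we pull back: set $h_i = g_i(\ell_0,\dots,\ell_{m+1})$. This is a homogeneous polynomial on $\PP^n$ of degree $\le d$. Every $x\in X(\QQ,B)$ satisfies $g_i(\pi(x))=0$, hence $h_i(x)=0$, so the $h_i$ jointly contain $X(\QQ,B)$. It remains to see that $X\not\subset V(h_i)$. If $X\subset V(h_i)$, then $\pi(X)\subset V(g_i)$, and taking closures gives $Y\subset V(g_i)$, which is a contradiction. This step only uses that $\pi|_X$ is dominant onto $Y$, so the same argument works in the fallback, non-birational version. The number of $h_i$ is at most $c(\log B)^\kappa$, with constants depending on $n,m,E$ through $c_0$ and Proposition \ref{detmeth} applied in dimension $m+1$.
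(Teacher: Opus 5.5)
Your architecture is the paper's: project to a hypersurface in $\PP^{m+1}$, apply Proposition \ref{detmeth}, and pull back via $h_i=g_i(\ell_0,\dots,\ell_{m+1})$. The pull-back and non-containment steps are correct. The gap is the projection you start from. You ask for integer linear forms with coefficients bounded by $c_0(n,m)$, independent of $d$ and of $X$, whose centre $L$ misses $X$ and with $\pi|_X$ birational, and such a projection need not exist. For fixed $c_0$ there are only finitely many candidate centres. For instance, when $n=3$, $m=1$ they all lie in $\PP^3(\QQ,6c_0^3)$, since the kernel of a $3\times 4$ integer matrix is spanned by its $3\times 3$ minors. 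A rational curve of any sufficiently large degree $d$ can be made to pass through all of these points, so every admissible centre meets $X$. Nor does \cite[Lemma 5.1]{CCDN-dgc} give what you attribute to it. The projection it produces has height growing polynomially in $d$, and one only gets $\pi(X(\QQ,B))\subset X'(\QQ,c_nd^{2(n-m-1)^2}B)$.

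The missing idea is that this loss is harmless exactly because of the hypothesis $d<B^E$. It gives $\log(c_nd^{2(n-m-1)^2}B)\le c(n,m,E)\log B$, so Proposition \ref{detmeth} at height $c_nd^{2(n-m-1)^2}B$, with $N$ enlarged as you describe, still produces $\ll_{n,m,E}(\log B)^\kappa$ polynomials. This is where the upper bound $d<B^E$ and the $E$-dependence of $c$ genuinely enter the proof. In your argument $d<B^E$ does no real work, which is a symptom of the problem.

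Your fallback is not justified either. You give no argument that some bounded coordinate-type projection has $\deg Y\ge d/c$. The elementary argument of Lemma \ref{projectiondegree} only gives $\deg Y\ge\sqrt d$ per step, hence $\deg Y\ge d^{2^{-(n-m-1)}}$ overall. That weaker bound would in fact suffice, since the statement only asks for degree $\le d$ and one may replace $N$ by $2^{n-m-1}N$. But you would then also have to handle the points of $X(\QQ,B)$ at which a coordinate projection is undefined, because its centre lies on $X$.

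Finally, the paper first disposes of the case where $X$ is irreducible but not geometrically irreducible, before invoking the projection lemma. There a single hypersurface suffices: one containing a Galois conjugate $Y^\sigma$ of a geometric component $Y$ but not $Y$ itself. Your write-up does not treat this case.
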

    \begin{proof}
        If $X$ is irreducible but not geometrically irreducible then let $K/\QQ$ be a finite Galois extension such that $X$ splits into absolutely irreducible components over $K$. Then since these components are Galois-conjugate, the rational points of $X$ lie in each of these components. Take such a component $Y$. Now let $\sigma$ be a field automorphism of $\overline{\QQ}$ and $Y^\sigma$ the corresponding conjugate variety such that $Y \ne Y^\sigma$. Then there exists a hypersurface $h$ of degree at most $d$ such that $Y \not\subset h$ but $Y^\sigma \subset h$. Hence $X \not\subset h$ and $X(\QQ, B) \subset h$, since $Y(\QQ) \subset Y^\sigma(\QQ)$.
    
        Now assume $X$ is geometrically irreducible. By \cite[Lemma 5.1]{CCDN-dgc}, there is a linear projection $\pi: \PP^n \rightarrow \PP^{m+1}$ such that $X' = \pi(X)$ is an irreducible hypersurface of degree $d$ and $\pi(X(\QQ, B)) \subset X'(\QQ, c_nd^{2(n-m-1)^2}B)$. Proposition \ref{detmeth} now yields no more than $$c_n (\log (c_nd^{2(n-m-1)^2}B))^\kappa < c (\log (c_nB^{2E(n-m-1)^2+1}))^\kappa \le c'(n,m, E)(\log B)^\kappa$$ hypersurfaces $h'_i = V(g_i)$ of degree $d-1$ whose joint zero locus contains $ X'(\QQ, cd^{2(n-m-1)^2}B)$ and since $X'$ is an irreducible hypersurface of degree $d$, $X' \not\subset h'_i$.

        Taking $h_i := \pi^{-1}(h'_i)$ yields the result.
    \end{proof}

    Proposition \ref{gendetmeth} can now be used to obtain Theorems \ref{CDDG} and \ref{SLSZtoSQDG}.

    \begin{proof}[Proof of Theorem \ref{CDDG}]
        Let $m = \dim X$. We may assume that $d < B^{\frac{n-m+1}{2}}$ since if $d \ge B^{\frac{n-m+1}{2}}$ we have $$\#X(\QQ, B) \le \#\PP^n(\QQ, B) \le cB^{n+1}\le cd^2B^m.$$ We may further assume $(\log B)^N < d$ since if $(\log B)^N \ge d$ the result follows from \cite[Theorem 1]{CCDN-dgc} for $d\ge5$. For $d=4$, one can combine the projection argument from \cite[Proposition 4.3.2]{CCDN-dgc} with \cite[Theorem 4]{BinCluKat}.

        Now, assume $(\log B)^N < d$. Let $h_i$ be the degree $d-1$ hypersurfaces from Proposition \ref{gendetmeth}. By Bézout's Theorem, and since the $h_i$ intersect $X$ properly, each intersection $X_i$ of $X$ with $h_i$ is a variety of dimension $m-1$ and degree $\le d(d-1)$. Now by the Schwartz-Zippel bound Lemma \ref{SZ} we find $$\#X_i(\QQ, B) \le cd(d-1)B^m.$$ The result now follows by taking the union over all $X_i$, since by Proposition \ref{gendetmeth} there are no more than $c(\log B)^\kappa$ many $X_i$.
    \end{proof}

    Theorem \ref{SLSZtoSQDG} follows by the same argumentation, using Conjecture \ref{SLSZ} instead of Lemma \ref{SZ}.

\subsection{Affine Varieties}

    \cite[Lemma 5.1]{CCDN-dgc} is a rather heavy result, but the results of this section can also be obtained by a more elementary argumentation using coordinate projections. We illustrate this by proving Theorem \ref{CDAff}.

    We need the following lemma to guarantee the degree does not drop too much under a coordinate projection.

    \begin{lem}\label{projectiondegree}
        Let $X \subset \AA^n_\QQ$ be an $m$-dimensional irreducible variety of degree $d$, with $m \le n-2$. Then there exists a coordinate projection $\pi_i: \AA^n \rightarrow \AA^{n-1}$ such that $\pi_i(X)$ is an $m$-dimensional irreducible variety of degree $d'$ and $\sqrt{d} \le d' \le d$.
    \end{lem}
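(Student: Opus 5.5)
The plan is to compare $X$ with the intersection of two cylinders over its projections, and then apply a Bézout-type inequality to that intersection. Throughout, $\pi_i(X)$ means the Zariski closure of the image, which is irreducible because $X$ is.

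First I would show that at least two coordinate projections keep the dimension of $X$ equal to $m$. If $\dim \pi_i(X) = m-1$, then the fibres of $\pi_i|_X$ over a dense set are whole lines in the $e_i$-direction. Since $X$ is closed and irreducible, this forces $X = \pi_i^{-1}(\pi_i(X))$, so $X$ is a cylinder in the direction $e_i$. Let $S$ be the set of coordinate directions in which $X$ is a cylinder. Then $X$ is, up to reordering coordinates, of the form $X'' \times \AA^{S}$, so $|S| \le m$. Because $m \le n-2$, there are at least two indices $i \ne j$ with $\dim \pi_i(X) = \dim \pi_j(X) = m$. Write $Y_i = \pi_i(X)$ and $Y_j = \pi_j(X)$, with degrees $d_i$ and $d_j$.

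For the upper bound $d_i \le d$, I would use the fact that linear projection does not increase degree. A general linear space $L \subset \AA^{n-1}$ of codimension $m$ meets $Y_i$ in $d_i$ points. Its preimage $\pi_i^{-1}(L)$ is a linear space of codimension $m$ in $\AA^n$, and it meets $X$ in at least one point over each of those $d_i$ points. Since $\pi_i|_X$ is generically finite, this intersection is finite for general $L$, and so it has at most $d$ points. Hence $d_i \le d$, and likewise $d_j \le d$.

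For the lower bound, consider the cylinders $C_i = \pi_i^{-1}(Y_i)$ and $C_j = \pi_j^{-1}(Y_j)$. Each is irreducible of dimension $m+1$, with $\deg C_i = d_i$ and $\deg C_j = d_j$. The variety $X$ lies in $C_i \cap C_j$, and $X$ is an irreducible component of this intersection. To see this, suppose an irreducible $Z$ with $X \subsetneq Z \subset C_i \cap C_j$ existed. Then $\dim Z = m+1$, so $Z = C_i$ and also $Z = C_j$. That would make $Z$ a cylinder in both the $e_i$- and $e_j$-directions, so $\pi_i(Z) = Y_i$ would have dimension $m$, while $Z \supset X$ with $Z$ of dimension $m+1$. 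In particular $Z$ would be a cylinder in the $e_j$-direction containing $X$, so $\pi_j(Z)$ would have dimension $m$. But $\pi_j(Z) = \pi_j(C_j) = Y_j$, and $C_i = C_j$ is also a cylinder in the $e_j$-direction, which makes $Y_i = \pi_i(C_j)$ a cylinder in the $e_j$-direction as well. Pulling this back, $X$ would be contained in a cylinder structure forcing $\dim Y_j$ or $\dim Y_i$ to differ from $m$. I expect this bookkeeping can be arranged cleanly; alternatively, one can simply note that $C_i \cap C_j$ has dimension $m$ as soon as $C_i \ne C_j$, and $C_i \ne C_j$ is exactly what this argument rules out.

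Once $X$ is known to be a component, the refined Bézout inequality (Fulton, Example 8.4.6) applies. It states that the sum of the degrees of the irreducible components of an intersection, even a non-proper one, is at most the product of the degrees of the intersected varieties. Applied to $C_i \cap C_j$, it gives
$$d \le d_i d_j \le \max(d_i,d_j)^2.$$
Choosing the index realising the maximum then gives $\sqrt{d} \le d' \le d$.

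The main obstacle is this last step. The intersection $C_i \cap C_j$ need not be proper, so ordinary Bézout does not apply, and one has to invoke the refined version. One also has to make sure the degree of $X$ is actually bounded by it, which is why the component check above matters. If the component check becomes messy, I would instead intersect with a general codimension-$m$ linear space. This reduces to counting points: a general fibre point of $X$ over $Y_i$ is determined by its images in $Y_i$ and $Y_j$, which bounds $d$ by the number of pairs, namely $d_i d_j$.
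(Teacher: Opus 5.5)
There is a genuine gap in the lower bound. You pick an arbitrary pair $i\neq j$ of dimension-preserving indices, and you then need $X$ to be a component of $C_i\cap C_j$, i.e.\ you need $C_i\neq C_j$. Your attempted contradiction from $Z=C_i=C_j$ never closes, and the final sentence simply asserts $C_i\neq C_j$. This assertion is false in general.

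Take $X=V(x_3,\,x_1x_2-1)\subset\AA^3_\QQ$. This is a conic with $n=3$, $m=1$, $d=2$, and it is a cylinder in no coordinate direction. Forgetting $x_1$ or forgetting $x_2$ preserves dimension, but both images are the line $\{x_3=0\}$. Hence $C_1=C_2=\{x_3=0\}$ and $d_1d_2=1<2=d$. For this pair the inequality $d\le d_id_j$ is itself false, so no refinement of the component check can produce it. Your fallback point count fails for the same reason: $\pi_i(X\cap L)$ only lies in $Y_i\cap\pi_i(L)$. Since $\pi_i(L)$ has codimension $m-1$, this set is a curve, not a set of $d_i$ points, so the pairs $(\pi_i(p),\pi_j(p))$ are not confined to a set of size $d_id_j$.

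The missing idea is that the pair must be chosen. Let $T$ be the set of dimension-preserving indices; your count $|S|\le m$ is fine and gives $|T|\ge 2$. The cylinders $C_i$, $i\in T$, cannot all coincide. A common $C$ would be invariant under translation by $e_i$ for each $i\in T$, by construction. It would also be invariant under translation by $e_s$ for each $s\in S$: $X$ is, hence so are $\pi_i(X)$ and $C_i$ for $i\notin S$. Then $C=\AA^n$, contradicting $\dim C=m+1\le n-1$.

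So choose $i,j\in T$ with $C_i\neq C_j$. Two distinct irreducible $(m+1)$-dimensional varieties meet in dimension $\le m$, so $X$ is a component of $C_i\cap C_j$. Refined B\'ezout then gives $d\le d_id_j$, as you intended.

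This is exactly how the paper argues. It first projects away the cylinder directions, and then observes that the $X_i=\pi_i^{-1}(\pi_i(X))$ cannot all be equal, since otherwise they would contain all of $\AA^n$. Your upper bound $d_i\le d$ is fine.
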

    \begin{proof}
        First assume no coordinate projection lowers the dimension. Since coordinate projections retain irreducibility, $\pi_1(X), \dots ,\pi_n(X)$ are irreducible of dimension $m$, let $d_1,\dots,d_n$ denote their respective degrees. Now $X_i :=\pi_i^{-1}(\pi_i(X))$ is irreducible of dimension $m+1$ and of degree $d_i$ for each $i$. Suppose all $X_i$ are equal, then by their definitions they must contain all of $\AA^n$, contradicting that their dimension is $m+1$. Hence $X_i\ne X_j$ for some $i,j$. Hence $X_i \cap X_j$ is of dimension $m$ and degree $\le d_id_j$. So since $X \subset X_i \cap X_j$ we must have $d \le \deg(X_i \cap X_j) \le d_id_j$, such that $d_i$ or $d_j$ is at least $\sqrt{d}$. It is trivial that $d_i,d_j \le d$.

        Now consider the general case. After renaming the coordinates we may assume $\pi_1,\dots,\pi_k$ do not lower the dimension and $\pi_{k+1},\dots,\pi_n$ do. Then $X' :=(\pi_{k+1} \circ \dots\circ \pi_n)(X)$ is an irreducible variety of degree $d$ and dimension $m-n+k$ in $\AA^k$ and no coordinate projection lowers the dimension. Hence by the first part there exists $\pi_i$ with $i\le k$ such that $\pi(X')$ has dimension $m-n+k$ and degree $\ge \sqrt{d}$. But then we find $\deg(\pi_i(X)) \ge \deg((\pi_{k+1} \circ \dots\circ \pi_n \circ \pi_i)(X)) = \deg((\pi_i\circ\pi_{k+1} \circ \dots\circ \pi_n)(X)) \ge \sqrt{d}.$ This concludes the proof.       
    \end{proof}

    We may now generalize \cite[Proposition 9]{BinCluKat} to arbitrary codimension.

    \begin{prop}\label{genaffdetmeth}
        Let $X \subset \AA^n_\QQ$ be an irreducible $m$-dimensional variety of degree $d > 0$ for some $n>m\ge1$ and let $B >2$. There exist constants $N = N(n, m)$, $c= c(n, m)$ and $\kappa = \kappa(n,m)$ such that for any $E>0$, if $$1 \ll_{n,E} (\log B)^N <d<B^E$$ for some constant $N = N(n, m)$, there exist no more than $c(\log B)^\kappa$ many hypersurfaces $h_i$ of degree $\le d-1$ who jointly contain $X(\ZZ, B)$ and such that $X \not \subset h_i$.
    \end{prop}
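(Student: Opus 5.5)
We argue by induction on the codimension $n-m$, using the coordinate projections of Lemma \ref{projectiondegree} to reduce to the hypersurface case, which is \cite[Proposition 9]{BinCluKat}. The base case $m=n-1$ is exactly that proposition. For integral points it needs no height-distortion bookkeeping: a coordinate projection sends $X(\ZZ,B)$ into $\pi(X)(\ZZ,B)$, since coordinates of size $\le B$ stay of size $\le B$.

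For $m\le n-2$, Lemma \ref{projectiondegree} gives a coordinate projection $\pi_i:\AA^n\to\AA^{n-1}$ such that $X'=\pi_i(X)$ is irreducible of dimension $m$ and degree $d'$ with $\sqrt d\le d'\le d$. We then check that $X'$ satisfies the hypotheses with adjusted constants:
\begin{itemize}
\item $d'\le d<B^E$;
\item $d'\ge\sqrt d>(\log B)^{N/2}$, so it suffices to choose $N(n,m)\ge 2N(n-1,m)$;
\item the implicit condition $1\ll_{n,E}(\log B)^N$ is inherited.
\end{itemize}
By induction there are at most $c(n-1,m)(\log B)^{\kappa(n-1,m)}$ hypersurfaces $h'_j\subset\AA^{n-1}$ of degree $\le d'-1\le d-1$ that jointly contain $X'(\ZZ,B)\supset\pi_i(X(\ZZ,B))$, with $X'\not\subset h'_j$.

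Put $h_j=\pi_i^{-1}(h'_j)$. This is defined by the same polynomial read in $n$ variables, so it is a hypersurface of the same degree $\le d-1$. It contains $X(\ZZ,B)$, and $X\not\subset h_j$, since otherwise $X'=\pi_i(X)\subset h'_j$. This gives the statement with $c(n,m)=c(n-1,m)$ and $\kappa(n,m)=\kappa(n-1,m)$. Alternatively, one can iterate Lemma \ref{projectiondegree} directly down to a hypersurface in $\AA^{m+1}$ of degree $\ge d^{2^{-(n-m-1)}}$ and apply \cite[Proposition 9]{BinCluKat} once, taking $N(n,m)=2^{n-m-1}N(m+1)$.

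Two points need care.
\begin{itemize}
\item \emph{Geometric irreducibility.} Lemma \ref{projectiondegree} and the hypersurface result may need $X$ geometrically irreducible. If it is not, we handle it as in the proof of Proposition \ref{gendetmeth}. The rational points lie on the intersection of $X$ with a Galois conjugate of one of its geometric components. That intersection is cut out by a single hypersurface $h$ of degree $\le d-1$ not containing $X$; we take the $\QQ$-norm or a generic combination of equations and check the degree bound. This one hypersurface suffices.
\item \emph{The degree drop to $\sqrt d$ (main obstacle).} This drop is what forces enlarging $N$, and it changes the lower-bound condition. I expect this to be the main difficulty: we must ensure that the hypothesis of \cite[Proposition 9]{BinCluKat} really depends only on a lower bound $(\log B)^{N}<d'$. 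Degrees of the output $h_i$ are bounded by $d'-1\le d-1$, which is all the statement requires, so a drop in degree does no harm beyond this.
\end{itemize}
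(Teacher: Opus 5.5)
Your proposal is correct and is essentially the paper's own proof. Both argue by induction on $n-m$ using the coordinate projection of Lemma~\ref{projectiondegree}, pull back the auxiliary hypersurfaces through $\pi$, and absorb the degree drop $d'\ge\sqrt d$ by doubling $N$ at each step (the paper records $N(n,m)=2^{n-m-1}N(n)$).

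One correction to your aside on geometric irreducibility, which the paper does not address in this proof: drop the $\QQ$-norm option. The norm of an equation that vanishes on one conjugate component vanishes on every conjugate, hence on all of $X$. What works instead is to expand such an equation (of degree at most $d/2$) in a $\QQ$-basis of its field of definition and take a suitable coefficient polynomial; it vanishes on the intersection of all conjugates but not on $X$.
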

    \begin{proof}
        We proceed by induction on the codimension $n-m$. For $n-m=1$ the result holds by slightly adapting \cite[Proposition 7, Proposition 9]{BinCluKat} similarly as in Proposition \ref{detmeth}. Now assume the result holds for $n-m = k$. Let $n-m=k+1$. Then by Lemma \ref{projectiondegree} there is a coordinate projection $\pi$ such that $\pi(X)$ is an irreducible $m$-dimensional variety of degree $d'$ with $\sqrt{d}\le d' \le d$ hence by the induction hypothesis there are no more than $c(\log B)^\kappa$ many hypersurfaces $h_i'$ of degree $\le d-1$ who jointly contain $\pi(X)(\ZZ, B)$ and such that $\pi(X) \not \subset h_i'$. Now the result follows by letting $h_i := \pi^{-1}(h_i')$, since then also $X \not\subset h_i$.
    \end{proof}

    Note that one may take $N(n,m) = 2^{n-m-1}N(n)$ with $N(n)$ from \cite[Proposition 9]{BinCluKat}. 

    We are now ready to prove Theorem \ref{CDAff}. The proof is based on that of \cite[Theorem 3]{BinCluKat}.

    \begin{proof}[Proof of Theorem \ref{CDAff}]
        Let $m = \dim X$. We may assume that $d < B^\frac{n-m+1}{2}$ since if $d \ge B^\frac{n-m+1}{2}$ we have $$\#X(\ZZ, B) \le \#\AA^n(\ZZ, B) \le cB^n \le cd^2B^{m-1}.$$ We may further assume $(\log B)^N<d$ since otherwise the result follows from \cite[Proposition 2.7]{CHNV-dgc}. Now let $h_i$ be the hypersurfaces from Proposition \ref{genaffdetmeth}. By Bézout's Theorem, and since the $h_i$ intersect $X$ properly, each intersection $X_i$ of $X$ with $h_i$ is a variety of dimension $m-1$ and degree $\le d^2$. Now by the Schwartz-Zippel bound Lemma \ref{ASZ} we find $$\#X_i(\QQ, B) \le cd^2B^{m-1}.$$ The result now follows by taking the union over all $X_i$, since by Proposition \ref{genaffdetmeth} there are no more than $c(\log B)^\kappa$ many $X_i$.
    \end{proof}
    
    Notice that both projection arguments lead to an increased value of $\kappa$ for higher codimension. In the birational approach, this is due to an increased number of auxiliary hypersurfaces needed to deal with the dependency on the degree in \cite[Lemma 5.1]{CCDN-dgc}. In the approach using coordinate projections, this is due to a higher value of $N$ needed to deal with a potential lowering of the degree under coordinate projections. However, this last approach may be improved to yield a lower value of $\kappa$ by proving a stronger version of Lemma \ref{projectiondegree} with higher bounds for $d'$ when the codimension of $X$ is larger. It may also be possible to strengthen the dependence on the degree in \cite[Lemma 5.1]{CCDN-dgc} by employing the idea behind sublinear projective Schwartz-Zippel.

\section{Sublinear Schwartz-Zippel and Subquadratic Dimension Growth}
    The main results of this section are partial proofs of Conjecture \ref{SLSZ} and proofs of subquadratic dimension growth for varieties of low dimension. In the first subsection we use results from the geometry of numbers to prove that Conjecture \ref{SLSZ} holds for unions of linear varieties and that the dependence on the degree is necessarily optimal. In the remainder of the section we take turns proving sublinear Schwartz-Zippel for $k$-dimensional varieties and subquadratic dimension growth for $k+1$-dimensional varieties. The reason for taking turns is that most of the proofs require the previous result.
    
\subsection{Unions of Linear Varieties}
    We start by proving two results about unions of linear varieties that together provide a strong indication towards Conjecture \ref{SLSZ}.
    
    To this end note the following correspondence between linear varieties in $\PP_\QQ^n$ and sublattices of $\ZZ^{n+1}$.

    By considering the integer points of the affine cone over a $k$-dimensional linear variety $L$ in $\PP^n$ we can identify the rational points $L(\QQ)$ of $L$ with (the primitive points of) a primitive sublattice $\Lambda$ of $\ZZ^{n+1}$ of rank at most $k+1$. We call $L$ full-rank if the resulting lattice has rank $k+1$. It follows that $\#L(\QQ,B) \le \#\Lambda(B)$. Here $\Lambda(B)$ denotes the points of $\Lambda$ with integer coordinates $(x_0,\dots,x_n)$ and $|x_0|,\dots,|x_n|\le B$.

    Conversely we can identify a rank $k+1$ primitive sublattice of $\ZZ^{n+1}$ with a $k$-dimensional linear variety in $\PP^n$ by taking the variety with affine cone spanned by the lattice.

    This results in a bijective correspondence between full-rank $k$-dimensional linear varieties in $\PP^n$ and rank $k+1$ primitive sublattices of $\ZZ^{n+1}$. In such a way that rational points correspond to primitive lattice points of the same height.

    Using this correspondence, some elementary results from the geometry of numbers recorded by Heath-Brown in \cite{Heath-Brown-Ann} and \cite{Heath-Brown-squarefree}, and a result about the number of rational points of bounded height on Grassmanians by Schmidt \cite{Schmidt-lattices}, we can prove the following lemma.

    \begin{lem}\label{planecover}
        There exist $cB^{\frac{(n+1)(n-k)}{n}}$ $k$-planes jointly containing all rational points of height $\leq B$ in $\PP^n$ for some constant $c$ depending only on $n$ and $k$.
    \end{lem}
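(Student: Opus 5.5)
We identify rational points of height at most $B$ with primitive vectors $x\in\ZZ^{n+1}$ with $|x|_\infty\le B$, and $k$-planes with rank $k+1$ primitive sublattices $\Lambda\subset\ZZ^{n+1}$ via the correspondence above. The strategy is dual: a rank $k+1$ sublattice $\Lambda$ is determined by its orthogonal complement $\Lambda^\perp$, a primitive lattice of rank $n-k$ with $\det\Lambda^\perp=\det\Lambda$. A point $x$ lies on the plane of $\Lambda$ iff $x\perp\Lambda^\perp$. So it suffices to produce a family $\mathcal F$ of primitive rank $k+1$ lattices such that every primitive $x$ with $|x|\le B$ lies in some member, with $\#\mathcal F\ll B^{(n+1)(n-k)/n}$.

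\emph{Step 1 (small lattices through each point).} For primitive $x$ with $|x|\le B$, the lattice $x^\perp\cap\ZZ^{n+1}$ has rank $n$ and determinant $\|x\|\ll B$. By Minkowski's second theorem, with successive minima $\lambda_1\le\dots\le\lambda_n$ satisfying $\prod\lambda_i\asymp \|x\|\ll B$, we get $\lambda_1\cdots\lambda_{n-k}\ll B^{(n-k)/n}$. Hence $x^\perp$ contains a primitive rank $n-k$ sublattice $M$ spanned by its first $n-k$ minimal vectors, with $\det M\ll B^{(n-k)/n}$; these are the elementary facts recorded in \cite{Heath-Brown-Ann}, \cite{Heath-Brown-squarefree}. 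Then $x\in M^\perp$, and $M^\perp$ is a primitive rank $k+1$ lattice, i.e.\ a $k$-plane containing $x$.

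\emph{Step 2 (counting).} Take $\mathcal F=\{M^\perp : M \text{ primitive of rank } n-k,\ \det M\le cB^{(n-k)/n}\}$. Primitive rank $r$ sublattices of $\ZZ^{n+1}$ correspond to rational points on the Grassmannian $G(r,n+1)$, with height equal to the determinant. Schmidt's theorem \cite{Schmidt-lattices} states that the number with $\det\le H$ is $\asymp H^{n+1}$, independently of $r$. With $H=cB^{(n-k)/n}$ this gives $\#\mathcal F\ll B^{(n+1)(n-k)/n}$, as claimed.

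The main obstacle is Step 1: making sure that the lattice spanned by the first $n-k$ successive minima vectors can be replaced by a primitive lattice without increasing the determinant, and that the Euclidean versus sup-norm constants only affect $c$. Saturating the lattice only decreases its determinant, so this should be routine. One should also confirm that Schmidt's count is used in the normalization where height equals covolume, which is the form $\asymp H^{n+1}$.
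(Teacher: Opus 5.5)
Your proposal is correct and takes the same route as the paper. You take the rank $n$ lattice $x^\perp\cap\ZZ^{n+1}$ of determinant $\ll B$ and extract $n-k$ short independent vectors with product of lengths $\ll B^{(n-k)/n}$. You then pass to the primitive orthogonal complement, a rank $k+1$ lattice of the same determinant containing $x$, and count these with Schmidt's theorem. The differences are cosmetic: you use Minkowski's second theorem plus saturation where the paper uses the reduced basis of \cite[Lemma 1 (iii)]{Heath-Brown-Ann}, whose initial segment is automatically primitive. You also count the rank $n-k$ lattices $M$ rather than their complements, which is equivalent since $\det M=\det M^\perp$.
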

    \begin{proof}
        By the above correspondence, it suffices to show that there are $cB^{\frac{(n+1)(n-k)}{n}}$ rank $k+1$ primitive lattices jointly containing all $x \in \ZZ^{n+1}(B)$. Notice that for any $x \in \ZZ^{n+1}(B)$ we have $|x|\ll B$, where $|x|$ is the Euclidean norm of $x$.     For any $x \in \ZZ^{n+1}(B)$ consider the lattice $$\Lambda_x:= \{y\in\ZZ^{n+1}|x\cdot y = 0\}.$$ By \cite[Lemma 1 (i)]{Heath-Brown-Ann}, $\Lambda_x$ has rank $n$ and determinant $|x|\ll B$. By \cite[Lemma 1 (iii)]{Heath-Brown-Ann}, $\Lambda_x$ admits a basis $b_1,\dots,b_n$ such that $|b_1|\leq |b_2|\leq \dots\leq|b_n|$ and $\prod\limits_{i=1}^n |b_i| \ll|x|$. It follows that $$\prod\limits_{i=1}^{n-k}|b_i|\ll B^\frac{n-k}{n}.$$ Let $\Lambda'_x$ be the primitive lattice generated by $b_1, \dots, b_{n-k}$. Then $\det(\Lambda'_x)\ll B^\frac{n-k}{n}$.
        Now $${\Lambda'}_x^\bot:= \{y\in \ZZ^{n+1}|b_i\cdot y=0, i\in \{1,\dots,n-k\}\}$$ is a rank $k+1$ primitive lattice containing $x$ and by \cite[Lemma 1]{Heath-Brown-squarefree}, $\det({\Lambda'}_x^\bot)=\det(\Lambda'_x)$. Hence any $x\in \ZZ^{n+1}(B)$ lies in a primitive rank $k+1$ lattice of determinant $\ll B^\frac{n-k}{n}$. By \cite[Theorem 1]{Schmidt-lattices}, there are $\le cB^{\frac{(n+1)(n-k)}{n}}$ such lattices for some constant $c$ depending only on $n$ and $k$.
    \end{proof}

    This lemma is useful in two ways. It tells us that the exponent of $d$ in Conjecture \ref{SLSZ} cannot be lower and it can be used to cut a variety with a set of linear varieties without losing rational points.

    We will now prove Conjecture \ref{SLSZ} for the union of $d$ linear varieties. This result is the main inspiration for Conjecture \ref{SLSZ}.

    \begin{prop}[Sublinear projective Schwartz-Zippel for a union of linear varieties]\label{planecount}
        $d$ $k$-planes in $\PP^n$ jointly contain no more than $cd^{\frac{n}{n+1}}B^{k+1}$ points of height $\leq B$, for some constant $c$ depending only on $n$.
    \end{prop}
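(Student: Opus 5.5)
Via the correspondence above, each $k$-plane $L$ in the union can be replaced by the rank $k+1$ primitive sublattice $\Lambda_L\subset\ZZ^{n+1}$ spanned by its rational points; if $L$ is not full-rank we replace it by a full-rank plane of smaller dimension containing $L(\QQ)$ and pad it to a full-rank $k$-plane, which only increases the count. Then $\#L(\QQ,B)\le\#\Lambda_L(B)$. The standard lattice point count (e.g.\ via a reduced basis $b_1,\dots,b_{k+1}$ with $|b_1|\le\dots\le|b_{k+1}|$ and $\prod|b_i|\asymp\det\Lambda_L$, as in \cite[Lemma 1]{Heath-Brown-Ann}) gives
$$\#\Lambda_L(B)\ll \sum_{j=0}^{k+1}\frac{B^j}{|b_1|\cdots|b_j|}\ll 1+\frac{B^{k+1}}{\det\Lambda_L}+(\text{intermediate terms}).$$
For the leading estimate I use the cruder bound $\#\Lambda_L(B)\ll B^{k+1}/\det(\Lambda_L)+(\text{lower order})$, and in any case the trivial bound $\#\Lambda_L(B)\ll B^{k+1}$.

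The key step is to order the $d$ lattices by determinant. By \cite[Theorem 1]{Schmidt-lattices}, the number of primitive rank $k+1$ sublattices of $\ZZ^{n+1}$ with determinant $\le D$ is $\ll D^{n+1}$. So at most $\ll D^{n+1}$ of our planes have determinant $\le D$. Choose $D=d^{\frac{1}{n+1}}$. The planes with $\det\le D$ number at most $\min(d,cD^{n+1})\ll d$, but more usefully we handle them dyadically: planes with $\det\in(D_j/2,D_j]$ number $\ll D_j^{n+1}$, and each contributes $\ll B^{k+1}/D_j$. Summing over dyadic $D_j\le d^{1/(n+1)}$ gives $\sum_j D_j^{n}B^{k+1}\ll d^{\frac{n}{n+1}}B^{k+1}$. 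The remaining planes, at most $d$ of them, each have $\det> d^{1/(n+1)}$ and so contribute $\ll B^{k+1}d^{-1/(n+1)}$ each, totalling $\ll d^{\frac{n}{n+1}}B^{k+1}$. This is the extremal configuration: the worst case is filling the small-determinant lattices first, which mirrors Lemma \ref{planecover}.

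The main obstacle is the lattice point count itself. $\#\Lambda(B)\ll B^{k+1}/\det\Lambda$ is false when $\Lambda$ has very short vectors, since the intermediate terms $B^j/(|b_1|\cdots|b_j|)$ can dominate. To handle this, I would bound each term by $B^{k+1}/\det(\Lambda)^{j/(k+1)}$ using $|b_1|\cdots|b_j|\ge(\det\Lambda)^{j/(k+1)}$, which follows since the $b_i$ are increasing. This loses a power, so a cleaner route is needed. One option is to note that points counted by intermediate terms lie in the rank-$j$ sublattice spanned by $b_1,\dots,b_j$, and to rerun the dyadic/Schmidt counting on those sublattices, whose determinants are controlled. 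The simpler option is to observe $B^{j}/(|b_1|\cdots|b_j|)\le B^{k+1}/(|b_1|\cdots|b_{k+1}|)$ whenever $|b_i|\le B$ for $i>j$, while if $|b_{j+1}|>B$ the points of $\Lambda(B)$ lie in a lower-rank lattice and we can pass to that. Once $\#\Lambda(B)\ll B^{k+1}/\det\Lambda+B^{k}$ (or a comparable bound) is established, the error terms contribute $dB^k$. This is harmless when $d\le B^{n+1}$. When $d>B^{n+1}$, the trivial bound $\#\PP^n(\QQ,B)\ll B^{n+1}$ finishes the proof only if $k$ is large enough relative to $n$; otherwise a separate comparison with $d^{n/(n+1)}B^{k+1}$ is needed.
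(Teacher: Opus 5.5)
Your argument is correct, and the two points you leave open close at once.

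\textbf{The lattice count.} The bound you want for every lattice already follows from the sum you display. Since $|b_i|\ge 1$ and $B\ge 1$, each term with $j\le k$ is at most $B^j\le B^k$, so $\#\Lambda(B)\ll B^k+B^{k+1}/\det\Lambda$. No passage to sublattices is needed. Incidentally, the inequality $|b_1|\cdots|b_j|\ge(\det\Lambda)^{j/(k+1)}$ you tried is backwards: for increasing $|b_i|$, the product of the $j$ shortest vectors is at most $\ll(\det\Lambda)^{j/(k+1)}$.

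\textbf{The case $d>B^{n+1}$.} This case causes no trouble for any $k\ge 0$, because then $d^{n/(n+1)}B^{k+1}>B^{n}B^{k+1}\ge B^{n+1}$, so the trivial bound always suffices. In the complementary range $d\le B^{n+1}$, the accumulated error $dB^k$ is at most $d^{n/(n+1)}B^{k+1}$.

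With these two fixes your proof is complete. You split the lattices at $\det = d^{1/(n+1)}$. Those below are handled by dyadic shells, each holding $\ll D_j^{n+1}$ lattices, giving the geometric sum $\sum_j D_j^{n}B^{k+1}\ll d^{n/(n+1)}B^{k+1}$. Those above number at most $d$, each with $\ll B^{k+1}d^{-1/(n+1)}+B^k$ points.

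\textbf{Comparison with the paper.} The paper's route differs in three ways.
\begin{itemize}
\item It uses the reduction to $d\le B^{n+1}$ to force $H=cd^{1/(n+1)}\ll B$. In that regime $|b_{j+1}|\cdots|b_{k+1}|\ll\det\Lambda\ll B$, so the leading term $B^{k+1}/\det\Lambda$ dominates for every lattice of determinant $\le H$.
\item It handles non-full-rank planes by the bound $B^k$, whereas you pad them to full-rank planes.
\item Where you need only the crude upper bound $\ll D^{n+1}$ from Schmidt's theorem on dyadic shells, the paper uses Schmidt's asymptotic with its error term. It counts lattices in the short intervals $(h-h^{(k-1)/k},h]$, covers $[2,H]$ by $\le kH^{1/k}$ such intervals, and sums $b_i^{n-1/k}$ to reach the same $H^nB^{k+1}$. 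It then asserts that the $d$ planes with the most points are those of smallest determinant, and invokes the lower bound that at least $d$ lattices have determinant $\le cd^{1/(n+1)}$.
\end{itemize}
The fine intervals buy nothing for the upper bound, and they are not even defined for $k=0$. Your explicit small/large-determinant split replaces the ``most points $=$ smallest determinant'' step. That step is only heuristic in the paper, since $B^{k+1}/\det\Lambda$ is an upper bound rather than the true count. Your split also makes the lower-bound half of Schmidt's theorem unnecessary for the proposition; that half matters only for the optimality discussion with Lemma \ref{planecover}.
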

    \begin{proof}
        First note that we may assume $d \leq B^{n+1}$ since if $d\geq B^{n+1}$ then $cB^{n+1}\le cd^\frac{n}{n+1}B\le cd^{\frac{n}{n+1}}B^{k+1}$. Since we will later set $H=cd^{\frac{1}{n+1}}$ we can further assume that $1\le H\ll B$.

        We will count the number of rational points of bounded height on the $d$ $k$-planes containing the most rational points. It suffices to consider full-rank $k$-planes since if $L$ is not full-rank then $$\#L(\QQ,B) \le \#\Lambda(B) \ll B^{\text{rank } \Lambda} \le B^k.$$ Hence it suffices to count on the $d$ rank $k+1$ primitive lattices in $\ZZ^{n+1}$ that contain the most points. By \cite[Lemma 1 (v)]{Heath-Brown-Ann}, these are the lattices with the lowest determinant.

        By \cite[Theorem 1]{Schmidt-lattices}, there are less than $a(n,k)h^{n+\frac{k-1}{k}}$ primitive lattices $\Lambda$ of rank $k+1$ in $\ZZ^{n+1}$ with $$h-h^\frac{k-1}{k}< \det(\Lambda)\le h.$$

        Now subdivide the interval $[2, H]$ into $K \le kH^\frac{1}{k}$ intervals $I_i=[b_{i+1},b_i]$ where $b_{0}=H$ and $b_{K-1}\geq 2$ but $b_{K}<2$ and such that $b_{i+1}=b_{i}-b_{i}^\frac{k-1}{k}$. To see this is possible, consider the strictly decreasing function $$f(i) = \frac{1}{k^k}(kH^\frac{1}{k}-i)^k.$$ Using the mean value theorem it can be shown that this function has the property $$f(i)-f(i+1) \le f(i)^\frac{k-1}{k}.$$ Since $f(0) = H$ and $f(kH^\frac{1}{k})=0$, this function divides $[0, H]$ into $kH^\frac{1}{k}$ intervals $[f(i+1), f(i)]$. Since each of these intervals is shorter than the intervals of the required form covering the same part, $[2, H]$ can indeed be covered by no more than $kH^\frac{1}{k}$ intervals of the required form.

        Now we compute an upper bound for the number $N(B,H)$ of integer points of height $\le B$ that lie on at least one rank $k+1$ primitive lattice of determinant $\le H$, using that by \cite[Lemma 1 (v)]{Heath-Brown-Ann}, a rank $k+1$ lattice of determinant $h$ contains no more than $c\frac{B^{k+1}}{h}$ points of height $\le B$. Let $c_1$ denote the number of primitive rank $k+1$ lattices of determinant $<2$.

        \begin{align*}
            N(B,H) &\le \sum\limits_{\{\Lambda |\det(\Lambda)\le H\}} c\frac{B^{k+1}}{\det(\Lambda)} \\
            &\le c_1cB^{k+1}+\sum\limits_{i=0}^{K-1}\sum\limits_{\{\Lambda |\det(\Lambda)\in I_i\}} c\frac{B^{k+1}}{\det(\Lambda)} \\
            &\le c_1'B^{k+1}+ cB^{k+1}\sum\limits_{i=0}^{K-1}\sum\limits_{\{\Lambda |\det(\Lambda)\in I_i\}} \frac{1}{b_{i+1}} \\
            &\le c_1'B^{k+1}+ c'B^{k+1}\sum\limits_{i=0}^{K-1} \frac{b_i^{n+\frac{k-1}{k}}}{b_{i+1}} \\
            &\le c_1'B^{k+1}+ c''B^{k+1}\sum\limits_{i=0}^{K-1} b_i^{n+\frac{k-1}{k}-1} \\
            &\le c_1'B^{k+1}+ c''B^{k+1}KH^{n+\frac{k-1}{k}-1} \le  c'''H^nB^{k+1}
        \end{align*}

        Here we used that $$\frac{b_{i+1}}{b_i} = 1-b_{i}^{-\frac{1}{k}}\ge 1-b_{K-1}^{-\frac{1}{k}} \ge 1-2^{-\frac{1}{k}}$$ such that $\frac{b_{i}}{b_{i+1}}$ is bounded above by a constant depending only on $k$.

        By \cite[Theorem 1]{Schmidt-lattices}, there is a constant $c$ such that there are at least $d$ lattices with determinant $\le cd^\frac{1}{n+1}$ such that taking $N(B,cd^\frac{1}{n+1})$ yields the result.
    \end{proof}

    Notice that applying Proposition \ref{planecount} to the union of the linear varieties obtained in Lemma \ref{planecover} returns the trivial bound $\#\PP^n(\QQ, B) \le cB^{n+1}$, showing that these results are optimal. Suppose that the exponent of $B$ in Lemma \ref{planecover} can be replaced by a lower exponent, then Proposition \ref{planecount} would yield a lower exponent of $B$ in the resulting bound for $\#\PP^n(\QQ, B)$, but the exponent $n+1$ is well-known to be optimal. Similarly a lower exponent of $d$ or $B$ in Proposition \ref{planecount} would yield a contradiction when applied to the $k$-planes of Lemma \ref{planecover}.

    Proposition \ref{planecount} shows that the bounds of Lemma \ref{SZ} cannot be reached by a union of $d$ lines, contrary to the bounds of Lemma \ref{ASZ}. This suggests that Lemma \ref{SZ} is suboptimal and that the actual sharpest bound of this form is that of Conjecture \ref{SLSZ}.

    In the remainder of this section we will work towards Conjecture \ref{SLSZ} by proving sublinear dependence on $d$ for specific values of $k$ and $n$. Most of these proofs make use of Lemma \ref{planecover} and Proposition \ref{planecount}.
    
\subsection{Points and Curves}
    We start by proving sublinear Schwartz-Zippel for configurations of points and subquadratic dimension growth for curves. Sublinear Schwartz-Zippel for points follows by a simple argument. Subquadratic dimension growth for curves is obtained as a corollary to \cite[Theorem 1]{BinCluKat} in \cite[Proposition 5]{CGlaz}. There, however, only the case of curves in $\PP^2$ is proven. We will record a full proof here.
    
    For completeness, we include a separate proof of Conjecture \ref{SLSZ} for $0$-dimensional varieties, which is almost trivial.

    \begin{prop}\label{pointsinnzip}
        A $0$-dimensional projective variety $X$ of degree $d$ in $\PP_\QQ^n$ contains no more than $cd^\frac{n}{n+1}B$ points of height $\le B$ for some constant $c$ depending only on $n$.
    \end{prop}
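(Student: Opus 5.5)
A $0$-dimensional variety $X$ of degree $d$ consists of at most $d$ geometric points, so $\#X(\QQ,B)\le d$ trivially. On the other hand $\#X(\QQ,B)\le \#\PP^n(\QQ,B)\le c_nB^{n+1}$. The plan is to combine these two trivial bounds by taking a geometric mean.

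Concretely, $\min(d, c_nB^{n+1})\le d^{\frac{n}{n+1}}(c_nB^{n+1})^{\frac{1}{n+1}} = c_n^{\frac{1}{n+1}} d^{\frac{n}{n+1}}B$, since $\min(a,b)\le a^{\theta}b^{1-\theta}$ for any $\theta\in[0,1]$. With $\theta = \frac{n}{n+1}$ this gives the claimed bound $\#X(\QQ,B)\le c\,d^{\frac{n}{n+1}}B$ with $c=c_n^{1/(n+1)}$, a constant depending only on $n$.

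Equivalently, one can split into the two cases $d\le B^{n+1}$ and $d\ge B^{n+1}$. In the first case $d = d^{\frac{n}{n+1}}d^{\frac{1}{n+1}}\le d^{\frac{n}{n+1}}B$. In the second case $c_nB^{n+1}= c_nB^{n}\cdot B\le c_nd^{\frac{n}{n+1}}B$.

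There is no real obstacle here. The only point to check is that the degree bounds the number of points, which holds because a reduced $0$-dimensional scheme of degree $d$ has exactly $d$ geometric points. Alternatively, one could view each point as a $0$-plane and invoke Proposition \ref{planecount} with $k=0$, which gives the same bound directly.
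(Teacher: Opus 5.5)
Your proof is correct and matches the paper's argument: the paper likewise uses that $X$ has at most $d$ points when $d^{\frac{1}{n+1}}\le B$, and the trivial bound $\#\PP^n(\QQ,B)\le cB^{n+1}$ when $d^{\frac{1}{n+1}}\ge B$. Your geometric-mean formulation is just a compact way of writing that case split, and your closing remark about Proposition \ref{planecount} repeats the paper's own note after the proof.
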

    \begin{proof}
        $X$ consists of at most $d$ points, so if $d^\frac{1}{n+1}\le B$ the result follows. If $d^\frac{1}{n+1}\ge B$ then the result follows from the trivial bound $\#\PP^n(\QQ, B) \le cB^{n+1}$.
    \end{proof}

    Notice that Proposition \ref{pointsinnzip} could also be obtained as a special case of Proposition \ref{planecount}.

    We continue by providing a proof of Proposition \ref{CG5} for arbitrary $n$. This is recorded in the following slightly stronger statement.

    \begin{prop}\label{CG5b}
        Let $C$ be an irreducible curve in $\PP_\QQ^n$ of degree $d>1$ that is not contained in a hyperplane and let $B>2$ be given. Then one has $$\#C(\QQ, B) \le c(n)d^{\frac{n+2}{n+1}}B(\log B)^\kappa$$ for some constant $c(n)$ and absolute constant $\kappa$.
    \end{prop}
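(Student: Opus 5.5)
The plan is to combine two complementary bounds, one good for small $d$ and one good for large $d$, and to check that they cross exactly at the exponent $\frac{n+2}{n+1}$.

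\textbf{Large degree.} Apply Lemma \ref{planecover} with $k=n-1$. It gives $cB^{\frac{n+1}{n}}$ hyperplanes that jointly contain every rational point of height $\le B$ in $\PP^n$. Since $C$ is irreducible and not contained in a hyperplane, Bézout's theorem shows each of these hyperplanes meets $C$ in at most $d$ points. Hence
$$\#C(\QQ,B)\le c\,d\,B^{\frac{n+1}{n}}.$$
A short computation shows $dB^{\frac{n+1}{n}}\le d^{\frac{n+2}{n+1}}B$ if and only if $B^{\frac1n}\le d^{\frac1{n+1}}$, that is, $d\ge B^{\frac{n+1}{n}}$. So this settles the range $d\ge B^{\frac{n+1}{n}}$.

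\textbf{Intermediate degree.} Next treat $(\log B)^N<d<B^{\frac{n+1}{n}}$. Apply Proposition \ref{gendetmeth} with $m=1$ and $E=\frac{n+1}{n}$, so that $c$ and $\kappa$ depend only on $n$. This gives at most $c(\log B)^\kappa$ hypersurfaces $h_i$ of degree $\le d$ with $C\not\subset h_i$ that together contain $C(\QQ,B)$. Since $C$ is irreducible, each $C\cap h_i$ is finite, and by Bézout it has at most $d^2$ points. Therefore
$$\#C(\QQ,B)\le c\,d^2(\log B)^\kappa.$$
Now $d^2\le d^{\frac{n+2}{n+1}}B$ is equivalent to $d^{\frac{n}{n+1}}\le B$, that is, $d\le B^{\frac{n+1}{n}}$, which holds throughout this range. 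So the two regimes match up precisely at $d=B^{\frac{n+1}{n}}$.

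\textbf{Small degree.} Finally treat $2\le d\le(\log B)^N$. Here I would use any uniform polynomial-in-$d$ bound of the form $c\,d^{a}B^{2/d}(\log B)^{\kappa'}\le c\,d^aB(\log B)^{\kappa'}$. For $d\ge4$, Theorem \ref{CDDG} with $\dim C=1$ already gives this with $a=2$. For $d=2,3$ I would cite the uniform curve bounds of \cite{CCDN-dgc}, which give $\ll B^{2/d}$ up to a constant depending on $n$. In either case the extra factor $d^{a}\le(\log B)^{aN}$ is absorbed by enlarging $\kappa$.

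The main obstacle is exactly this bookkeeping. First, $\kappa$ must be independent of $d$, which forces $N$ and $E$ to be fixed in terms of $n$ alone; it is therefore important that Proposition \ref{gendetmeth} is applied with the fixed choice $E=\frac{n+1}{n}$. Second, the small-degree cases $d=2,3$ need an external uniform bound, since Theorem \ref{CDDG} requires $d\ge4$. The statement asks for an absolute $\kappa$, whereas Proposition \ref{gendetmeth} only supplies $\kappa(n,1)$. I would first check whether the birational projection to a plane curve (via \cite[Lemma 5.1]{CCDN-dgc}, as in the proof of Proposition \ref{gendetmeth}) preserves the $n=2$ exponent from \cite{BinCluKat}. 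Otherwise I would settle for $\kappa=\kappa(n)$.
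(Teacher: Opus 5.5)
Your splitting into degree ranges and your large-degree step (the $cB^{\frac{n+1}{n}}$ hyperplanes from Lemma \ref{planecover} plus Bézout, for $d\ge B^{\frac{n+1}{n}}$) are exactly the paper's, and your crossover computations are correct. The gap is in the other two ranges: as written they only give $\kappa=\kappa(n)$, while the statement asks for an absolute $\kappa$. The sources of $n$-dependence are these:
\begin{itemize}
\item Proposition \ref{gendetmeth} comes with $\kappa(n,m)$ and $N(n,m)$.
\item Theorem \ref{CDDG} comes with $\kappa(n)$.
\item Absorbing the surplus factor $d^{a-\frac{n+2}{n+1}}$ for $d\le(\log B)^N$ costs $(\log B)^{(a-\frac{n+2}{n+1})N(n,1)}$.
\end{itemize}
You notice this yourself but leave it as an unresolved check. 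So what you actually prove is the weaker statement with $\kappa$ depending on $n$.

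The missing ingredient is \cite[Theorem 1]{BinCluKat}. It already applies to integral curves in $\PP^n$ of any codimension and gives $\#C(\QQ,B)\le c(n)d^2B^{2/d}(\log B)^{\kappa}$ with $\kappa$ absolute. The paper uses it for the whole range $d^{\frac{n}{n+1}}<B$ at once:
\begin{itemize}
\item If $d\le\log B$, then $B^{2/d}\le B$ and $d^2\le d^{\frac{n+2}{n+1}}\log B$.
\item If $d>\log B$, then $B^{2/d}\le e^2$ and $d^2<d^{\frac{n+2}{n+1}}B$.
\end{itemize}
This makes Proposition \ref{gendetmeth}, Theorem \ref{CDDG} and the separate treatment of $d=2,3$ unnecessary.

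Your own route can also be repaired along the lines you suggest. For $m=1$, the proof of Proposition \ref{gendetmeth} only applies Proposition \ref{detmeth} in $\PP^2$, at height $B'=c_nd^{2(n-2)^2}B$. Since $\log B'\ll_n\log B$ for $d<B^{\frac{n+1}{n}}$, you can take $\kappa$ to be the absolute $\PP^2$ constant. The same holds for $N$, after raising it by one and taking $B$ large in terms of $n$. But the small range $d\le(\log B)^N$ then still needs a curve bound whose exponents of $d$ and $\log B$ are independent of $n$. Theorem \ref{CDDG}, with its $\kappa(n)$, does not provide that as stated.

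One smaller point: you say that because $C$ is irreducible, $C\cap h_i$ is finite. When $C$ is not geometrically irreducible, the hypersurface built in the proof of Proposition \ref{gendetmeth} contains a conjugate component of $C$, so $C\cap h_i$ need not be finite. The bound $d^2$ still holds, but only because $C(\QQ)$ lies in $Y\cap h_i$ for a single geometric component $Y$.
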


    \begin{proof}
        If $\log B \ge d$ then the result follows from \cite[Theorem 1]{BinCluKat}. Hence we may suppose $\log B < d$. By \cite[Theorem 1]{BinCluKat}, we obtain $$\#C(\QQ, B) \le c_1(n)d^2(\log B)^\kappa$$ for some constant $c_1(n)$ and absolute constant $\kappa$. If we now assume that $d^\frac{n}{n+1}<B$ the result follows.

        The remaining case $d^\frac{n}{n+1} \ge B$ is treated as follows: By Lemma \ref{planecover} $\PP^n(\QQ, B)$ can be covered by $c_2(n)B^\frac{n+1}{n}$ hyperplanes. Since the intersection of $C$ with each hyperplane consists of at most $d$ points we obtain $$\#C(\QQ, B) \le c_2(n)dB^\frac{n+1}{n}.$$ Combining this with $d^\frac{n}{n+1} \ge B$ yields the result.
    \end{proof}
    Proposition \ref{CG5} now follows by applying Proposition \ref{CG5b} to $C$ as an irreducible curve in the lowest-dimensional hyperplane containing $C$.
\subsection{Curves and Surfaces}
    For curves in $\PP^2$ we can prove Conjecture \ref{SLSZ} using only Lemma \ref{planecover} and Proposition \ref{planecount}.
    
    \begin{cor}[Sublinear projective Schwartz-Zippel for curves in $\PP^2$]\label{curvein2zip}
        Let $C$ be a projective curve of degree $d$ in $\PP_\QQ^2$ and let $B \ge 1$ be given. Then one has $$\#C(\QQ, B) \le cd^{\frac{2}{3}}B^2$$ for some absolute constant $c$.
    \end{cor}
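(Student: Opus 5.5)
We split into two regimes, according to how $d^{\frac{1}{3}}$ compares with $B$. The only inputs are Lemma \ref{planecover} with $n=2$, $k=1$ and Proposition \ref{planecount} with $n=2$, $k=1$. We first remove linear components. Write $C = C_{\mathrm{lin}} \cup C'$, where $C_{\mathrm{lin}}$ is the union of the lines contained in $C$ and $C'$ is the union of the remaining irreducible components. There are at most $d$ lines in $C_{\mathrm{lin}}$, so Proposition \ref{planecount} gives $\#C_{\mathrm{lin}}(\QQ,B) \le c d^{\frac{2}{3}} B^2$. It therefore suffices to bound $\#C'(\QQ,B)$, where $C'$ has degree at most $d$ and contains no line.

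For $C'$, Lemma \ref{planecover} with $n=2$, $k=1$ provides $cB^{\frac{3}{2}}$ lines in $\PP^2$ that jointly contain all of $\PP^2(\QQ,B)$. Since $C'$ contains none of these lines, Bézout's theorem shows that each of them meets $C'$ in at most $d$ points. Hence
\[
\#C'(\QQ,B) \le c\, d\, B^{\frac{3}{2}}.
\]
If $d \le B^{\frac{3}{2}}$, then $d^{\frac{1}{3}} \le B^{\frac{1}{2}}$. This gives $dB^{\frac{3}{2}} = d^{\frac{2}{3}} d^{\frac{1}{3}} B^{\frac{3}{2}} \le d^{\frac{2}{3}} B^2$, which is the desired bound. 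If instead $d > B^{\frac{3}{2}}$, the trivial bound suffices: $\#\PP^2(\QQ,B) \le cB^3 = cB^2 \cdot B < cB^2 d^{\frac{2}{3}}$. Adding the bounds for $C_{\mathrm{lin}}$ and $C'$ completes the argument.

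The only point needing care is the intersection count for $C'$: it requires that no line of the covering is a component of $C'$, which is why the linear components are split off first and handled by Proposition \ref{planecount}. With that decomposition in place, the remaining steps are elementary exponent bookkeeping, and the threshold $d = B^{\frac{3}{2}}$ is exactly where the two estimates for $C'$ coincide. The argument also does not need $C$ to be irreducible or reduced beyond the degree bound, since both pieces have degree at most $d$.
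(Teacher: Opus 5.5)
Your proposal is correct and follows the paper's proof essentially step for step. You remove the linear components via Proposition \ref{planecount} and bound the remaining part by intersecting with the $cB^{\frac{3}{2}}$ lines of Lemma \ref{planecover}, which gives $cdB^{\frac{3}{2}}$ points. You then switch to the trivial bound $cB^3$ once $d^{\frac{1}{3}} \ge B^{\frac{1}{2}}$, exactly as the paper does.
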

    \begin{proof}
        By Proposition \ref{planecount} we may assume $C$ has no linear factors. Then the intersection of $C$ with the $cB^\frac{3}{2}$ lines jointly containing $\PP^2(\QQ,B)$ provided by Lemma \ref{planecover} consists of at most $cdB^\frac{3}{2}$ points. Now we are done if $d^\frac{1}{3}\le B^\frac{1}{2}$. If $d^\frac{1}{3}\ge B^\frac{1}{2}$ then we find $$\#C(\QQ, B)\le \#\PP^2(\QQ, B)\le cB^3 \le cd^{\frac{2}{3}}B^2.$$
    \end{proof}

    For curves and surfaces in $\PP^3$ we obtain Conjecture \ref{SLSZ} up to a logarithmic factor.

    \begin{prop}[Sublinear projective Schwartz-Zippel for curves in $\PP^3$]\label{curvein3zip}
        Let $C$ be a projective curve of degree $d$ in $\PP_\QQ^3$ and let $B \ge 1$ be given. Then one has $$\#C(\QQ, B) \le cd^{\frac{3}{4}}B^2 (\log B)^\kappa$$ for some absolute constants $c$ and $\kappa$.
    \end{prop}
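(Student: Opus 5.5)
We reduce to irreducible components and then compare two bounds for a single curve: one from intersecting with a plane cover (Lemma \ref{planecover}), and one from dimension growth (Proposition \ref{CG5b} or \cite[Theorem 1]{BinCluKat}). The target is $cd^{3/4}B^2(\log B)^\kappa$.

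\emph{Reductions.} Write $C=\bigcup_j C_j$ with $C_j$ irreducible of degree $d_j$ and $\sum d_j = d$. Lines are handled by Proposition \ref{planecount} with $n=3$, $k=1$, which gives $\ll d^{3/4}B^2$ for all linear components at once. We may also assume $d^{1/4}\le B^{1/2}$, i.e.\ $d\le B^2$, since otherwise the trivial bound $\#\PP^3(\QQ,B)\ll B^4\le d^{3/4}B^2\cdot B^{4-2}d^{-3/4}$ already suffices. Concretely, $d\ge B^2$ gives $B^4\le d^{3/4}B^{2}\cdot B^{2-3/2}$, so in fact we need $d\ge B^{8/3}$ for the trivial bound to be enough. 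The regime $B^2\le d\le B^{8/3}$ must therefore also be covered by the component estimates below.

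\emph{One irreducible component of degree $e$.}
\begin{itemize}
\item[(a)] \emph{Plane curves.} If $C_j$ lies in a rational plane, we use Corollary \ref{curvein2zip}. Strictly, this requires a height-preserving lattice identification of the plane with $\PP^2$, and that identification loses control. So instead we intersect $C_j$ with the $\ll B^{4/3}$ lines of the plane's lattice, counted via Lemma \ref{planecover} inside the rank-3 lattice. This plausibly gives $\ll eB^{3/2}$ and $\ll B^3$ points.
\item[(b)] \emph{Non-degenerate curves, intersection bound.} If $C_j$ is not contained in a plane, each of the $\ll B^{4/3}$ planes from Lemma \ref{planecover} (with $n=3$, $k=2$) meets $C_j$ in at most $e$ points, so $\#C_j(\QQ,B)\ll eB^{4/3}$.
\item[(c)] \emph{Non-degenerate curves, dimension growth.} Proposition \ref{CG5b} gives $\ll e^{5/4}B(\log B)^\kappa$, and \cite[Theorem 1]{BinCluKat} gives $\ll e^2B(\log B)^\kappa$.
\end{itemize}
Taking the minimum in (b) and (c): for $e\le B^{4/3}$ we have $e^{5/4}B\le e^{3/4}B^2$ exactly when $e^{1/2}\le B$, i.e.\ $e\le B^2$; and for $e\ge B^{4/3}$ we have $eB^{4/3}\le e^{3/4}B^2$ exactly when $e^{1/4}\le B^{2/3}$, i.e.\ $e\le B^{8/3}$. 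So for every $e\le B^{8/3}$ a single component satisfies $\#C_j(\QQ,B)\ll e^{3/4}B^2(\log B)^\kappa$. For $e>B^{8/3}$ the trivial bound $B^4\le e^{3/4}B^2$ holds.

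\emph{Summing over components.} The bound $e^{3/4}$ is concave, and summing gives $\sum d_j^{3/4}$, which can be as large as $d$ when there are many small components. This is the main obstacle. To avoid it, we do not split $C$ into components; we run the intersection argument on $C$ globally instead. We use the plane cover directly on all of $C$, giving $\le dB^{4/3}$ once components contained in some cover plane are removed. The components lying in cover planes are plane curves, and we group them by plane and apply the planar bound of step (a) to each plane's union with its total degree. Only the dimension-growth step is genuinely per-component, and there we exploit that $e^{5/4}$ is convex, so $\sum_j d_j^{5/4}\le d^{5/4}$.

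\emph{Assembly.} The argument then splits on $d$:
\begin{itemize}
\item for $d\le B^2$, use the convex dimension-growth bound, giving $d^{5/4}B(\log B)^\kappa\le d^{3/4}B^2(\log B)^\kappa$;
\item for $d\ge B^{4/3}$, use the global plane-cover bound $dB^{4/3}$ for non-planar parts, plus the plane-grouped bounds for planar parts.
\end{itemize}
The two ranges overlap, so every $d$ is covered. Handling the planar parts cleanly, by summing over planes in the spirit of Proposition \ref{planecount} with the determinant weighting, is the step I expect to be most delicate.
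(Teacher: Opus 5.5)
\textbf{There is a genuine gap: the non-linear planar components in the range $B^2<d\le B^{8/3}$ are never bounded, and the route you sketch for them cannot work.}

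Two parts of your proposal are correct and coincide with the paper, which proves this as the case $n=3$ of Proposition \ref{curveinnzip}.
\begin{itemize}
\item Lines are handled by Proposition \ref{planecount}.
\item Components not lying in a plane meet each of the $\ll B^{4/3}$ planes of Lemma \ref{planecover} in at most their degree many points. This gives $\ll dB^{4/3}\le d^{3/4}B^2$ for $d\le B^{8/3}$, and the trivial bound covers larger $d$.
\end{itemize}
Your remark that planar components lying outside the cover planes are absorbed by the same count is also correct.

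\emph{Your small-$d$ regime is misstated.} The exponent $5/4$ of Proposition \ref{CG5b} needs the component to span $\PP^3$. A plane curve only gets $e^{4/3}B(\log B)^\kappa$ from Proposition \ref{CG5}, so dimension growth covers the planar part only for $d\le B^{12/7}$. Per-component line covers extend this to $d\le B^2$. Step (a) is fine here: a reduced basis of the plane's lattice maps $P(\QQ,B)$ into $\PP^2(\QQ,cB)$, so your worry about height distortion is unfounded.

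\emph{Grouping by plane fails for $B^2<d\le B^{8/3}$.} Put a component of degree $e\asymp d/M$ in each of the $M$ cover planes. Proposition \ref{planecount} shows $M\asymp B^{4/3}$, since fewer planes cannot cover $\PP^3(\QQ,B)$. Then:
\begin{itemize}
\item your step (a) bounds sum to $\asymp dB^{3/2}$, which exceeds $d^{3/4}B^2$;
\item even Corollary \ref{curvein2zip} applied in each plane sums to $M^{1/3}d^{2/3}B^2\asymp d^{2/3}B^{22/9}$, which also exceeds $d^{3/4}B^2$.
\end{itemize}
Any bound that sees only the total degree in each plane discards what rescues this configuration. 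Each component of degree $e>\log B$ has only $\ll e^2(\log B)^\kappa$ points by \cite[Theorem 1]{BinCluKat}, giving a total $\ll d^2B^{-4/3}(\log B)^\kappa\le d^{3/4}B^2(\log B)^\kappa$.

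\emph{The missing idea is Lemma \ref{curvesinplanes}.} It combines per-component dimension growth with Proposition \ref{planecount} through a dyadic split \emph{by degree} rather than by plane.
\begin{itemize}
\item Planar components of degree $\le\log B$ contribute $\ll dB(\log B)^{\kappa'}$ by Proposition \ref{CG5}. This is $\le d^{3/4}B^2(\log B)^{\kappa'}$ because $d^{1/4}\le B$.
\item Consider a class of components with degrees in $[b/2,b]$ and $b>\log B$. It has at most $2d/b$ members, lying in at most $2d/b$ planes. So its point count is
\[
\ll\min\bigl(db(\log B)^\kappa,\,(d/b)^{3/4}B^3\bigr).
\]
The first term comes from \cite[Theorem 1]{BinCluKat} applied to each component, the second from Proposition \ref{planecount} applied to those planes.
\item Balancing the two terms gives at most $d^{6/7}B^{12/7}(\log B)^\kappa$. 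This is $\le d^{3/4}B^2(\log B)^\kappa$ exactly when $d\le B^{8/3}$.
\item Summing over the $O(\log B)$ classes costs one more logarithm.
\end{itemize}
This is what overcomes the concavity obstacle you correctly identified.
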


    We will establish Proposition \ref{curvein3zip} as a special case of Proposition \ref{curveinnzip} below. Note however that (up to a logarithmic factor) we can already obtain Conjecture \ref{SLSZ} for surfaces in $\PP^3$ as a corollary to Proposition \ref{curvein3zip}.

    \begin{cor}[Sublinear projective Schwartz-Zippel for surfaces in $\PP^3$]\label{planein3zip}
        Let $X$ be a projective surface of degree $d$ in $\PP_\QQ^3$ and let $B \ge 1$ be given. Then one has $$\#X(\QQ,B) \le cd^\frac{3}{4}B^3(\log B)^\kappa$$ for some absolute constants $c$ and $\kappa$.
    \end{cor}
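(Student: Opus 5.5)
Write $X$ as a union of irreducible components, and split off the planes from the rest. Let $d_1$ be the total degree of the planar components. By Proposition \ref{planecount} with $n=3$, $k=2$, these contribute at most $cd_1^{3/4}B^3 \le cd^{3/4}B^3$ points. So it suffices to treat a surface $X$ of degree $d$ with no linear components. We may also assume $d^{1/4}\le B$. Indeed, if $d^{1/4}> B$, the trivial bound gives
$$\#X(\QQ,B)\le \#\PP^3(\QQ,B)\le cB^4\le cd^{3/4}B^3.$$

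For the main case we mimic Corollary \ref{curvein2zip}, cutting $X$ by the planes of Lemma \ref{planecover}. With $n=3$, $k=2$, Lemma \ref{planecover} gives $cB^{4/3}$ planes $P_j$ that jointly contain $\PP^3(\QQ,B)$. Since $X$ has no planar components, each $X\cap P_j$ is a projective curve of degree at most $d$ in $P_j\cong\PP^2$. Bounding each such curve by Corollary \ref{curvein2zip} gives $\#(X\cap P_j)(\QQ,B)\le cd^{2/3}B^2$ per plane. That plane is embedded in $\PP^3$, so we must control the height under the identification $P_j\cong \PP^2$; the heights of points in $P_j$ get distorted by the lattice of $P_j$. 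This naive route totals $cd^{2/3}B^{10/3}$, which is only $\le cd^{3/4}B^3$ when $B^{1/3}\le d^{1/12}$, i.e. $B^4\le d$, which is exactly the trivial range. So this naive approach is too weak, and we need Proposition \ref{curvein3zip} instead.

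The better route is to work directly in $\PP^3$. The curve $X\cap P_j$ is a projective curve of degree $\le d$ in $\PP^3$, so Proposition \ref{curvein3zip} bounds its points of height $\le B$ by $cd^{3/4}B^2(\log B)^\kappa$. No height distortion arises, since heights are measured in $\PP^3$. Summing over the $cB^{4/3}$ planes gives $cd^{3/4}B^{10/3}(\log B)^\kappa$, again too large. We therefore should use fewer planes, balancing against the degree, which is the main obstacle.

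The fix is to exploit that planes of large determinant contain few points. Partition the planes from the proof of Lemma \ref{planecover} by determinant $h$ in dyadic ranges, running from $h\ll 1$ up to $h\ll B^{1/3}$. A plane with lattice $\Lambda$ of determinant $h$ contains at most $cB^3/h$ points in total by \cite[Lemma 1 (v)]{Heath-Brown-Ann}. It also contains at most $cd^{3/4}B^2(\log B)^\kappa$ points of $X$ by Proposition \ref{curvein3zip}. By \cite[Theorem 1]{Schmidt-lattices} there are $\asymp h^4$ planes of determinant $\asymp h$. The dyadic range $h$ therefore contributes
$$\ll h^4\min\bigl(B^3/h,\ d^{3/4}B^2(\log B)^\kappa\bigr).$$
Set $H=d^{1/4}$. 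For $h\le H$, use the curve bound: this gives $\sum h^4 d^{3/4}B^2(\log B)^\kappa\ll dB^2(\log B)^\kappa$, which is $\le d^{3/4}B^3$ since $d^{1/4}\le B$. For $h>H$, the total count of points lying on planes of determinant in $(H,B^{1/3}]$ is not bounded by the $\min$ directly; instead we need that every point lies on some plane of small determinant. I expect this to be the main obstacle. If it fails, we fall back on the plain sum over $h$, which would lose a power, and the true argument would then come from applying Proposition \ref{curveinnzip} in a form uniform over subplanes.
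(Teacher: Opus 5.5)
Your proposal has a genuine gap: the argument never closes, and you leave the large-determinant range as an open obstacle. The missing idea is that Proposition \ref{curvein3zip} is sublinear in the degree. You should therefore apply it \emph{once}, to the union of all the slices, instead of applying it to each slice and summing.

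After discarding planar components (your use of Proposition \ref{planecount} there is fine), let $P_1,\dots,P_M$, with $M\le cB^{4/3}$, be the planes of Lemma \ref{planecover}. Put $C=\bigcup_j (X\cap P_j)$. No $P_j$ is a component of $X$, so each $X\cap P_j$ is a curve of degree at most $d$. Hence $C$ is a curve in $\PP^3$ of degree at most $cdB^{4/3}$. Since the $P_j$ cover $\PP^3(\QQ,B)$, we have $X(\QQ,B)\subset C(\QQ,B)$. Proposition \ref{curvein3zip} then gives
$$\#X(\QQ,B)\le c\,(dB^{4/3})^{3/4}B^2(\log B)^\kappa=c\,d^{3/4}B^3(\log B)^\kappa,$$
which is the statement; this is exactly the paper's proof. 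One curve of degree $dM$ costs a factor $M^{3/4}=B$, whereas $M$ curves of degree $d$ cost $M=B^{4/3}$. That extra $B^{1/3}$ is precisely what your per-plane sum $cd^{3/4}B^{10/3}(\log B)^\kappa$ loses. No dyadic decomposition and no uniform version of Proposition \ref{curveinnzip} is needed.

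The dyadic route also contains an error, and per-plane estimates cannot rescue it. With $\asymp h^4$ planes of determinant $\asymp h$, one gets $\sum_{h\le H}h^4d^{3/4}B^2\asymp H^4d^{3/4}B^2=d^{7/4}B^2$, not $dB^2$. This is at most $d^{3/4}B^3$ only if $d\le B$, which you have not arranged. For $h\le H$ the lattice count would actually have sufficed, since $\sum_{h\le H}h^4\cdot B^3/h\ll H^3B^3=d^{3/4}B^3$. The problem is the remaining range. Planes of determinant $\le H=d^{1/4}$ contain only $\ll H^3B^3$ points in total, so when $d<B^{4/3}$ they miss most of $\PP^3(\QQ,B)$, and the planes with $h\asymp B^{1/3}$ cannot be avoided. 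Those $\asymp B^{4/3}$ planes contribute $\min(B^4,\,d^{3/4}B^{10/3})$ under your $\min$ bound, which exceeds $d^{3/4}B^3$ whenever $d<B^{4/3}$. The saving has to come from merging the slices into a single curve, not from a finer count plane by plane.
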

    \begin{proof}
        By Proposition \ref{planecount} we may assume $X$ has no linear factors. Then by cutting $X$ with the $cB^\frac{4}{3}$ planes jointly containing $\PP^3(\QQ,B)$ provided by Lemma \ref{planecover} we obtain a curve $C$ of degree at most $cdB^\frac{4}{3}$ such that $X(\QQ, B) \subset C(\QQ, B)$. Now Proposition \ref{curvein3zip} yields the result.
    \end{proof}

    In order to establish sublinear Schwartz-Zippel for curves in all dimensions we distinguish three cases: linear components which are treated by Proposition \ref{planecount}, non-linear components that lie in planes and components that do not lie in planes. The second case is the most involved and is treated in the following Lemma.

    \begin{lem}\label{curvesinplanes}
        Let $C$ be a projective curve of degree $d$ in $\PP_\QQ^n$ that does not have any linear components and such that each irreducible component of $C$ is contained in a plane. Let $B\ge 2$ be given. Then one has $$\#C(\QQ,B) \le c(n)d^\frac{n}{n+1}B^2(\log B)^\kappa$$ for some constants $c(n)$ and $\kappa(n)$.
    \end{lem}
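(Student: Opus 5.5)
Write $C=\bigcup_{j} C_j$ with $C_j$ irreducible of degree $d_j\ge 2$, each $C_j$ contained in a plane $P_j$ (unique since $C_j$ is not a line), and $\sum_j d_j\le d$. Group the components according to their plane: for each plane $P$ let $D_P=\sum_{P_j=P} d_j$, so that $\sum_P D_P\le d$. The idea is to combine two estimates. Inside a fixed plane we have the sublinear Schwartz-Zippel of Corollary \ref{curvein2zip}. Across different planes we use Proposition \ref{planecount} (with $k=2$), which controls the total number of rational points on many planes. For a single plane $P$ the points of $C$ on $P$ satisfy two bounds. The first is $\#P(\QQ,B)\le c B^3/\det(\Lambda_P)$, by \cite[Lemma 1 (v)]{Heath-Brown-Ann}. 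The second comes from identifying $P$ with $\PP^2$ via a basis of $\Lambda_P$. Using a reduced basis $b_1,b_2,b_3$ with $|b_1||b_2||b_3|\asymp \det\Lambda_P$, a point of height $\le B$ has coordinates of size $\ll B/|b_i|$. Applying the argument of Corollary \ref{curvein2zip} (Lemma \ref{planecover} for a box, then Bézout on lines) in these skewed coordinates, I aim for a bound of the shape $\ll D_P^{2/3}B^2\cdot(\text{factor depending on } |b_i|)$, or at least $\ll D_P^{2/3}B^2$.

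Next I dyadically decompose in the planes. For each dyadic range $\det\Lambda_P\in(T,2T]$ and each dyadic range $D_P\in(D,2D]$, let $M$ be the number of such planes. Then $M\le d/D$, and by Schmidt \cite{Schmidt-lattices} also $M\ll T^{n+1}$. The contribution of this block is at most
\[
\min\Bigl(M D^{2/3}B^2,\; M B^3/T,\; \text{Prop.~\ref{planecount} bound } \ll M^{\frac{n}{n+1}}B^3\Bigr).
\]
The last bound holds since these are $M$ planes. The first term gives $M D^{2/3}\le M^{1/3}d^{2/3}$. Balancing $M^{1/3}d^{2/3}B^2$ against $M^{n/(n+1)}B^3$ is not enough on its own, so I also use the Bézout-type bound $d B^{(n+1)/n}$ from hyperplane covers, as in Proposition \ref{CG5b}. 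Alternatively, I cover each plane's points by lines within that plane, using the in-plane version of Lemma \ref{planecover}; this gives roughly $(B^3/T)^{1/2}$ lines, each meeting $C\cap P$ in $\le D_P$ points. Optimising these over $T,D,M$ should give $\ll d^{n/(n+1)}B^2$ per block. The $(\log B)^\kappa$ comes from the $O(\log B\cdot\log d)$ dyadic blocks; when $d\ge B^{n+1}$ the trivial bound suffices, so $\log d\ll\log B$.

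I expect the main obstacle to be the optimisation itself, i.e. checking that in every regime of $(T,D,M)$ some available bound is $\le d^{n/(n+1)}B^2$. The dangerous regime is many planes of moderately small determinant, each carrying a moderate degree. In that regime neither the per-plane Corollary \ref{curvein2zip} nor Proposition \ref{planecount} wins alone. My first attempt is to prove a refined in-plane bound of the form $\#(C\cap P)(\QQ,B)\ll \min\bigl(D_P^{2/3}(B^3/T)^{2/3},\,B^3/T\bigr)$, by rescaling the lattice so that $P(\QQ,B)$ behaves like $\PP^2(\QQ,B')$ with $B'^3\asymp B^3/T$. Summing over the $\ll T^{n+1}$ planes with $T\approx (d/D)^{1/(n+1)}$ should then reproduce the exponent $\frac{n}{n+1}$, exactly as in the proof of Proposition \ref{planecount}. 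If that rescaling is only available up to logarithmic losses in the successive minima, those losses are absorbed into $(\log B)^\kappa$.
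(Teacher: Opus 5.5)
\textbf{There is a genuine gap.} The decisive step, the optimisation over $(T,D,M)$, is never carried out (``should give''). The one concrete plan you give for the dangerous regime cannot produce the exponent $\frac{n}{n+1}$.

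\emph{Why the refined bound fails.} Your per-plane bound $\min\bigl(D_P^{2/3}(B^3/T_P)^{2/3},B^3/T_P\bigr)$ is a sublinear Schwartz--Zippel bound. Such bounds are sharp for unions of lines, so it never uses the hypothesis that $C$ has no linear components, which is the whole point of the lemma. Concretely, place one irreducible conic in each of the $\asymp d$ planes of smallest determinant, so $D_P=2$ and $T_P\lesssim d^{1/(n+1)}$. Then:
\begin{itemize}
\item By Schmidt's count, your bound becomes a sum of $T^{n+1}\cdot B^2T^{-2/3}$ over dyadic $T\lesssim d^{1/(n+1)}$. This is $\asymp d^{\frac{3n+1}{3n+3}}B^2$, too large by a factor $d^{\frac{1}{3n+3}}$.
\item Proposition~\ref{planecount} only gives $d^{\frac{n}{n+1}}B^3$.
\item For $n\ge 3$ the hyperplane bound $dB^{(n+1)/n}$ of Proposition~\ref{CG5b} is not available. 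A plane curve lies in many hyperplanes, possibly ones from the cover of Lemma~\ref{planecover}, and then B\'ezout gives nothing.
\end{itemize}
In general, your summation over planes with $T\lesssim (d/D)^{1/(n+1)}$ yields $d^{\frac{3n+1}{3n+3}}D^{-\frac{n-1}{3n+3}}B^2$. This is admissible only when $D\ge d^{1/(n-1)}$, which is exactly the threshold at which the paper changes strategy.

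\emph{How the paper closes the gap.} It sorts components by degree rather than by plane, and uses genuine dimension-growth bounds for irreducible non-linear curves:
\begin{itemize}
\item \cite[Proposition 5]{CGlaz} for components of degree $\le\log B$;
\item otherwise \cite[Theorem 1]{BinCluKat}, giving $\ll e^2(\log B)^\kappa$ per component of degree $e$. After dyadic grouping, the components of degree $\asymp b$ contribute $\ll bd(\log B)^\kappa$.
\end{itemize}
This already suffices for $b\le d^{1/(n-1)}$, using $d\le B^{(n^2-1)/n}$. Only for larger $b$ is it played off against Proposition~\ref{planecount}, applied to the $\le 2d/b$ planes containing those components.

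\emph{How your own set-up could be rescued.} Use the ingredient you mention only in passing, and never pass to $D^{2/3}$. The in-plane line cover gives $\#C_P(\QQ,B)\ll D_P\bigl((B^3/T_P)^{1/2}+1\bigr)$, which is linear in $D_P$. You would still need to prove that $P(\QQ,B)$ is covered by $\ll (B^3/T_P)^{1/2}+1$ lines of $P$. This works via a reduced basis, using the lines through $[b_1]$ when the plane is very skewed. Then split once at a threshold $T_0$: use $\ll B^3/T_P$ for $T_P\le T_0$ and the line bound for $T_P>T_0$. This gives $\ll B^3T_0^n+dB^{3/2}T_0^{-1/2}+d$. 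With $T_0=(dB^{-3/2})^{2/(2n+1)}$ this becomes $d^{\frac{2n}{2n+1}}B^{\frac{3n+3}{2n+1}}+d$, which is $\le d^{\frac{n}{n+1}}B^2$ exactly when $d\le B^{(n^2-1)/n}$. For $d\le B^{3/2}$ use the line bound alone; for $d\ge B^{(n^2-1)/n}$ the trivial bound suffices. That would be an elementary proof without the $(\log B)^\kappa$ factor, but as written your proposal contains neither this argument nor the paper's.
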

    \begin{proof}
        Note that we may assume that $d \leq B^\frac{(n+1)(n-1)}{n}\leq B^{n+1}$ since if $d \geq B^\frac{(n+1)(n-1)}{n}$ we have $$\#C(\QQ,B) \leq \#\PP^n(\QQ,B)\leq cB^{n+1}\leq cd^\frac{n}{n+1}B^2.$$

        Write $C = C_1 \cup C_2 \cup C_3$, where $C_1$ consists of all components of degree $\leq \log B$, $C_2$ of all components of degree between $\log B$ and $d^\frac{1}{n-1}$ and $C_3$ of all components of degree $\geq d^\frac{1}{n-1}$.

        By \cite[Proposition 5]{CGlaz}, each component of $C_1$ contains no more than $c(\log B)^{\frac{4}{3}+\kappa}B$ points of height $\leq B$. Since $C_1$ has at most $d$ components this yields $\#C_1(\QQ, B) \leq cdB(\log B)^{\frac{4}{3}+\kappa}$. Now we recover the result since $B \geq d^\frac{1}{n+1}$.

        Write $C_2$ as the union of roughly $\log d$ curves $C_i$ with the property that for \\ $a_i := \min\{\deg C'|C' \text{ is a component of } C_i \}$ and $b_i:= \max\{deg C'|C' \text{ is a component of } C_i \}$ we have $2a_i \geq b_i$. Then $C_i$ consists of at most $\frac{d}{a_i} \leq \frac{2d}{b_i}$ factors, each of degree $\leq b_i$. By \cite[Theorem 1]{BinCluKat}, each of these factors contain no more than $cb_i^2(\log B)^\kappa$ points of height $\leq B$. Summing over all factors yields $$\#C_i(\QQ,B)\leq 2cb_id(\log B)^\kappa \leq 2cd^\frac{n}{n-1}(\log B)^\kappa \leq 2cd^\frac{n}{n+1}B^2(\log B)^\kappa$$ where we used that $d \leq B^\frac{(n+1)(n-1)}{n}$.

        Write $C_3$ as the union of roughly $\log d$ curves $C_j$ with the property that for \\ $a_j := \min\{\deg C'|C' \text{ is a component of } C_j \}$ and $b_j:= \max\{deg C'|C' \text{ is a component of } C_j \}$ we have $2a_j \geq b_j$. Then $C_j$ consists of at most $\frac{d}{a_j} \leq \frac{2d}{b_j}$ factors, each of degree $\leq b_j$. By \cite[Theorem 1]{BinCluKat}, each of these factors contains no more than $cb_j^2(\log B)^\kappa$ points of height $\leq B$. Summing over all factors yields $\#C_j(\QQ,B)\leq 2cb_jd(\log B)^\kappa$. If now $b_j^{\frac{2n+1}{3(n+1)}}d^{\frac{1}{3(n+1)}} \leq B$ then we recover $$\#C_j(\QQ,B)\leq 2cb_j^{-\frac{n-1}{3(n+1)}}d^\frac{3n+1}{3(n+1)}B^2(\log B)^\kappa \leq 2cd^\frac{n}{n+1}B^2(\log B)^\kappa$$ since $b_j \geq d^\frac{1}{n-1}$.

        In the remaining case, note that $C_j$ is contained in at most $\frac{d}{a_j} \leq \frac{2d}{b_j}$ planes. By Proposition \ref{planecount}, these planes jointly contain no more than $c\left(\frac{2d}{b_j}\right)^\frac{n}{n+1}B^3$  points. This yields $$\#C_j(\QQ, B) \leq c\left(\frac{2d}{b_j}\right)^\frac{n}{n+1}B^3 \leq 2^\frac{n}{n+1}cb_j^{-\frac{n-1}{3(n+1)}}d^\frac{3n+1}{3(n+1)}B^2\leq c'd^{\frac{n}{n+1}}B^2.$$

        Now summing the obtained results for $C_1, C_i$ and $C_j$ yields $\#C(\QQ,B) \leq c''d^\frac{n}{n+1}B^2(\log B)^\kappa \log d$, which yields the result since we assumed $d \leq B^\frac{(n+1)(n-1)}{n}$.
    \end{proof}

    Note that one may take $\kappa =12+\frac{(n+1)(n-1)}{n} \le n+13$ in Lemma \ref{curvesinplanes}.

    We can now combine Lemmas \ref{planecover}, \ref{curvesinplanes} and Proposition \ref{planecount} to obtain sublinear Schwartz-Zippel for curves.
    
    \begin{prop}[Sublinear projective Schwartz-Zippel for curves in $\PP^n$]\label{curveinnzip}
        Let $C$ be a projective curve of degree $d$ in $\PP_\QQ^n$ for some $n\ge 3$ and let $B \ge 1$ be given. Then one has $$\#C(\QQ, B) \le cd^{\frac{3(n-1)}{3n-1}}B^2 (\log B)^\kappa$$ for some constants $c(n)$ and $\kappa(n)$.
    \end{prop}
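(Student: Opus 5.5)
The plan is to split $C$ into three kinds of components and bound each kind separately: linear components, non-linear components contained in a plane, and components not contained in any plane. There are at most three pieces, so it is enough to bound each by $c\,d^{\alpha}B^2(\log B)^\kappa$, where $\alpha:=\frac{3(n-1)}{3n-1}$. The linear components are handled by Proposition \ref{planecount} with $k=1$, which gives $c\,d^{\frac{n}{n+1}}B^2$. The non-linear planar components are handled by Lemma \ref{curvesinplanes}, which gives $c\,d^{\frac{n}{n+1}}B^2(\log B)^\kappa$. In both cases it remains to note that $\frac{n}{n+1}\le \alpha$ for $n\ge 3$. Cross-multiplying, this is $n(3n-1)\le 3(n-1)(n+1)$, i.e.\ $n\ge 3$, with equality at $n=3$.

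For the non-planar part $C''$, let $C'$ be an irreducible component of degree $e$. I will show $\#C'(\QQ,B)\le c\,e\,B^{4/3}$. The linear span $\Pi$ of $C'$ is defined over $\QQ$, because the span of a $\QQ$-irreducible curve is Galois-stable. Since $C'$ lies in no plane, $\Pi\cong\PP^m$ with $3\le m\le n$.

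I then cover $\Pi(\QQ,B)$ by about $B^{\frac{m+1}{m}}$ hyperplanes of $\Pi$. Each such hyperplane meets $C'$ properly, because $C'$ spans $\Pi$. A linear section of the curve $C'$ therefore has at most $e$ points, which gives $\#C'(\QQ,B)\le c\,e\,B^{\frac{m+1}{m}}\le c\,e\,B^{4/3}$.

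To get this covering, I use the lattice correspondence: $\Pi(\QQ)$ corresponds to a primitive lattice $\Lambda\subset\ZZ^{n+1}$ of rank $m+1$. I then rerun the proof of Lemma \ref{planecover} inside $\Lambda$. For each $x\in\Lambda(B)$, take the orthogonal complement of $x$ inside the dual of $\Lambda$, keep the $1$ shortest successive-minimum vector, and count the resulting rank-$m$ sublattices of $\Lambda$ using Schmidt's theorem relative to $\Lambda$. Since $\det\Lambda\ge 1$, this should only help. I expect this step to be the main technical point. It is needed because hyperplanes of $\PP^n$ taken directly from Lemma \ref{planecover} might contain $C'$, so one cannot simply intersect $C'$ with them.

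Summing over the components of $C''$ gives $\#C''(\QQ,B)\le c\,d\,B^{4/3}$. I also have the trivial bound $\#C''(\QQ,B)\le cB^{n+1}$.

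It remains to balance these two bounds. If $d^{1-\alpha}\le B^{2/3}$, then $dB^{4/3}\le d^{\alpha}B^2$ and we are done. Otherwise $d> B^{\frac{2}{3(1-\alpha)}}=B^{\frac{3n-1}{3}}$, because $1-\alpha=\frac{2}{3n-1}$. In that case $d^{\alpha}> B^{n-1}$, since $\frac{3n-1}{3}\cdot\frac{3(n-1)}{3n-1}=n-1$, and so $B^{n+1}\le d^{\alpha}B^2$.

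Combining the three parts proves the proposition, with the logarithmic factor coming only from Lemma \ref{curvesinplanes}. As an optional refinement, one could also use Proposition \ref{CG5b} on each component to get an intermediate bound $d\,a^{1/4}B(\log B)^\kappa$, but the exponents above show this is not needed.
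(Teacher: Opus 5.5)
Your proof follows the paper's. You split $C$ into linear components (Proposition \ref{planecount}), non-linear planar components (Lemma \ref{curvesinplanes}), and components whose span $\Pi\cong\PP^m$ has $m\ge 3$. The last are cut by $cB^{(m+1)/m}\le cB^{4/3}$ hyperplanes of $\Pi$, each meeting the component in at most $e$ points. You then balance at $d^{2/(3n-1)}=B^{2/3}$ against the trivial bound $cB^{n+1}$, exactly as the paper does.

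You are right that Lemma \ref{planecover} cannot be quoted verbatim for $\Pi$ when $m<n$; the paper simply asserts it. However, the route you sketch for this step does not work as stated. The number of primitive vectors of $\Lambda^*$ below the Minkowski bound $(cB/\det\Lambda)^{1/m}$ depends on the successive minima of $\Lambda$, not only on $\det\Lambda$. It is not uniformly $\ll B^{(m+1)/m}$, and it can even exceed $B^{4/3}$ when $m\ge 4$. For example, take $\Lambda\subset\ZZ^7$ spanned by $e_1,e_2,e_3,u=e_4+Be_5,w=e_6+Be_7$. Then $\det\Lambda\asymp B^2$, so the bound is $\asymp B^{-1/4}$. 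But $\Lambda^*$ contains the orthogonal vectors $u/|u|^2$ and $w/|w|^2$ of length $\asymp B^{-1}$, which give $\asymp B^{3/2}$ distinct candidate hyperplanes. Only $O(1)$ of them actually arise from points of $\Lambda(B)$, so a finer count might rescue the idea. A Schmidt-type count of all sublattices of bounded determinant cannot, so ``$\det\Lambda\ge1$ only helps'' is false.

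The fix is elementary. Choose $m+1$ coordinates whose restrictions to the affine cone of $\Pi$ form a basis of its dual. The corresponding coordinate projection $p$ then restricts to a $\QQ$-isomorphism $\Pi\to\PP^m$. Since $p$ only forgets coordinates and is injective on the cone, it maps $\Pi(\QQ,B)$ injectively into $\PP^m(\QQ,B)$. Pulling back via $p|_\Pi$ the $cB^{(m+1)/m}$ hyperplanes that Lemma \ref{planecover} gives in $\PP^m$ yields $\QQ$-rational hyperplanes of $\Pi$ covering $\Pi(\QQ,B)$. None of them contains $C'$: a rational hyperplane containing one geometric component of $C'$ contains all its conjugates, contradicting that $C'$ spans $\Pi$. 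With this replacement your argument is complete.
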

    \begin{proof}
        Write $C = C_1 \cup \dots \cup C_{n}$ where $C_k$ consists of all components that lie in a $k$-plane, but not in a $k-1$-plane. Hence $C_1$ consists of all linear components, $C_2$ of all non-linear components that lie in a plane and so on. It suffices to prove the proposition for each of these curves separately and sum the results.

        For $C_1$ the result follows from Proposition \ref{planecount}. For $C_2$ the result follows from Lemma \ref{curvesinplanes}. For $3 \le k \le n$ we proceed as follows. By Lemma \ref{planecover}, all points of height $\leq B$ in a $k$-plane can be covered by $cB^\frac{k+1}{k}$ $k-1$-planes. Intersecting each component of $C_k$ with $cB^\frac{k+1}{k}$ $k-1$-planes yields $$\#C_k(\QQ,B)\leq cdB^\frac{k+1}{k} \le cdB^\frac{4}{3}.$$ If now $d^\frac{2}{3n-1} \leq B^\frac{2}{3}$ then the result follows. If $d^\frac{2}{3n-1} \ge B^\frac{2}{3}$ then we obtain $$\#C(\QQ, B) \le \#\PP^n(\QQ,B) \le cB^{n+1} \le cd^{\frac{3(n-1)}{3n-1}}B^2.$$
    \end{proof}

    For $n = 3$ we recover Proposition \ref{curvein3zip}. For $n\ge4$ there is still room for improvement in the exponent of $d$. Note that without Lemma \ref{curvesinplanes} a similar proof would lead to the exponent $\frac{2(n-1)}{2n-1}$ in Proposition \ref{curveinnzip}.
 
    We now move on to dimension growth for surfaces. Combining Propositions \ref{curveinnzip} and \ref{gendetmeth} yields the following result.

    \begin{prop}[Subquadratic dimension growth for surfaces in $\PP^n$]\label{surfinn}
        Let $X$ be an irreducible projective surface of degree $d>3$ in $\PP_\QQ^n$ and let $B \ge 2$ be given. Then one has $$\#X(\QQ,B) \le cd^{\frac{6(n-1)}{3n-1}}B^2 (\log B)^\kappa$$ for some constants $c(n)$ and $\kappa(n)$.
    \end{prop}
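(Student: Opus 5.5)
Proposition \ref{surfinn} will follow the template of the proof of Theorem \ref{CDDG}, with Lemma \ref{SZ} replaced by the sublinear bound of Proposition \ref{curveinnzip}. Write $e = \frac{3(n-1)}{3n-1}$; the target bound is $d^{2e}B^2(\log B)^\kappa$. The plan has three steps.

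\textbf{Step 1: reduce to the range where Proposition \ref{gendetmeth} applies.}

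First suppose $d \ge B^{E}$ for a suitable $E$. Then the trivial bound
$$\#X(\QQ,B)\le \#\PP^n(\QQ,B)\le cB^{n+1}$$
already implies the claim, provided $d^{2e}B^2 \ge B^{n+1}$. This holds as soon as $d \ge B^{(n-1)/(2e)} = B^{(3n-1)/6}$, so we take $E=(3n-1)/6$.

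Next suppose $d \le (\log B)^N$, with $N=N(n,2)$ from Proposition \ref{gendetmeth}. Then Theorem \ref{CDDG} gives
$$\#X(\QQ,B)\le cd^2B^2(\log B)^{\kappa}\le cB^2(\log B)^{\kappa+2N},$$
which is admissible after enlarging $\kappa$. Here $d>3$ is used so that $d\ge 4$ is met.

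\textbf{Step 2: cut $X$ with auxiliary hypersurfaces.}

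In the remaining range $(\log B)^N<d<B^E$, Proposition \ref{gendetmeth} (with $m=2$) supplies at most $c(\log B)^\kappa$ hypersurfaces $h_i$. Each has degree $\le d$, none contains $X$, and together they contain $X(\QQ,B)$.

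Since $X$ is irreducible and $X\not\subset h_i$, each intersection $C_i = X\cap h_i$ is a curve, possibly reducible. By Bézout, $\deg C_i\le d^2$. Hence
$$X(\QQ,B)\subset \bigcup_i C_i(\QQ,B).$$

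\textbf{Step 3: count points on each curve.}

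For $n\ge 3$, apply Proposition \ref{curveinnzip} to each $C_i$:
$$\#C_i(\QQ,B)\le c\,(d^2)^{\frac{3(n-1)}{3n-1}}B^2(\log B)^{\kappa'} = c\,d^{\frac{6(n-1)}{3n-1}}B^2(\log B)^{\kappa'}.$$
Summing over the $c(\log B)^\kappa$ curves gives the claim with $\kappa$ replaced by $\kappa+\kappa'$.

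For $n=2$ the statement is vacuous, since an irreducible surface in $\PP^2$ is $\PP^2$ itself, of degree $1$. If one wants a uniform treatment anyway, Corollary \ref{curvein2zip} gives exponent $2/3\cdot 2 = 4/3 \le 6/5$? It does not: $6/5<4/3$. So $n=2$ should simply be excluded as vacuous, or the case $n=2$ noted separately.

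\textbf{Main obstacle.}

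The delicate point is Step 1: checking that the range $d<B^E$ required by Proposition \ref{gendetmeth} matches the trivial-bound threshold. This only requires choosing $E$ depending on $n$, which the proposition allows, with constants $c$ depending on $E$ and hence only on $n$.

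A second point to verify is that Proposition \ref{curveinnzip} applies to reducible, non-pure curves. Any isolated points arising in $X\cap h_i$ are absent because the intersection is proper, so each $C_i$ is a genuine curve and the proposition applies.
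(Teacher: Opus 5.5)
Your proposal is correct and follows the paper's proof essentially step for step: the trivial bound when $d \ge B^{(3n-1)/6}$, Theorem~\ref{CDDG} when $d \le (\log B)^N$, and otherwise cutting $X$ with the hypersurfaces of Proposition~\ref{gendetmeth} and applying Proposition~\ref{curveinnzip} to each resulting curve of degree at most $d^2$ (the paper writes $d(d-1)$, which makes no difference). Your observation that the case $n=2$ is vacuous is a harmless addition that the paper leaves implicit.
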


    \begin{proof}
        We may assume that $d < B^\frac{3n-1}{6}$ since if $d \ge B^{\frac{3n-1}{6}}$ we have $$\#X(\QQ, B) \le \#\PP^n(\QQ, B) \le cB^{n+1}\le cd^\frac{6(n-1)}{3n-1}B^2.$$ We may further assume $(\log B)^N<d$ with $N=N(n, 2)$ as in Proposition \ref{gendetmeth} since if $(\log B)^N \ge d$ the result follows from Theorem \ref{CDDG}. Now let $h_i$ be the hypersurfaces from Proposition \ref{gendetmeth}. By Bézout's Theorem, each intersection $X_i$ of $X$ with $h_i$ is a curve of degree $\le d(d-1)$. Now by Proposition \ref{curveinnzip} we find $$\#X_i(\QQ, B) \le c(d(d-1))^\frac{3(n-1)}{3n-1}B^2(\log B)^\kappa \le cd^\frac{6(n-1)}{3n-1}B^2(\log B)^\kappa.$$ Now we are done by taking the union over all $X_i$.
    \end{proof}

    Note that the exponent of $d$ in the bound of Proposition \ref{CG5} is independent of $n$, whereas the exponent in the bound of Proposition \ref{surfinn} grows with $n$, even when assuming Conjecture \ref{SLSZ}. This indicates that the dimension growth bounds obtained here and in Theorem \ref{SLSZtoSQDG} are likely suboptimal for higher codimensions.

\subsection{Surfaces and Threefolds}
    We continue moving up to higher dimension by using Propositions \ref{gendetmeth}, \ref{planecount}, \ref{surfinn} and Lemma \ref{planecover} to prove sublinear Schwartz-Zippel bounds for surfaces.

    \begin{prop}[Sublinear projective Schwartz-Zippel for surfaces in $\PP^4$,$\PP^5$ and $\PP^6$]\label{surfaceinnzip}
        Let $X$ be a projective surface of degree $d$ in $\PP_\QQ^n$ for some $n \in \{4,5,6\}$ and let $B \ge 2$ be given. Then one has $$\#X(\QQ, B) \le cd^{\frac{18n-26}{15n-5}}B^3 (\log B)^\kappa$$ for some constants $c(n)$ and $\kappa(n)$.
    \end{prop}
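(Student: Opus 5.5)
The plan is to follow the pattern of Lemma \ref{curvesinplanes} and Proposition \ref{curveinnzip}: split $X$ into pieces according to the geometry and degree of their irreducible components, and bound each piece by the best of the available tools. These tools are the subquadratic dimension growth of Proposition \ref{surfinn}, the linear-variety count of Proposition \ref{planecount} together with Corollary \ref{planein3zip}, and a cutting argument via Lemma \ref{planecover} combined with the curve bounds of Proposition \ref{curveinnzip}. At the end I choose the thresholds so that every piece is bounded by $d^{\frac{18n-26}{15n-5}}B^3(\log B)^\kappa$.

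First I reduce to $d < B^{\gamma}$ for a suitable $\gamma=\gamma(n)$, using the trivial bound $\#\PP^n(\QQ,B)\le cB^{n+1}$ exactly as in the earlier proofs. Then I write $X=X_2\cup X_3\cup\dots\cup X_n$, where $X_k$ consists of the components whose linear span has dimension exactly $k$. For $X_2$ (planes) Proposition \ref{planecount} already gives the stronger exponent $\frac{n}{n+1}$. For $X_3$, each component is a surface in a $\PP^3$; I would group the components by the $3$-plane spanning them and apply Corollary \ref{planein3zip} in each, or, if there are many $3$-planes, bound the union of those $3$-planes directly by Proposition \ref{planecount}. For $k\ge 4$ I use the cutting trick: by Lemma \ref{planecover}, applied inside each spanning $k$-plane, the rational points of height $\le B$ are covered by $cB^{\frac{k+1}{k}}$ hyperplanes of that $k$-plane. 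None of these hyperplanes contains the component, so each slice is a curve of degree $\le$ the component's degree, which I bound with Proposition \ref{curveinnzip}. This cutting bound is weak when the degree is large relative to $B$, so it will only be used in one range of parameters.

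The core step is the dyadic decomposition by degree, copied from Lemma \ref{curvesinplanes}. Group the components into $O(\log d)$ families $X^{(j)}$ whose degrees lie in $[a_j,2a_j]$, so that each family has $\le d/a_j$ components. For each family I have three competing bounds:
\begin{enumerate}
\item[(i)] summing Proposition \ref{surfinn} over components gives $\frac{d}{a_j}\,a_j^{\frac{6(n-1)}{3n-1}}B^2(\log B)^\kappa$;
\item[(ii)] bounding the union of the at most $d/a_j$ spanning linear spaces by Proposition \ref{planecount} gives roughly $(d/a_j)^{\frac{n}{n+1}}B^{k+1}$, which is useful when the degrees are large;
\item[(iii)] the cutting bound above.
\end{enumerate}
Low-degree components, with degree $\le (\log B)^N$, are handled as $C_1$ was in Lemma \ref{curvesinplanes}, using Theorem \ref{CDDG} with a polylogarithmic degree. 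For each $j$ I take the minimum of these bounds, and summing over $j$ costs only a $\log d\ll \log B$ factor.

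The main obstacle is the optimization: finding thresholds in $a_j$ and in $B$ relative to $d$ so that the minimum of (i)–(iii) is always at most $d^{\frac{18n-26}{15n-5}}B^3$, and checking that this works only for $n\le 6$. I would first try the ansatz where (i) and the trivial or cut bound are balanced. Setting $\frac{d}{a}a^{\frac{6(n-1)}{3n-1}}B^2$ equal to the competing bound determines a critical degree $a\approx d^{\alpha}B^{\beta}$, and substituting it back should produce the exponent $\frac{18n-26}{15n-5}$. The restriction $n\le 6$ should appear as the condition that this exponent is below $1$, or that the critical degree lies in the admissible range. For $n\ge7$ the condition fails, consistent with $e(n,3)=1$ in Theorem \ref{summary}.
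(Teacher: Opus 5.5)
Your plan has a genuine gap at the components of degree $2$ and $3$. The two per-component inputs you rely on, Proposition~\ref{surfinn} (which needs degree $>3$) and Theorem~\ref{CDDG} (which needs $d\ge 4$), do not apply to them. So your step ``low-degree components via Theorem~\ref{CDDG}'' does not cover them, and none of your other bounds can take over. For instance, take $B\approx d$ and let $X$ be a union of about $d/2$ quadric surfaces in pairwise distinct rational $3$-planes. Corollary~\ref{planein3zip} summed over the $3$-planes gives about $dB^3$, Proposition~\ref{planecount} gives about $d^{\frac{n}{n+1}}B^4$, and the trivial bound gives $B^{n+1}$. Each of these is larger than $d^{\frac{18n-26}{15n-5}}B^3(\log B)^\kappa$ by a power of $d$. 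Cubic scrolls spanning $4$-planes fail in the same way once $B>d^{4/5}$, where your cutting bound no longer suffices. The missing ingredient is a bound $\ll B^{2+\frac{2}{n}}$ for each individual quadric or cubic surface. The paper splits off these components and takes this bound from \cite[Theorem 2]{Heath-Brown-Ann} and \cite[Theorem 0.4]{Salberger-dgc}. Summing over components gives $\ll dB^{2+\frac{2}{n}}$, and combining this with the trivial bound $cB^{n+1}$ yields even $d^{\frac{n}{n+1}}B^3$.

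Your $X_3$ step also does not close as written. Take $n=4$, about $d^{1/2}$ components of degree about $d^{1/2}$ in distinct $3$-planes, and $B=d^{9/20}$. Then the bound (i) with the ambient exponent $\frac{6(n-1)}{3n-1}=\frac{18}{11}$, the bound (ii), Corollary~\ref{planein3zip} and the trivial bound all exceed $d^{\frac{46}{55}}B^3$ by a positive power of $d$. One repair is to apply Proposition~\ref{surfinn} intrinsically in each $3$-plane (exponent $\frac32$); together with (ii) and the trivial bound this does close for $n=4,5,6$. The paper instead uses Lemma~\ref{surfacesin3planes}. That lemma cuts each component with the auxiliary hypersurfaces of Proposition~\ref{gendetmeth}, applies Proposition~\ref{curveinnzip} to the union of all resulting curves, and plays this against Proposition~\ref{planecount}.

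For $k\ge 4$, by contrast, your ansatz is exactly the paper's argument, and no dyadic splitting is needed. When $B\le d^{4/5}$, apply Proposition~\ref{curveinnzip} once to the union of all slices, which is a single curve of degree $\le c\,dB^{\frac{k+1}{k}}$; applying it slice by slice loses the sublinearity. When $B\ge d^{4/5}$, sum Proposition~\ref{surfinn} over the components. The two regimes meet at $B=d^{4/5}$ because $2\cdot\frac{3(n-1)}{3n-1}-\frac{18n-26}{15n-5}=\frac{4}{5}$, and the resulting exponent is below $1$ exactly when $n\le 6$.
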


    Before proving this, notice that sublinear Schwartz-Zippel for hypersurfaces in $\PP^4$ follows readily from Proposition \ref{surfaceinnzip}, similarly as for hypersurfaces in $\PP^2$ and $\PP^3$.

    \begin{cor}[Sublinear projective Schwartz-Zippel for 3-folds in $\PP^4$]\label{3foldsin4zip}
        Let $X$ be a projective 3-fold of degree $d$ in $\PP_\QQ^4$ and let $B \ge 1$ be given. Then one has $$\#X(\QQ,B) \le cd^\frac{48}{55}B^4(\log B)^\kappa$$ for some absolute constants $c$ and $\kappa$.
    \end{cor}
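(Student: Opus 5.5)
We follow the pattern of Corollaries \ref{curvein2zip} and \ref{planein3zip}: slice the threefold by hyperplanes and apply Proposition \ref{surfaceinnzip} with $n=4$. That proposition gives exponent $\frac{18\cdot 4-26}{15\cdot 4-5}=\frac{46}{55}$ for surfaces in $\PP^4$.

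First, remove linear components. The union of the linear $3$-dimensional components consists of at most $d$ hyperplanes. By Proposition \ref{planecount} with $k=3$, $n=4$, these contribute at most $cd^{\frac45}B^4\le cd^{\frac{48}{55}}B^4$ points. So we may assume that $X$ has no component contained in a hyperplane.

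Next, by Lemma \ref{planecover} with $n=4$, $k=3$, the set $\PP^4(\QQ,B)$ is covered by $cB^{\frac54}$ hyperplanes $H_j$. For each $j$, the intersection $X\cap H_j$ is a proper intersection, since no component of $X$ lies in $H_j$. Hence it is a surface of degree at most $d$.

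There are two ways to count. The cheap option applies Proposition \ref{surfaceinnzip} to each slice separately, giving $cB^{\frac54}d^{\frac{46}{55}}B^3(\log B)^\kappa$. Comparing with the target, this is good when $B^{\frac54}\le d^{\frac{2}{55}}$, i.e. for small $B$. The better option takes the union of all slices as one surface $S$ of degree at most $cdB^{\frac54}$, as in Corollary \ref{planein3zip}. Proposition \ref{surfaceinnzip} then gives
$$\#X(\QQ,B)\le c(dB^{\frac54})^{\frac{46}{55}}B^3(\log B)^\kappa=cd^{\frac{46}{55}}B^{3+\frac{46}{44}}(\log B)^\kappa.$$
This exceeds $B^4$ by a power $B^{\frac{1}{22}}$.

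The main obstacle is therefore reconciling the $B$-exponent, and I would balance regimes. Set the union bound $d^{\frac{46}{55}}B^{4+\frac1{22}}$ against the target $d^{\frac{48}{55}}B^4$. The union bound suffices exactly when $B^{\frac1{22}}\le d^{\frac{2}{55}}$, i.e. $B\le d^{\frac{4}{5}}$. If instead $B\ge d^{\frac45}$, I would use a different argument. Lemma \ref{planecover} also covers $\PP^4(\QQ,B)$ by $cB^{\frac{5}{2}}$ planes ($k=2$), and $X$ meets each such plane in a curve of degree at most $d$. In this case I would apply Corollary \ref{curvein2zip} or Proposition \ref{curvein3zip} in suitable subspaces, or fall back to the trivial bound $cB^5$ when $d$ is large relative to $B$.

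Concretely, the regime $B^{\frac54}\le d$ (and its variants) is handled by the trivial bound, since then $B^5\le d^{\frac{48}{55}}B^4$ whenever $B\le d^{\frac{48}{55}}$. This covers all $B\le d^{\frac{48}{55}}$, which includes $B\le d^{\frac45}$ trivially. The genuinely needed range is $B\ge d^{\frac{48}{55}}$. There I would use the union-slicing bound, after first checking whether the exponent bookkeeping in Proposition \ref{surfaceinnzip} actually carries the claimed $B^3$ with the degree inflated by $B^{\frac54}$. Pinning down the precise exponent bookkeeping that yields $\frac{48}{55}$ is the step I expect to require care, and I would verify it by optimizing the exponent of $d$ across the two regimes.
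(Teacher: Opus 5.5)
Your first half matches the paper's proof. You discard hyperplane components via Proposition~\ref{planecount}, slice by the $cB^{5/4}$ hyperplanes of Lemma~\ref{planecover}, and apply Proposition~\ref{surfaceinnzip} to the union surface of degree $\le cdB^{5/4}$. This gives $cd^{46/55}B^{4+1/22}(\log B)^\kappa$, which suffices exactly when $B\le d^{4/5}$.

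The gap is the complementary range $B\ge d^{4/5}$, and your last paragraph gets the regimes backwards. The trivial bound $cB^5$ already covers all $B\le d^{48/55}$, which contains the whole range where slicing helps. So in the range $B\ge d^{48/55}$ that you call ``genuinely needed'', the union-slicing bound overshoots the target by the factor $B^{1/22}d^{-2/55}$. That factor is $\ge 1$ there and unbounded as $B\to\infty$, so no exponent bookkeeping will repair it.

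The $2$-plane alternative does not help either. Any bound of the form ``cut by $B^{\theta}$ linear spaces, then apply a sublinear Schwartz--Zippel bound to the union'' has $B$-exponent strictly above $4$. For instance, cutting with $cB^{5/2}$ planes and applying Proposition~\ref{curveinnzip} to the union curve gives $d^{9/11}B^{4+1/22}$. Such a bound cannot give $d^{48/55}B^4$ for fixed $d$ and $B\to\infty$. In addition, $X$ may contain some of those planes, so the slices need not be curves.

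What is missing is an input with $B$-exponent $3$ for large $B$. The paper applies the subquadratic dimension growth bound, Proposition~\ref{3foldsinn} with $n=4$, to each irreducible component of degree $\ge 4$. Summing over components gives $cd^{92/55}B^3(\log B)^\kappa$. For $B\ge d^{4/5}$ this is at most $cd^{92/55-44/55}B^4(\log B)^\kappa=cd^{48/55}B^4(\log B)^\kappa$. There is no circularity, since Proposition~\ref{3foldsinn} rests on Propositions~\ref{gendetmeth} and~\ref{surfaceinnzip}, not on this corollary.

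Because Proposition~\ref{3foldsinn} requires degree $>3$, you must also treat quadric and cubic components separately, as in the proof of Proposition~\ref{surfaceinnzip}. Use Heath-Brown's bound $c_\epsilon B^{3+\epsilon}$ for quadrics and Salberger's bound for cubics, sum over at most $d$ components, and balance against $cB^5$.

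Incidentally, once this dimension-growth input is available the slicing step is unnecessary. Balancing $cd^{92/55}B^3(\log B)^\kappa$ directly against the trivial bound $cB^5$, with crossover at $B=d^{46/55}$, already yields the stronger exponent $46/55$.
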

    \begin{proof}
        By Proposition \ref{planecount}, we may assume $X$ has no linear factors. Quadratic and cubic factors can be dealt with as in the proof of Proposition \ref{curveinnzip}. Then by cutting $X$ with the $cB^\frac{5}{4}$ planes jointly containing $\PP^4(\QQ,B)$ provided by Lemma \ref{planecover} we obtain a surface $X'$ of degree at most $cdB^\frac{5}{4}$ such that $X(\QQ, B) \subset X'(\QQ, B)$. Now Proposition \ref{surfaceinnzip} applied to $X'$ yields $$\#X(\QQ, B) \le cd^{\frac{46}{55}}B^{3+\frac{23}{22}} (\log B)^\kappa.$$ Combining this with the bound obtained by applying Proposition \ref{3foldsinn} to each component of degree $\ge 4$ yields the result.
    \end{proof}

    Similarly as for curves, we first discuss the case of components lying inside 3-planes separately. Without this, we would end up with the exponent $\frac{15n-21}{12n-4}$, which would not yield a sublinear result for $n=6$ and less sharp results for $n=4,5$.

    \begin{lem}\label{surfacesin3planes}
        Let $X$ be a projective surface of degree $d$ in $\PP_\QQ^n$ that does not have any linear, quadratic or cubic components and such that each irreducible component is contained in a 3-plane. Let $B \ge 2$ be given. Then one has $$\#X(\QQ, B) \le cd^{\frac{3(n-1)}{3n-1}}B^3 (\log B)^\kappa$$ for some constants $c(n)$ and $\kappa(n)$.
    \end{lem}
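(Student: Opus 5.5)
We follow the strategy of Lemma \ref{curvesinplanes}, one dimension up. First, if $d \ge B^{\frac{3n-1}{3(n-1)}\cdot(n-2)}$ the trivial bound $\#\PP^n(\QQ,B)\le cB^{n+1}$ already gives the claim, so we may assume $d$ is at most a fixed power of $B$. This keeps $\log d \ll \log B$ and makes the hypothesis $d<B^E$ of Proposition \ref{gendetmeth} available for every component.

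We then split $X$ into dyadic blocks $X_j$, $j \ll \log d$, by component degree. Within a block every component has degree between $a_j$ and $b_j$ with $b_j\le 2a_j$, so $X_j$ has at most $2d/b_j$ components. Each component is an irreducible surface of degree $\ge 4$ in a 3-plane, that is, a hypersurface in $\PP^3$.

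For blocks with $b_j \le (\log B)^N$ we use Theorem \ref{CDDG}. Each component then has at most $cb_j^2B^2(\log B)^\kappa$ points, so the block contributes at most $cdB^2(\log B)^{\kappa+N}$. This is acceptable once $d^{\frac{2n}{3n-1}}\le B$. In the opposite range the trivial bound is at least as strong, because $d$ is then large compared with $B$.

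For blocks with larger $b_j$ there are two estimates to balance.
\begin{itemize}
\item \emph{Dimension growth bound.} Proposition \ref{surfinn} with $n=3$ gives at most $cb_j^{3/2}B^2(\log B)^\kappa$ points per component. Summing over components, the block has at most $c\,d\,b_j^{1/2}B^2(\log B)^\kappa$ points.
\item \emph{3-plane bound.} The block lies in at most $2d/b_j$ three-planes, so Proposition \ref{planecount} with $k=3$ gives at most $c(d/b_j)^{\frac{n}{n+1}}B^4$ points.
\end{itemize}
Taking the minimum of the two, the worst case is where they are equal. We need to check that this minimum is always $\le d^{\frac{3(n-1)}{3n-1}}B^3$ up to logarithmic factors, eliminating $b_j$ and using the constraint $b_j\le d$.

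If the two-way minimum turns out to be insufficient, we add a third estimate. Proposition \ref{curveinnzip} with $n=3$ bounds a degree-$e$ curve in $\PP^3$ by $e^{3/4}B^2$ up to logs. Cutting each component with the $cB^{4/3}$ planes of Lemma \ref{planecover} in its 3-plane produces plane curves of total degree $\le b_j B^{4/3}$. Applying Corollary \ref{planein3zip} to each component gives $b_j^{3/4}B^3$ per component. Summed over the block this is $d\,b_j^{-1/4}B^3$, and for large $b_j$ it beats $d^{\frac{3(n-1)}{3n-1}}B^3$.

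The main obstacle is the exponent bookkeeping in this balancing step: the three regimes must combine to give exactly $\frac{3(n-1)}{3n-1}$ for every $n$. To handle it, we first treat $b_j\ge d^{\theta}$ for a suitable threshold $\theta$, using the bound $d\,b_j^{-1/4}B^3$. For $b_j<d^\theta$ we interpolate between the bound $d\,b_j^{1/2}B^2$ and the 3-plane count $(d/b_j)^{\frac{n}{n+1}}B^4$, choosing $\theta$ to make the estimates meet. If the exponents do not match exactly, we would use the geometric mean of the two bounds, as in the $C_3$ case of Lemma \ref{curvesinplanes}. Finally, summing over the $O(\log d)$ blocks costs one extra logarithm, which is absorbed into $(\log B)^\kappa$.
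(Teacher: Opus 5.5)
\textbf{The balancing step fails for every $n\ge 4$.} These are exactly the cases where the lemma is used. Put $e=\frac{3(n-1)}{3n-1}$ and ignore constants and logarithms.

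Your first two bounds are $d\,b_j^{1/2}B^2$ and $(d/b_j)^{\frac{n}{n+1}}B^4$. The product of their ratios to the target $d^eB^3$ is $d^{\frac{n+5}{(3n-1)(n+1)}}b_j^{-\frac{n-1}{2(n+1)}}$. So their minimum stays below the target for every $B$ only when $b_j\ge d^{\frac{2(n+5)}{(3n-1)(n-1)}}$.

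The other tools do not fill the remaining range:
\begin{itemize}
\item Your reduction $d<B^{\frac{(n-2)(3n-1)}{3(n-1)}}$ rescues the first bound only for $b_j\le d^{\frac{2(n+1)}{(n-2)(3n-1)}}$. For $n>3$ this exponent is strictly smaller than the previous one.
\item Your third bound $d\,b_j^{-1/4}B^3$ beats the target only for $b_j\ge d^{\frac{8}{3n-1}}$. That range is already covered by the two-way minimum.
\end{itemize}

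Concretely, take $n=4$, $b_j=d^{1/2}$ and $B=d^{17/40}$, which is admissible since $d<B^{22/9}$. The target is $d^{921/440}$. Your three bounds are $d^{21/10}$, $d^{21/10}$ and $d^{43/20}$, and the trivial bound is $d^{17/8}$. All of them exceed the target by a power of $d$, which no factor $(\log B)^\kappa$ absorbs. Geometric means cannot help, since a geometric mean is never below the minimum.

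The loss comes from treating the components of a block one at a time. Summing $b_j^{3/2}B^2$ over the $\asymp d/b_j$ components is linear in the number of components, so sublinearity is never exploited across components.

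\textbf{The missing idea is to aggregate before applying a sublinear bound.} This is what the paper does. For the components of degree $>(\log B)^N$, cut each one with its $\ll(\log B)^\kappa$ auxiliary hypersurfaces of degree $\le b_j$ from Proposition~\ref{gendetmeth}. The union of all resulting curves is a single curve in $\PP^n$ of degree $\ll d\,b_j(\log B)^\kappa$ that contains $X_j(\QQ,B)$. Applying Proposition~\ref{curveinnzip} once to this union gives $\ll (d\,b_j)^{e}B^2$ up to logarithms. This beats your $d\,b_j^{1/2}B^2$ whenever $b_j\le d^{4/(3n-5)}$, which includes the bad window above.

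With this bound the two-way comparison closes for all $b_j\le d$:
\begin{itemize}
\item If $b_j^{e}\le B$, you get $d^eB^3$ directly.
\item Otherwise $(d/b_j)^{\frac{n}{n+1}}B^4< d^{\frac{n}{n+1}}b_j^{e-\frac{n}{n+1}}B^3\le d^eB^3$, because $e-\frac{n}{n+1}=\frac{n-3}{(n+1)(3n-1)}\ge 0$.
\end{itemize}

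\textbf{A smaller slip.} For the low-degree blocks the correct condition is $d^{\frac{2}{3n-1}}\le B$, not $d^{\frac{2n}{3n-1}}\le B$. With your exponent, the claim that the trivial bound handles the complementary range is false. The correct condition, however, already follows from your initial reduction.
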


    \begin{proof}
        First notice that we may assume $d < B\frac{(n-2)(3n-1)}{3(n-1)}$ since otherwise we have $$\#X(\QQ, B) \le \#\PP^n(\QQ, B) \le cB^{n+1} \le cd^{\frac{3(n-1)}{3n-1}}B^3.$$

        Write $X = X_s\cup X_r$, where $X_s$ consists of all components of degree $ \le (\log B)^N$ and $X_r$ of all components of degree $> (\log B)^N$. Here $N = N(n, 2)$ from Proposition \ref{gendetmeth}.

        By Proposition \ref{surfinn}, each component of $X_s$ contains no more than $c(\log B)^{\frac{6(n-1)}{3n-1}N}B^2 (\log B)^\kappa$ points of height $\leq B$. Since $X_s$ has at most $\frac{d}{(\log B)^N}$ components this yields $$\#X_s(\QQ, B) \leq cd(\log B)^{\frac{3n-5}{3n-1}N}B^2.$$ Now we recover the result since $d < B\frac{(n-2)(3n-1)}{3(n-1)} \le B^{\frac{3n-1}{2}}$.

        Write $X_r$ as the union of roughly $\log d$ curves $X_i$ with the property that for \\ $a_i := \min\{\deg X'|X' \text{ is a component of } X_i \}$ and $b_i:= \max\{deg X'|X' \text{ is a component of } X_i \}$ we have $2a_i \geq b_i$. Then $X_i$ consists of at most $\frac{d}{a_i} \leq \frac{2d}{b_i}$ factors, each of degree $\leq b_i$. Now cut each component of $X_i$ with the $c(\log B)^\kappa$ hyperplanes from Proposition \ref{gendetmeth}. By Bézout's Theorem, this yields at most $\frac{2d}{b_i}$ curves of degree at most $cb_i^2(\log B)^\kappa$. Hence the union is a curve of degree at most $2cdb_i(\log B)^\kappa$, which by Proposition \ref{curveinnzip} contains no more than $2c(db_i)^\frac{3(n-1)}{3n-1}B^2(\log B)^\kappa$ points of height $\le B$. If we now suppose $ B>d^\frac{n-3}{2(3n-1)(n+1)}b_i^\frac{6n^2-n-3}{2(3n-1)(n+1)}$ we obtain $$\#X_i(\QQ, B) \le 2cd^\frac{6n^2-n-3}{2(3n-1)(n+1)}b_i^\frac{n-3}{2(3n-1)(n+1)}B^3(\log B)^\kappa \le 2cd^\frac{3(n-1)}{3n-1}B^3(\log B)^\kappa.$$

        In the remaining case note that $X_i$ is contained in at most $\frac{d}{a_i} \leq \frac{2d}{b_i}$ 3-planes. By Proposition \ref{planecount} these planes jointly contain no more than $c\left(\frac{2d}{b_j}\right)^\frac{n}{n+1}B^4$  points. This yields $$\#X_i(\QQ, B) \leq c\left(\frac{2d}{b_i}\right)^\frac{n}{n+1}B^4 \leq 2^\frac{n}{n+1}cd^\frac{6n^2-n-3}{2(3n-1)(n+1)}b_i^\frac{n-3}{2(3n-1)(n+1)}B^3(\log B)^\kappa \le 2cd^\frac{3(n-1)}{3n-1}B^3(\log B)^\kappa.$$

        Now summing the obtained results for $X_s$ and each $X_i$ yields $$\#X(\QQ,B) \leq c''d^\frac{3(n-1)}{3n-1}B^3(\log B)^\kappa \log d,$$ which yields the result since we assumed $d < B\frac{(n-2)(3n-1)}{3(n-1)}$.
    \end{proof}

    \begin{proof}[Proof of Proposition \ref{surfaceinnzip}]
        Write $X = X_q \cup X_c \cup X_2 \cup \dots \cup X_{n}$ where $X_q$ consists of all quadratic components, $X_c$ of all cubic components and $X_k$ consists of all components of degree $\ne 2,3$ that lie in a $k$-plane, but not in a $k-1$-plane. Hence $X_2$ consists of all linear components. It suffices to prove the proposition for each of these surfaces separately and sum the results.

        For each component $x$ of $X_q$ we have after projecting onto a 3-plane that by \cite[Theorem 2]{Heath-Brown-Ann}, $\#x(\QQ, B) \le c_\epsilon B^{2+\epsilon}$ for any $\epsilon >0$. By setting $\epsilon = \frac{2}{n}$ and summing over all factors we obtain $\#X_q(\QQ, B) \le c_\frac{2}{n}dB^{\frac{4+n}{2}}$. Combining this with the trivial bound $\#\PP^n(\QQ, B) \le cB^{n+1}$ yields $\#X_q(\QQ, B) \le c'd^\frac{n}{n+1}B^3$.

        For $X_c$ a similar argument can be employed, now relying on \cite[Theorem 0.4]{Salberger-dgc} with $\epsilon = \frac{2}{n}+1-\frac{2}{\sqrt{3}}$.

        For $X_2$ the result follows from Proposition \ref{planecount}. For $X_3$ the result follows from Lemma \ref{surfacesin3planes}. For $4 \le k \le n$ we proceed as follows. By Lemma \ref{planecover} all rational points of height $\leq B$ in a $k$-plane can be covered by $cB^\frac{k+1}{k}$ $k-1$-planes. Intersecting each component of $X_k$ with $cB^\frac{k+1}{k}$ $k-1$-planes yields a curve of degree $cdB^\frac{k+1}{k}$ and applying Proposition \ref{curveinnzip} to this curve yields $$\#X_k(\QQ,B)\leq cd^\frac{3(n-1)}{3n-1}B^{2+\frac{k+1}{k}\frac{3(n-1)}{3n-1}} \le cd^\frac{3(n-1)}{3n-1}B^{2+\frac{5}{4}\frac{3(n-1)}{3n-1}}.$$ If now $d^\frac{4}{5} \ge B$ then the result follows. If $d^\frac{4}{5} \le B$ then we apply Proposition \ref{surfinn} to each component to obtain $$\#X_k(\QQ, B) \le cd^{2\frac{3(n-1)}{3n-1}}B^2(\log B)^\kappa \le  cd^{\frac{18n-26}{15n-5}}B^3(\log B)^\kappa.$$
    \end{proof}

    We can in turn use Proposition \ref{surfaceinnzip} to prove subquadratic dimension growth for threefolds.

    \begin{prop}[Subquadratic dimension growth for 3-folds in $\PP^4$,$\PP^5$ and $\PP^6$]\label{3foldsinn}
        Let $X$ be an irreducible projective 3-fold of degree $d>3$ in $\PP_\QQ^n$ for some $n \in \{4,5,6\}$ and let $B \ge 2$ be given. Then one has $$\#X(\QQ,B) \le cd^{2\frac{18n-26}{15n-5}}B^3 (\log B)^\kappa$$ for some constants $c(n)$ and $\kappa(n)$.
    \end{prop}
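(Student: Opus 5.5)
We follow the template of Proposition \ref{surfinn}, now using Proposition \ref{surfaceinnzip} in place of Proposition \ref{curveinnzip}. Write $e:=\frac{18n-26}{15n-5}$, so the target bound is $cd^{2e}B^3(\log B)^\kappa$. For $n\in\{4,5,6\}$ one checks $e<1$ (namely $e=\frac{46}{55},\frac{64}{70},\frac{82}{85}$), and so $2e<2$. The plan is to dispose of the two extreme ranges of $d$ first, and then cut $X$ with the auxiliary hypersurfaces of Proposition \ref{gendetmeth} and count on the resulting surfaces.

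\emph{Large degree.} Suppose $d\ge B^{\frac{n-2}{2e}}$. Then
$$\#X(\QQ,B)\le \#\PP^n(\QQ,B)\le cB^{n+1}\le cd^{2e}B^3,$$
so we may assume $d<B^{\frac{n-2}{2e}}$. This gives $d<B^E$ with $E=E(n)$, as Proposition \ref{gendetmeth} requires.

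\emph{Small degree.} Suppose $d\le(\log B)^N$, with $N=N(n,3)$ as in Proposition \ref{gendetmeth}. Theorem \ref{CDDG} applies since $d\ge 4$, and gives
$$\#X(\QQ,B)\le cd^2B^3(\log B)^{\kappa}\le cd^{2e}B^3(\log B)^{\kappa+2N(1-e)}.$$
Enlarging $\kappa$ absorbs the extra logarithmic factor. So we may assume $(\log B)^N<d<B^E$.

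\emph{Main range.} Proposition \ref{gendetmeth} (with $m=3$) now gives at most $c(\log B)^{\kappa}$ hypersurfaces $h_i$ of degree $\le d$. Together they contain $X(\QQ,B)$, and $X\not\subset h_i$ for each $i$. Since $X$ is irreducible, each $X_i:=X\cap h_i$ is a surface, and by Bézout's theorem $\deg X_i\le d^2$. Proposition \ref{surfaceinnzip} applies to projective surfaces of arbitrary degree in $\PP^n$ for $n\in\{4,5,6\}$, reducible ones included. Applied to $X_i$ it gives
$$\#X_i(\QQ,B)\le c(d^2)^{e}B^3(\log B)^{\kappa}=cd^{2e}B^3(\log B)^{\kappa}.$$
Summing over the $c(\log B)^\kappa$ surfaces $X_i$ gives the claim, with $\kappa$ enlarged once more.

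\emph{Main obstacle.} Nearly all of the difficulty is already contained in Proposition \ref{surfaceinnzip}. What remains is bookkeeping. First, the hypersurfaces from Proposition \ref{gendetmeth} must meet $X$ properly. This holds because $X$ is irreducible and $X\not\subset h_i$, so every component of $X\cap h_i$ has dimension $2$. Second, Proposition \ref{surfaceinnzip} must be usable on a reducible $X_i$ of possibly huge degree, up to $d^2$. Its statement imposes no restriction on the degree, so this is fine. Third, the constants $c$ and $\kappa$ must depend only on $n$. The degree range $d<B^E$ used in Proposition \ref{gendetmeth} is fixed in terms of $n$ by the large-degree reduction, so this holds as well.
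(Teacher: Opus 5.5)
Your proposal is correct and follows the paper's proof essentially step for step: the trivial bound handles large $d$, Theorem \ref{CDDG} handles $d\le(\log B)^N$, and otherwise $X$ is cut with the hypersurfaces of Proposition \ref{gendetmeth} and Proposition \ref{surfaceinnzip} is applied to the surfaces $X\cap h_i$ of degree at most $d^2$. The only differences are cosmetic. Your large-degree threshold is $B^{\frac{n-2}{2e}}$ rather than the paper's $B^{\frac{(n-2)(n+1)}{2n}}$, which works because $\frac{n}{n+1}\le e$. You also track the extra factor $(\log B)^{2N(1-e)}$ in the small-degree case explicitly, which the paper leaves implicit.
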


    \begin{proof}
        We may assume that $d < B^\frac{(n-2)(n+1)}{2n}$ since if $d \ge B^\frac{(n-2)(n+1)}{2n}$ we have $$\#X(\QQ, B) \le \#\PP^n(\QQ, B) \le cB^{n+1}\le cd^{\frac{2n}{n+1}}B^3.$$ We may further assume $(\log B)^N<d$ with $N=N(n, 3)$ as in Proposition \ref{gendetmeth} since if $(\log B)^N \ge d$ the result follows from Theorem \ref{CDDG}. Now let $h_i$ be the hypersurfaces from Proposition \ref{gendetmeth}. By Bézout's Theorem, each intersection $X_i$ of $X$ with $h_i$ is a surface of degree $\le d(d-1)$. Now by Proposition \ref{surfaceinnzip} we find $$\#X_i(\QQ, B) \le c(d(d-1))^\frac{18n-26}{15n-5}B^3(\log B)^\kappa \le cd^{2\frac{18n-26}{15n-5}}B^3(\log B)^\kappa.$$ Now we are done by taking the union over all $X_i$.
    \end{proof}
\subsection{Threefolds and Fourfolds}
    The final case of Sublinear Schwartz-Zippel we can obtain using our methods is that of threefolds in $\PP^5$.

    \begin{prop}[Sublinear projective Schwartz-Zippel for 3-folds in $\PP^5$]\label{3foldsin5zip}
        Let $X$ be a projective 3-fold of degree $d$ in $\PP_\QQ^5$ and let $B \ge 1$ be given. Then one has $$\#X(\QQ,B) \le cd^\frac{209}{210}B^4(\log B)^\kappa$$ for some absolute constants $c$ and $\kappa$.
    \end{prop}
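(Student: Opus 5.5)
Following the pattern of Proposition \ref{surfaceinnzip}, I will decompose $X$ by the smallest linear space containing each component and treat each piece separately, balancing a slicing bound against a dimension-growth bound. Write $X = X_{\mathrm{low}} \cup X_3 \cup X_4 \cup X_5$. Here $X_{\mathrm{low}}$ collects the components of degree $2$ and $3$, and $X_k$ collects the remaining components that lie in a $k$-plane but not in a $(k-1)$-plane; in particular $X_3$ consists of the linear $3$-planes. It suffices to bound each piece by $cd^{\frac{209}{210}}B^4(\log B)^\kappa$ and sum.

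The cheap pieces come first. For $X_3$, Proposition \ref{planecount} gives $cd^{\frac56}B^4$. For the quadratic and cubic components, I project to a $4$-plane and invoke \cite[Theorem 2]{Heath-Brown-Ann}, respectively \cite[Theorem 0.4]{Salberger-dgc}, with a suitable $\epsilon$. This gives a bound of the form $dB^{3+\delta}$, which I then interpolate with the trivial bound $cB^6$, exactly as for $X_q$ and $X_c$ in the proof of Proposition \ref{surfaceinnzip}. Throughout, I may assume $d$ is below a suitable power of $B$, since otherwise the trivial bound $\#\PP^5(\QQ,B)\le cB^6$ already suffices.

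For $X_k$ with $k\in\{4,5\}$, I use two competing bounds.

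\emph{Slicing bound.} By Lemma \ref{planecover}, the rational points of height $\le B$ in a $k$-plane are covered by $cB^{\frac{k+1}{k}}\le cB^{\frac54}$ hyperplanes of that $k$-plane. Intersecting each component with these hyperplanes gives a surface of degree $\le cdB^{\frac54}$ containing all the relevant points. Proposition \ref{surfaceinnzip} with $n=5$ (exponent $\frac{64}{70}=\frac{32}{35}$) then gives
\[\#X_k(\QQ,B)\le c\,d^{\frac{32}{35}}B^{3+\frac54\cdot\frac{32}{35}}(\log B)^\kappa .\]

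\emph{Dimension-growth bound.} Applying Proposition \ref{3foldsinn} with $n=5$ (exponent $2\cdot\frac{32}{35}=\frac{64}{35}$) to each component and summing gives a bound linear in $d$ in the worst case. To do better, I refine as in Lemmas \ref{curvesinplanes} and \ref{surfacesin3planes}. I split the components into dyadic degree ranges $[a,2a]$. Components of degree at most $(\log B)^N$ are handled by Theorem \ref{CDDG}, costing only logarithmic factors. A range with $\le 2d/b$ components of degree $\le b$ then contributes at most $c(d/b)\,b^{\frac{64}{35}}B^3(\log B)^\kappa$.

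Alternatively, for $X_4$, the at most $2d/b$ ambient $4$-planes carry at most $c(d/b)^{\frac56}B^5$ points by Proposition \ref{planecount}. For $X_5$, where this containment is unavailable, I instead compare the dimension-growth bound with the slicing bound, in which the degree of the sliced surface is only $\sim dB^{\frac54}$.

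The main obstacle is the exponent bookkeeping. The three regimes must be played off against each other: the case split $d\gtrless B^{\theta}$ for slicing versus per-component dimension growth, the dyadic parameter $b$, and the planes-count alternative. The goal is to find thresholds under which every case yields exponent $\le\frac{209}{210}$. The denominator $210=2\cdot3\cdot5\cdot7$ suggests the optimum arises from balancing the $\frac{32}{35}$ exponents against the $\frac56$ from Proposition \ref{planecount}.

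My first attempt would be to equate $d^{\frac{32}{35}}B^{3+\frac{8}{7}}$ with the per-component bound $d^{\frac{64}{35}-1}\cdot dB^3$ restricted to large $b$. I would then solve for the threshold exponent $\theta$ and check that the resulting exponent of $d$ is $\frac{209}{210}$. If the $X_5$ piece fails to close, I would additionally slice twice, down to curves, and use Proposition \ref{curveinnzip} as a third competitor.
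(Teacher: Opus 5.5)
Your decomposition and your treatment of $X_3$ and of the quadratic and cubic components match the paper. Your $X_4$ argument is also fine, though it differs from the paper's. The dyadic bound $c(d/b)b^{64/35}B^3$ against the plane count $c(d/b)^{5/6}B^5$ gives, via $\min(P,Q)\le\sqrt{PQ}$, at most $c\,d^{11/12}b^{-1/420}B^4\le c\,d^{11/12}B^4$ up to logarithmic factors. That is a valid alternative to the paper's $d^{32/35}$, which comes from arguing as in Lemma~\ref{surfacesin3planes}.

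\textbf{The gap is in $X_5$, which is the piece that decides the exponent.} There you replace the number $cB^{\frac{k+1}{k}}$ of slicing hyperplanes by the uniform bound $cB^{5/4}$, and then the balance you propose does not close. Equating $d^{32/35}B^{3+\frac87}$ with $d^{64/35}B^3$ gives the threshold $B=d^{4/5}$ and the exponent $\frac{64}{35}-\frac45=\frac{36}{35}>1$, which is not even sublinear.

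The dyadic refinement cannot rescue this. For $X_5$ the worst case is a single irreducible component of degree $d$, and Proposition~\ref{planecount} is unavailable there. Your fallback of slicing twice down to curves is not worked out and is not automatic either. A component of $X\cap H$ may lie in a $3$-plane of $H$, so the second slice need not be a curve, and such components would need separate treatment.

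\textbf{The fix is to keep the sharper count for $k=5$.} The components of $X_5$ span $\PP^5$, so Lemma~\ref{planecover} with $(n,k)=(5,4)$ gives $cB^{6/5}$ hyperplanes. Each of them meets every component of $X_5$ in a surface, so their union is a surface of degree $\le cdB^{6/5}$. Proposition~\ref{surfaceinnzip} with $n=5$ then yields $c\,d^{32/35}B^{3+\frac{192}{175}}(\log B)^\kappa$.

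Balance this against $c\,d^{64/35}B^3(\log B)^\kappa$ from Proposition~\ref{3foldsinn}, with threshold $B=d^{5/6}$:
\begin{itemize}
\item for $B\ge d^{5/6}$ one gets $d^{\frac{64}{35}-\frac56}=d^{\frac{209}{210}}$;
\item for $B<d^{5/6}$ one gets $d^{\frac{32}{35}}B^{\frac{17}{175}}\le d^{\frac{32}{35}+\frac{17}{210}}=d^{\frac{209}{210}}$.
\end{itemize}
This is exactly the paper's argument, carried out as in Corollary~\ref{3foldsin4zip}. In particular, the $\frac56$ in $\frac{209}{210}=2\cdot\frac{32}{35}-\frac56$ is the reciprocal of the exponent $\frac65$ from Lemma~\ref{planecover}. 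It does not come from Proposition~\ref{planecount}, as you guessed.
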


    \begin{proof}
        Write $X = X_q \cup X_c \cup X_3 \cup X_4 \cup X_{5}$ where $X_q$ consists of all quadratic components, $X_c$ of all cubic components, and $X_k$ consists of all components of degree $\ne 2,3$ that lie in a $k$-plane, but not in a $k-1$-plane. Hence, $X_3$ consists of all linear components. It suffices to prove the proposition for each of these surfaces separately and sum the results.

        For each component $x$ of $X_q$ we have after projecting onto a 4-plane that by \cite[Theorem 2]{Heath-Brown-Ann}, $\#x(\QQ, B) \le c_\epsilon B^{3+\epsilon}$ for any $\epsilon >0$. By setting $\epsilon = \frac{3}{5}$ and summing over all factors we obtain $\#X_q(\QQ, B) \le c_\frac{3}{5}dB^{\frac{18}{5}}$. Combining this with the trivial bound $\#\PP^5(\QQ, B) \le cB^{6}$ yields $$\#X_q(\QQ, B) \le c'd^\frac{5}{6}B^4.$$

        For $X_c$ a similar argument can be employed, now relying on \cite[Theorem 0.4]{Salberger-dgc} with $\epsilon = \frac{3}{5}+1-\frac{2}{\sqrt{3}}$.

        For $X_3$, the result follows from Proposition \ref{planecount}. For $X_4$, an argument as in the proof of Lemma \ref{surfacesin3planes} yields the bound $\#X_4(\QQ, B) \le cd^\frac{32}{35}B^4(\log B)^\kappa$. For $X_5$, one can argue as in the proof of Corollary \ref{3foldsin4zip} to obtain the bound $\#X_5(\QQ, B) \le cd^\frac{209}{210}B^4(\log B)^\kappa$.
    \end{proof}

    We may now conlude by proving the remaining case of subquadratic dimension growth in $\PP^5$.

    \begin{prop}[Subquadratic dimension growth in $\PP^5$]\label{4foldsin5}
        Let $X$ be an irreducible projective 4-fold of degree $d>3$ in $\PP_\QQ^5$ and let $B \ge 2$ be given. Then one has $$\#X(\QQ,B) \le cd^{\frac{209}{105}}B^4 (\log B)^\kappa$$ for some absolute constants $c$ and $\kappa$.

        It follows that there exist constants $c, \kappa$ and $\epsilon(k) >0$ such that for any irreducible projective variety $X \subset \PP^5$ of dimension $k$ and any $B>2$ one has $$\#X(\QQ,B) \le cd^{2-\epsilon}B^{\dim X} (\log B)^\kappa.$$
    \end{prop}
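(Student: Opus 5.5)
The first assertion will be proved exactly as Propositions \ref{surfinn} and \ref{3foldsinn}, now feeding in Proposition \ref{3foldsin5zip}. First I dispose of large degree. If $d \ge B^{\frac{2\cdot 105}{209}}$, then
$$\#X(\QQ,B)\le \#\PP^5(\QQ,B)\le cB^6\le cd^{\frac{209}{105}}B^4,$$
since $\frac{209}{105}\cdot\frac{210}{209}\cdot\ldots$ is arranged so that $d^{209/105}\ge B^2$. So I may assume $d<B^{210/209}$. Next, if $d\le (\log B)^N$ with $N=N(5,4)$ from Proposition \ref{gendetmeth}, then Theorem \ref{CDDG} gives $cd^2B^4(\log B)^\kappa$. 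Since $d^2 \le d^{209/105}(\log B)^{N/105}$, the extra logarithm is absorbed into $\kappa$.

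In the remaining range $(\log B)^N<d<B^{E}$ with $E=210/209$, Proposition \ref{gendetmeth} supplies $c(\log B)^\kappa$ hypersurfaces $h_i$ of degree $\le d$. These jointly contain $X(\QQ,B)$ and none contains $X$. By Bézout, each $X_i=X\cap h_i$ is a 3-fold in $\PP^5$ of degree $\le d^2$. Proposition \ref{3foldsin5zip} then gives
$$\#X_i(\QQ,B)\le c\,(d^2)^{\frac{209}{210}}B^4(\log B)^\kappa=cd^{\frac{209}{105}}B^4(\log B)^\kappa.$$
Summing over the $c(\log B)^\kappa$ indices $i$ gives the first claim. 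The hypothesis $d>3$ is only needed so that Theorem \ref{CDDG} applies in the small-degree case.

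For the second assertion I collect the exponents of $d$ for each $k\le 4$ in $\PP^5$. For $k=1$, Proposition \ref{CG5b}, applied inside the linear span of the curve, gives exponent $\le \frac32$. Linear curves contribute trivially, and degree $\ge 4$ is assumed anyway. For $k=2$, Proposition \ref{surfinn} gives exponent $\frac{6\cdot4}{14}=\frac{12}{7}$. For $k=3$, Proposition \ref{3foldsinn} with $n=5$ gives exponent $2\cdot\frac{64}{70}<2$. For $k=4$, the first part gives $\frac{209}{105}<2$. Each exponent is strictly less than $2$, so some $\epsilon(k)>0$ works; alternatively one takes the minimum of the gaps. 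For $k=0$ the statement is trivial. Small degrees $d\le 3$ are absorbed into the constant via the trivial bound $cB^{k+1}$ only if $B$-exponent issues are avoided. Here I would restrict to $d\ge4$, as in Theorem \ref{summary}, or handle $d\le3$ via Lemma \ref{SZ} by enlarging $\kappa$.

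The main obstacle is only bookkeeping. One must ensure that the degree bound $d<B^E$ required in Proposition \ref{gendetmeth} is compatible with the large-degree reduction, which it is since $E$ may be chosen. One must also check that the intersections $X_i$, possibly reducible and of degree up to $d^2$, fall within the hypotheses of Proposition \ref{3foldsin5zip}, which allows arbitrary projective 3-folds.
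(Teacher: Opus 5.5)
Your proof is correct and follows the paper's argument essentially step for step. It uses the same large-degree and small-degree reductions (your threshold $B^{210/209}$ in place of the paper's $B^{6/5}$ is immaterial), then the auxiliary hypersurfaces from Proposition \ref{gendetmeth}, Bézout and Proposition \ref{3foldsin5zip}, and for the second assertion the same combination of Propositions \ref{CG5}, \ref{surfinn} and \ref{3foldsinn}. One small correction: your fallback of handling $d\le 3$ via Lemma \ref{SZ} ``by enlarging $\kappa$'' cannot work, because that lemma gives $B^{k+1}$ and a power of $B$ is not absorbed by $(\log B)^\kappa$ (indeed the second assertion fails for linear $X$, so your remark that linear curves ``contribute trivially'' is also wrong), and you must simply keep the standing assumption $d\ge 4$.
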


    \begin{proof}
        We may assume that $d < B^\frac{(n-2)(n+1)}{2n}$ since if $d \ge B^\frac{6}{5}$ we have $$\#X(\QQ, B) \le \#\PP^5(\QQ, B) \le cB^{6}\le cd^{\frac{5}{3}}B^4.$$ We may further assume $(\log B)^N<d$ with $N=N(n, 4)$ as in Proposition \ref{gendetmeth} since if $(\log B)^N \ge d$ the result follows from Theorem \ref{CDDG}. Now let $h_i$ be the hypersurfaces from Proposition \ref{gendetmeth}. By Bézout's Theorem, each intersection $X_i$ of $X$ with $h_i$ is a 3-fold of degree $\le d(d-1)$. Now by Proposition \ref{3foldsin5zip} we find $$\#X_i(\QQ, B) \le c(d(d-1))^\frac{209}{210}B^4(\log B)^\kappa \le cd^{\frac{209}{105}}B^4(\log B)^\kappa.$$ Now we are done by taking the union over all $X_i$. The second assertion now follows by combining Propositions \ref{CG5}, \ref{surfinn} and \ref{3foldsinn}.
    \end{proof}

    Now all cases of Theorem \ref{summary} have been proven in Propositions \ref{CG5}, \ref{surfinn}, \ref{3foldsinn}, \ref{4foldsin5} and Theorem \ref{CDDG}.

\bibliographystyle{amsalpha}
\bibliography{anbib}

\end{document}